\def\ps@pprintTitle{%
 \let\@oddhead\@empty
 \let\@evenhead\@empty
 \def\@oddfoot{}%
 \let\@evenfoot\@oddfoot}
 \renewcommand{\MaketitleBox}{%
  \resetTitleCounters
  \def\baselinestretch{1}%
  \begin{center}
    \def\baselinestretch{1}%
    \Large \@title \par
    \vskip 18pt
    \normalsize\elsauthors \par
    \vskip 10pt
    \footnotesize \itshape \elsaddress \par
  \end{center}
  \vskip 12pt
}
\DeclarePairedDelimiter{\norm}{\lVert}{\rVert}
\newcolumntype{C}{>{\centering\arraybackslash}p{0.3\linewidth}}
\newlist{steps}{enumerate}{1}
\setlist[steps, 1]{label = \underline{Step  \arabic*}}
\newcommand{\mb}{\mathbb}
\newcommand{\mc}{\mathcal}
\newcommand{\br}{\breve}
\newcommand{\eps}{\epsilon}
\newcommand{\mbR}{{\mathds{R}}}
\newcommand{\Id}{{\mathbb{I}}}
\newcommand{\tra }{\intercal}
\newcommand{\mfN}{{\mathfrak{N}}}
\newcommand{\mfK}{{\mathfrak{K} }}
\newcommand{\mfT}{{\mathfrak{T} }}
\newcommand{\hQ}{{\hat{Q}}}
\newcommand{\nnb}{{\nonumber}}
\newtheorem{theorem}{Theorem}
\newtheorem{proposition}[theorem]{Proposition}
\newtheorem{definition}{Definition}
\newtheorem{assumption}{Assumption}
\tikzstyle{block1} = [rectangle, draw, thick,fill=blue!10, text width=4.5cm, text centered, rounded corners, minimum height=2cm]
\tikzstyle{block2} = [rectangle, draw, thick,fill=blue!10, text width=2cm, text centered, rounded corners, minimum height=2cm]
\tikzstyle{line} = [draw, -latex']
\begin{document}

\begin{frontmatter}

%% Title, authors and addresses

%% use the tnoteref command within \title for footnotes,
%% use the tnotetext command for theassociated footnote,
%% use the fnref command within \author or \address for footnotes,
%% use the fntext command for theassociated footnote,
%% use the corref command within \author for corresponding author footnotes,
%% use the cortext command for theassociated footnote,
%% use the ead command for the email address,
%% and the form \ead[url] for the home page:
%% \title{Title\tnoteref{label1}}
%% \tnotetext[label1]{}
%% \author{Name\corref{cor1}\fnref{label2}}
%% \ead{email address}
%% \ead[url]{home page}
%% \fntext[label2]{}
%% \cortext[cor1]{}
%% \address{Address\fnref{label3}}
%% \fntext[label3]{}

\title{%Risk-Sensitive Mean Field Games and Interbank Markets with Common Shocks\\
%Risk-Sensitive Mean Field Games with Common Noise and Implications for Interbank Markets\\
%Risk Sensitive Mean Field Games with Common Noise: A Study with Applications to Interbank Markets\\
Risk-Sensitive Mean Field Games with Common Noise:  A Theoretical Study with Applications to Interbank Markets}
%%% other possible titles%%%
%Risk-Sensitive Mean Field Games and Interbank Transactions with Common Shocks
% Risk-Sensitive Mean Field Games with Common Shocks: Analyzing Financial Market Dynamics and Interbank Risk Assessment
%% use optional labels to link authors explicitly to addresses:
\author[1]{Xin Yue Ren}
\author[1]{Dena Firoozi}

%\author{Dena Firoozi*, Sebastian Jaimungal*, Peter E. Caines**}
%\author{Dena Firoozi \thanks{Department of Statistical Sciences, University of Toronto, Toronto, ON, Canada
%  (\email{dena.firoozi@utoronto.ca}, \email{sebastian.jaimungal@utoronto.ca}).}

\address[1]{Department of Decision Sciences, HEC Montréal, Montréal, QC \\
email: (xin-yue.ren@hec.ca, dena.firoozi@hec.ca)}
%\address[]{affiliation of author 2}

\begin{abstract}
%% Text of abstract
%Understanding the actions of market participants with varying risk sensitivity degree and their responses to common macroeconomic shocks is essential for informed decision-making and accurate risk assessment in financial markets. 
In this paper, we address linear-quadratic-Gaussian (LQG) risk-sensitive mean field games (MFGs) with common noise. In this framework agents are exposed to a common noise and aim to minimize an exponential cost functional that reflects their risk sensitivity. We leverage the convex analysis method to derive the optimal strategies of agents in the limit as the number of agents goes to infinity. These strategies yield a Nash equilibrium for the limiting model. 

The model is then applied to interbank markets, focusing on optimizing lending and borrowing activities to assess systemic and individual bank risks when reserves drop below a critical threshold. We employ Fokker-Planck equations and the first hitting time method to formulate the overall probability of a bank or market default. We observe that the risk-averse behavior of agents reduces the probability of individual defaults and systemic risk, enhancing the resilience of the financial system. Adopting a similar approach based on stochastic Fokker-Planck equations, we further expand our analysis to investigate the conditional probabilities of individual default under specific trajectories of the common market shock. 
\end{abstract}

%%Graphicalmarket noise abstract
%\begin{graphicalabstract}
%\includegraphics{grabs}
%\end{graphicalabstrgiven the filtration of the act}

%%Research highlights
%\begin{highlights}
%\item Research highlight 1
%\item Research highlight 2
%\end{highlights}

\begin{keyword}
Risk-sensitivity, mean-field games, common noise, exponential utility, LQG systems, interbank market, default probability, systemic risk, Fokker-Planck equation.
%% keywords here, in the form: keyword \sep keyword

%% PACS codes here, in the form: \PACS code \sep code

%% MSC codes here, in the form: \MSC code \sep code
%% or \MSC[2008] code \sep code (2000 is the default)

\end{keyword}

\end{frontmatter}

%\linenumbers
\section{Introduction}
\subsection{Literature Review} 
In stochastic games, multiple agents search for maximizing the profit or minimizing the cost while competing continuously with others. However, the complexity of the problem increases when the number of agents is large. In fact, each agent's stochastic optimal control problem becomes mathematically intractable in this case. As a solution for this issue, for such large-population games, mean field game (MFG) theory is used to approximate the solution to the high dimensional optimization problem each agent faces. MFG theory was introduced in a series of works \citep{ Lasry2006a, Lasry2006b, Huang2006, Lions2007, Huang2007} in the early 21st century. In MFG games, when there is an infinite number of agents, a Nash equilibrium is reached when one agent takes the best-response action to the environment where the mass behavior of others is modelled by the mean field distribution \citep{Huang2006}. \citet{Lions2007} and \citet{Huang2006} reduce the optimal control problem of a representative agent to a set of coupled forward-backward partial differential equations, where the forward component is the Fokker-Planck-Kolmogorov equation generating the mean field distribution and the backward component is the Hamilton-Jacobi-Bellman equation generating the value function of the agent. The authors subsequently discuss the existence and uniqueness of the solutions within this context. When it comes to a finite number of agents, an approximate Nash equilibrium, called $\epsilon$-Nash equilibrium, can be reached by employing the limiting strategies obtained. In other words, one agent can profit at most by $\epsilon$ by unilaterally deviating its strategy from others' \citep{Huang2006, Huang2007}.

Linear quadratic Gaussian (LQG) MFGs involve agents with linear dynamics in its own state and control action and the mean field as well as an additive noisemodelled by a Brownian motion. In addition, the cost functional to optimize is a quadratic function of these processes \citep{Huang2007}. For this type of MFGs, explicit solutions can be obtained which is very convenient in the context of applications.

In the classical setup of MFGs, there are a large number of agents where each has an asymptotically negligible influence on the system as the number of agents grows to infinity. However, in applications, there are usually a few agents which are not asymptotically negligible. \citet{Huang2010} considers LQG games with a major agent and a large number of minor agents to address such situations in practice. The behavior (dynamics and cost functionals) of individually negligible minor agents and the influential major agent contribute collectively to the mean field formation. \citet{Nourian2013} presents nonlinear MFGs with a major agent and a large number of minor agents. In this case, as the major agent's state or control action induces random fluctuation in the mean field distribution, the authors decompose the MFG problem into a non-standard stochastic optimal control problem with random coefficient for a representative minor agent and a stochastic coefficient McKean-Vlasov optimal control problem for the major agent. Other works in this area include \citep{Nguyen2012LinearQuadraticGaussianMixedGames,Carmona2016probabilisticapproachmean,Carmona2017AlternativeApproachMean,Carmona2016FiniteStateMean,sen_mean_2016,firoozi_epsilon-nash_2021,firoozi_class_2022,LASRY2018886,Bensoussan2017,MOON2018200, DenaConvex, HuangCIS2020, firoozi_lqg_2022}.

%Robust MFG?

    In the context of applications, it is natural to consider a common random shock to all agents, especially when the game happens within the same environment for all agents.  A common Brownian motion may be added to the agent's dynamics to model such shocks. \citet{Carmona2014} presents the MFGs where the agents' dynamics include a common Brownian motion. The authors prove the existence of a weak MFG solution under general assumptions. With additional convexity assumptions, the existence of solutions is established without relaxed or externally randomized controls. Moreover, under the monotonicity condition of Lasry and Lions \citep{Lions2007}, the authors prove the existence and uniqueness of the solutions in a strong sense as the consequences of their pathwise uniqueness. \citet{Carmona2018II} develops the solutions for such games and extend the subject to the games with major and minor players as well as the games of timing. \citet{Caines2017} elaborates on two approaches to MFGs with common noise. The first one is an extension of the master equation formulation to the MFG problems. The second one is to treat the common noise as the dynamics of an uncontrolled major agent that embeds in each agent's dynamics.

When solving mean field games in the risk-neutral case, only the first moment of the integral cost is included in the cost functional of the agent. On the contrary, in the risk-sensitive case, an exponential function of integral cost is considered. In other words,  all moments of the integral cost, including the second moment which is a risk measure for the agent, are considered. Thus, the risk-sensitive behavior of the agent is captured \citep{Moon2017, Moon2019}.
\citet{Moon2017} solves a multi-agent linear-quadratic game with the exponential cost functional. The authors first solve a generic risk-sensitive optimal problem and then characterize an approximated mass behavior effect on one agent via the fixed-point analysis of the mean field system. They show that the approximated mass behavior is the best estimate of the actual one as the population size goes to infinity.
\citet{Caines2017} mentions the use of dynamic programming for such problems with exponential integral cost. In \citet{Moon2019} stochastic maximum principle is used to address nonlinear risk-sensitive mean field games. The authors analyze the optimal control problem under a fixed probability measure. Then, via Schauder's theorem they obtain the conditions under which a fixed-point solution exists to the consistency equation which equates the probability law of the optimally controlled state of the representative agent to the fixed measure. \citet{Tembine2012} elaborates on the fact that the mean field value derived using this method coincides with the value function from Hamilton-Jacobi-Bellman equation with an additional quadratic term under appropriate regularity conditions. 

%Mean Field type control (An agent controls a distribution of states and chooses a strategy. We are interested in a multi-agent context compared to the MFG in which we are interested in one agent only.)
%  \citep{Bensoussan2013} presents additional approaches such as Hamilton-Jacobi-Bellman Fokker-Planck approach and time inconsistency approach to solve the mean field type control problems. In the second approach, the authors analyze a special case of a mean field type control problem in which the mean field state is deterministic. In the end, the authors obtain time-consistent optimal feedback (i.e. it does not depend on the initial condition). \xin{game vs control}

%What the paper is about (concepts)
MFGs have found diverse applications in mathematical finance. \citet{FirooziISDG2017, casgrain2018mean,Cardaliaguet2018,Huang2019} use MFGs in a dynamic trading context where the goal of each agent is to maximize the expected wealth and close the position at the end. The authors express the solution of the game explicitly in terms of a deterministic fixed point problem and discuss $\epsilon$-Nash equilibrium when considering a finite population. \citet{Carmona2017} incorporates the mean field game of timing into a bank runs' context. The authors model the value of the deposits with dividends of agents in relation to the moment at which the agents exit the game satisfactorily in continuous time. Through a probabilistic approach, the fixed point for best responses is found using the Nash equilibrium. \citet{CarmonaSysRisk2015,CarmonaSysRisk2018} uses LQG MFGs to model an inter-bank borrowing and lending system in which each agent's dynamic represents the log-monetary reserves of one bank. In addition, the authors investigate the individual and systemic risk by defining a default threshold for each bank. Then, the Nash equilibrium is established following the banks' optimization of monetary reserve adjustments. Other applications include equilibrium pricing in financial markets \citep{gomes2020mean, shrivats2020mean, feron2021price, fujii2020mean}, portfolio trading \citep{Horst2018}, financial market design \citep{shrivats2022mean}, and cryptocurrencies \citep{li2019mean}. Specifically, we 

\subsection{Problem Description}
This paper studies LQG MFGs with a common noise, where agents have an exponential cost to capture risk sensitivity. The methodology developed is mainly inspired by \citet{DenaConvex} and \citet{liu2023}, where the authors develop a convex analysis method to address, respectively, risk-neutral and risk-sensitive LQG optimal control problems and then extend the analysis to MFGs with major and minor agents. We leverage the convex analysis method to derive the best-response strategies of risk-sensitive agents when exposed to a common noise. These strategies yield a Nash equilibrium for the liming model when the number of agents goes to infinity.

%consisting of a simple linear stochastic dynamics and an exponential cost where the exponent is a quadratic function of the state. Without using the solution methods of the Hamilton-Jacobi-Bellman equation, stochastic maximum principle or convex analysis, the author solves the problem with the use of Radon-Nikodym derivatives and the completion of squares method to deal with the cost functional being exponential.
%The problem analyzed in this paper consists of solving linear-quadratic-Gaussian mean field games with common noise, where agents have an exponential cost to capture the risk sensitivity. 
%The methodology used to address this problem is inspired mainly by \citet{liu2023}, where the authors address linear-quadratic-Gaussian risk-sensitive optimal control problems through a variational analysis which incorporates a change of measure. The authors extend the single agent problem to MFGs with major and minor agents.

Then, the model is used to investigate the impact of risk sensitivity on individual default and systemic default probabilities in the context of an interbank market where individual banks seek to pursue optimal strategies to reduce costs.  For this purpose, first, risk-sensitivity is incorporated in the market model introduced by \citet{Carmona2013} through an exponential cost functional. Then, to calculate the default probability of an individual agent and of the system, Fokker-Planck equations are formulated based on \citet{wille2004new} via the hitting time approach for diffusion processes. Then, the equations are numerically solved and the impact of various parameters, in particular risk-sensitivity, is examined. Finally, the default probabilities subject to specific trajectories of the common shock in the market are examined through a stochastic Fokker-Planck equation, drawing inspiration from \citet{Carmona2013}. The conditional default probability is numerically computed and the impact of distinct trajectories of the common noises is investigated.

The contributions of the paper are summarized as follows:
\begin{itemize}

    \item The paper uses convex analysis to address linear-quadratic-Gaussian (LQG) risk-sensitive mean field games (MFGs) with common noise. More specifically, this model introduces exponential cost and a common Brownian motion, shedding light on risk-sensitive behavior in the context of MFGs, where all agents are influenced by a shared noise. Optimal feedback control actions for agents leading to a Nash equilibrium are derived.
    
    \item Within the context of interbank transactions, the paper makes contributions by (i) introducing risk sensitivity, (ii) utilizing the Fokker-Planck equation to derive the total probabilities of individual default as well as systemic default, and (iii) investigating the conditional probability of individual default under specific trajectories of the common shocks using stochastic Fokker-Planck equation. 
    
\end{itemize}

\subsection{Paper Organization}
The paper is organized as follows. Firstly, a model of Linear-Quadratic-Gaussian (LQG) risk-sensitive mean field games is presented \ref{sec: Finite Population Model}, which incorporates a common Brownian motion with exponential cost. Next, optimal feedback control actions for agents leading to a Nash equilibrium are derived for the infinite-population scenario, where the number of agents goes to infinity in Section in  \ref{sec: Infinite Population Model}. Then, the paper demonstrates the application of this model in an interbank market in Section \ref{sec: Application}. %The system of control coefficients is solved numerically, followed by an analytical solution of the distribution of the agent's and mean field dynamics under optimal control. Additionally, a pure analytical solution for a simple case is presented that maps control coefficients to the distribution. 
In Section \ref{sec: Individual Default and Systemic Risk} and Section \ref{numerical methods}, the probability of default for the agent and the system is investigated by deriving corresponding Fokker-Planck equations and numerically solving them. The impact of various parameters, in particular risk-sensitivity on these probabilities is examined in Section \ref{SIDP}. A more thorough investigation is conducted to study the conditional probability of individual default using numerical methods over the stochastic Fokker-Planck equation, focusing on specific trajectories of common shock in Section \ref{CDF}. %Appendix A includes an overview of convex analysis for static optimization problems. Appendix B presents the distribution of both the bank's log-reserve and the market state within the acquired market equilibrium as delineated by transaction strategies in the infinite-population model.

%%%%%%%%%%%%%%%%%

\section{Finite-Population Model}\label{sec: Finite Population Model}
In this section we present a general model for linear-quadratic-Gaussian (LQG) risk-sensitive mean field game (MFG) systems with a finite number of agents impacted by a common noise.

Consider a system consisting of $N$ competitive dynamic agents. We assume that agents are heterogeneous and belong to $K$ distinct types, where each type is characterized by a specific set of model parameters. The index set of agents is defined by $\mfN = \{1,2,\dots,N\}$. Moreover, 
the index set $\mc{I}_k$ of type $k, k \in \mfK=\{1,2,\dots,K\},$ is defined as 
\[\mc{I}_k = \lbrace i : \theta_i = \theta^{(k)},~ i \in \mfN \rbrace,\] %\thinspace \break \text{such that} \thinspace \break |\mc{I}_k|=N_k\]
where $\theta_i$ and $\theta^{(k)}\in\Theta$ denote, respectively, the model parameters of agent $i$ and type $k$, with $\Theta$ being the system parameter set. The cardinality $|\mc{I}_k|$ of the index set ${I}_k$ determines the number of agents in type $k$, denoted by $N_k$ henceforth. The proportion of the agents that belong to type $ k \in \mfK$, is defined by $\pi_k^{[N]}= \frac{N_k}{N}$. Thus, for the entire population, we obtain the vector of proportions $ \pi^{[N]} = [\pi_1^{[N]} \hspace{0.2cm}  \pi_2^{[N]} \hspace{0.2cm}...\hspace{0.2cm} \pi_K^{[N]}]$, which represents the empirical distribution of system parameters. 

\subsection{Dynamics}
%\dena{filtrations cannot be defined before processes are defiend!} 
Agent $i, i \in \mfN$, is governed by linear dynamics given by 
\begin{equation}\label{finite_dx_t}
    dx_t^i=\left(A_kx_t^i+F_k x_t^{[N]}+B_ku_t^i+b_k(t)\right)dt+\sigma_kdw_t^i+\sigma_0dw_t^0
\end{equation}
where $t \in \mfT \coloneqq [0, T]$ and $k\in \mfK$. We denote respectively $x_t^i \in \mathbb{R}^n$ and $u_t^i \in \mathbb{R}^m$ as the state and the control action of agent $i$ at time $t$. We define $ w \coloneqq \{(w^0_t)_{t\in \mfT}, (w^i_t)_{t\in \mfT}, i\in \mfN\}$ as a set of $(N+1)$ independent Brownian motions, where $ w^i_t \in \mathbb{R}^r$ denotes the idiosyncratic noise of agent $i$ at time $t$ and $w^0_t \in \mathbb{R}^r$ denotes a common noise that impacts the dynamics of all $N$ agents at time $t$. The latter models a random shock in the system. The coefficients $A_k \in \mathbb{R}^{n \times n}$, $F_k \in \mathbb{R}^{n \times n}$, $B_k \in \mathbb{R}^{n \times m}$, $\sigma_k, \sigma_0 \in \mathbb{R}^{n \times r}$, and the function $b_k(t) \in \mathbb{R}^n$ are deterministic and known. 

Moreover, $x_t^{[N]}\in\mathbb{R}^n$ defines the average state of the entire population of agents at time $t$ and is given by
\begin{equation}
    x_t^{[N]}=\frac{1}{N}\sum_{i=1}^N x_t^i.
\end{equation}
From \eqref{finite_dx_t}, each agent in the system is impacted by the average state of the entire population. 

\subsection{Filtration and Control $\sigma$-Fields}\label{Filtration and Control}
Let $(\Omega, \pmb{F}, (\mathcal{F}^{[N]}_t)_{t\in \mfT}, \mathbb{P})$ be a filtered probability space, where $\Omega$ is the sample space, $\pmb{F}$ is a $\sigma$-algebra, $(\mathcal{F}^{[N]}_t)_{t\in \mfT}$ is a filtration, and $\mathbb{P}$ is a probability measure. We define the $\sigma$-algebra $\mathcal{F}_t^{[N]}  \coloneqq \sigma(w^0_s, w^i_s, 0 \leq s \leq t, i \in \mfN)$. The admissible set of controls $\mc{U}^i$ of an agent $i$ is the set of continuous linear state feedback $\mbR^m$-valued control laws $u_t^i = u(t,x_t^i), {t\in\mfT},$ that are $\mc{F}_t^{[N]}$-adapted such that $\mb{E}[\int_0^T (u_t^i)^\tra u_t^i dt] < \infty$, for $T < \infty$.

\begin{assumption}\label{$x_0$ finite population}
    The initial states $\{x_0^i, {i \in \mfN}\}$, defined on $(\Omega, \pmb{F}, (\mathcal{F}_t^{[N]})_{t\in \mfT}, \mathbb{P})$, are identically distributed,  mutually independent and also independent of $w$.
\end{assumption}

\subsection{Cost Functional}
Let $\mathbb{S}^{n \times m}$ denote the set of symmetric matrices of dimension $n \times m$, and let $\Vert a\Vert_B^2=a^\top Ba$ denote the seminorm of vector $a$ with respect to $B \geq 0$. Additionally, we define $u^{-i}\coloneqq (u^0,\dots,u^{i-1},u^{i+1},\dots,u^N)$ to represent the control actions performed by agents other than agent $i$.

The cost functional of agent $i$ to be minimized is given by 
\begin{equation}\label{cost integral_f}
  J^{i,[N]}(u^i, u^{-i}) = \gamma_k\log\mb{E} \left[\exp\left(\frac{1}{\gamma_k}\left(g^k(x_T^i,x_T^{[N]})+\int_0^Tf^k(x^i,x_t^{[N]},u_t^i)dt\right)\right)\right]
\end{equation} where 
\begin{equation}
    g^k(x_T^i,x_T^{[N]})=\frac{1}{2}\Vert x_T^i-H_kx_T^{[N]}-\eta_k\Vert_{\hQ_k}^2
\end{equation}
\begin{equation}
    f^k(x^i_t,x_t^{[N]},u_t^i)=\frac{1}{2} \left\{ \Vert
    x_t^i-H_kx_t^{[N]}-\eta_k\Vert_{Q_k}^2+2(x_t^i-H_kx_t^{[N]}-\eta_k)^\tra  S_k u_t^i+\Vert u_t^i\Vert_{R_k}^2 \right\}
\end{equation}
with $\tfrac{1}{\gamma_k} \in (0, \infty)$ indicating the degree of risk aversion of the agent. In particular, as $\tfrac{1}{\gamma}$ increases, the agent's risk aversion intensifies. In the limit when $\tfrac{1}{\gamma} \rightarrow 0$, the cost functional reduces into a risk-neutral form. The other parameters are $\hQ_k, Q_k \in \mathbb{S}^{n \times n}$, $R_k \in \mathbb{S}^{m \times m}$,  $H_k \in \mathbb{R}^{n \times n}$, $\eta_k \in \mathbb{R}^{n}$, $S_k \in \mathbb{R}^{n \times m}$ for all $ k \in \mfK$. 

The cost functional is defined as the expected value of an exponential function of the integral cost, enabling it to capture all moments of the integral cost, including those that indicate risk. As a result, the cost functional incorporates risk, making it a risk-sensitive cost. 

For a representative agent, the optimization problem involves finding the optimal control $u_t^i$ that minimizes the cost functional while taking into account the agent's dynamics and its interactions with all other agents modeled by the average state. However, as the number of agents $N$ increases, the complexity of this problem escalates, rendering it intractable. Mean Field Game (MFG) theory provides a mathematically tractable approach to analyze such interactions among a large number of agents. The MFG methodology involves finding the solution to the asymptotic game as the number of agents approaches infinity. In this limiting case, the average state of the population, known as the mean field, can be mathematically characterized. As each agent can compute the mean field, the problem becomes significantly simplified and can be represented as a set of individual optimal control problems linked together through the mean field. In the next section, we present the optimization problem in the limiting case referred to as the infinite-population model.

\section{Infinite-Population Model}\label{sec: Infinite Population Model}
In this section we present the infinite-population model, as $N \rightarrow \infty$, for the linear-quadratic-Gaussian (LQG) risk-sensitive mean field games described in the preceding section. The model consists of an infinite number of competitive dynamic agents that belong to $K < \infty$ distinct types, each with a unique set of model parameters. Stated differently, we are considering the limiting case where each type is comprised of an infinite population. The index set of agents is denoted by $\mfN ^ \infty =\{1,2,\dots\}$.
\begin{assumption}\label{Porportion of the MF}
    The empirical distribution of model parameters converges to a theoretical distribution. In other words, there exists $\pi_k $ such that $\lim_{N \rightarrow \infty} \pi_k^{[N]}\coloneqq\lim_{N \rightarrow \infty} \frac{N_k}{N}=\pi_k$ for all $k  \in \mfK$. Thus, $\lim_{N \rightarrow \infty}\pi^{[N]}  =\pi$, where $\pi = [\pi_1, \dots,\pi_K ]$.
\end{assumption}
\subsection{Dynamics}\label{MF dyn}
From the dynamics \eqref{finite_dx_t}, we consider the limit case of the empirical average for an infinite population case and acknowledge the convergence criterion imposed in Assumption \ref{Porportion of the MF}. Then, agent $i, i \in \mfN^\infty$, in the infinite-population limit is governed by linear dynamics given by 
\begin{equation}\label{dx_t}
    dx_t^i=\left(A_kx_t^i+F_k^\pi\bar{x}_t+B_ku_t^i+b_k(t)\right)dt+\sigma_kdw_t^i+\sigma_0dw_t^0
\end{equation}
where $F_k^\pi  \in \mathbb{R}^{n \times Kn}$ and $\bar{x}_t \in \mathbb{R}^{Kn}$. We define $F_k^\pi=\pi \otimes F_k \coloneqq [F_k\pi_1 \hspace{0.2cm}  F_k\pi_2 \hspace{0.2cm}...\hspace{0.2cm} F_k\pi_K]$. In LQG case, the mean field can be written as $\bar{x}_t^\tra =\begin{bmatrix} (\bar{x}_t^1)^\tra & \dots & (\bar{x}_t^K)^\tra \end{bmatrix}$ which denotes the population mean field at time $t$, where $\bar{x}_t^k \in \mathbb{R}^n$ is defined as 
  \begin{align}\label{MFk def}
 \bar{x}_t^k =\lim_{N_k\rightarrow \infty} \frac{1}{N_k}\sum_{i \in \mc{I}_k} x_t^i 
 \end{align}
representing the mean field of type $k$ at time $t$. The mean field dynamics is derived in Section \ref{sub: Mean Field Dynamics}. All other continuous states and coefficients maintain their definitions from the finite population model. The assumption on the starting states remains also the same but in the filtered probability space defined in Section \ref{Probability Space and Control sigma-Fields}.

\subsection{Filtration} \label{Probability Space and Control sigma-Fields}

 We define the filtration for agent $i$ as $(\mathcal{F}_t^{i})_{t\in \mfT}  \coloneqq \sigma({w^0_s, w^i_s, 0 \leq s \leq t})$ for all $i \in \mfN^\infty$ and the filtration for the mean field as $(\mathcal{F}_t^{0})_{t\in \mfT}  \coloneqq \sigma({w^0_s, 0 \leq s \leq t})$. The admissible set of controls $\mc{U}^i$ for an agent $i$ is the set of continuous linear state feedback control laws $u_t^i = u(t,x_t^i), {t\in\mfT},$ that are $\mc{F}_t^{i}$-adapted $\mbR^m$-valued processes such that $\mb{E}[\int_0^T (u_t^i)^\tra u_t^i dt] < \infty$, for $T < \infty$.

\subsection{Cost Functional}
 The cost functional to be minimized is given by
\begin{equation}\label{cost integral_inf}
  J^{i,\infty}(u^i) = \gamma_k\log\mb{E} \left[\exp\left(\frac{1}{\gamma_k}\left(g^k(x_T^i,\bar{x}_T)+\int_0^Tf^k(x^i,\bar{x}_t,u_t^i)dt\right)\right)\right]
\end{equation} where 
\begin{equation}
    g^k(x_T^i,\bar{x}_T)=\frac{1}{2}\Vert x_T^i-H_k^{\pi}\bar{x}_T-\eta_k\Vert_{\hQ_k}^2
\end{equation}
\begin{equation}\label{cost_comp_inf}
    f^k(x^i_t,\bar{x}_t,u_t^i)=\frac{1}{2} \left\{ \Vert
    x_t^i-H_k^{\pi}\bar{x}_t-\eta_k\Vert_{Q_k}^2+2(x_t^i-H_k^{\pi}\bar{x}_t-\eta_k)^\tra  S_k u_t^i+\Vert u_t^i\Vert_{R_k}^2 \right\}
\end{equation}
with $ H_k^\pi \in \mathbb{R}^{n \times Kn}$ defined as $ H_k^\pi =\pi \otimes H_k = [H_k\pi_1 \hspace{0.2cm}  H_k\pi_2 \hspace{0.2cm}...\hspace{0.2cm} H_k\pi_K]$. The other parameters are the same as the ones in the finite-population model.  
\begin{assumption}\label{ass:ConvCond}
$ \hat{Q}_k\geq0,\, R_k>0,\, Q_k-S_kR_k^{-1}S_k^\tra \geq 0 $.
\end{assumption}
Assumption \ref{ass:ConvCond} ensures the convexity of the cost functional \eqref{cost integral_inf} with respect to $x_t^i$ and $u_t^i$. By completing the square, we obtain the following equality
\begin{align}
    f^k(x^i_t,\bar{x}_t,u_t^i) &=\frac{1}{2} \left\{ \Vert
    x_t^i-H_k^{\pi}\bar{x}_t-\eta_k\Vert_{Q_k}^2+2(x_t^i-H_k^{\pi}\bar{x}_t-\eta_k)^\tra  S_k u_t^i+\Vert u_t^i\Vert_{R_k}^2 \right\}\nnb\\&=\frac{1}{2} \bigl\{ \Vert
    x_t^i-H_k^{\pi}\bar{x}_t-\eta_k\Vert_{Q_k}^2+2(x_t^i-H_k^{\pi}\bar{x}_t-\eta_k)^\tra  S_k u_t^i-\Vert S_k^\tra (x_t^i-H_k^{\pi}\bar{x}_t-\eta_k)\Vert_{R_k^{-1}} \nnb\\&\text{\hspace{4mm}} +\Vert S_k^\tra (x_t^i-H_k^{\pi}\bar{x}_t-\eta_k)\Vert_{R_k^{-1}}+\Vert u_t^i\Vert_{R_k}^2 \bigr\}\nnb
    \\&=\frac{1}{2} \bigl\{ \Vert
    x_t^i-H_k^{\pi}\bar{x}_t-\eta_k\Vert_{Q_k-S_k R_k^{-1}S_k^\tra}^2 \nnb\\&\text{\hspace{4mm}} +\Vert u_t^i+R_k^{-1} S_k(x_t^i-H_k^{\pi}\bar{x}_t-\eta_k)\Vert_{R_k}^2 \bigr\}.
\end{align}
Consider $g^k(x_T^i,\bar{x}_T)$ and $f^k(x^i_t,\bar{x}_t,u_t^i)$, we refer to \citet{Jacobson1973} for the conditions in Assumption \ref{ass:ConvCond} that guarantee the convexity of the cost functional \eqref{cost integral_inf} with respect of $x_t^i$ and $u_t^i$.
\\ \\
Coefficients $A_k, F_k^\pi, B_k$ and $\sigma_k$ in the agent's dynamics can be viewed as type-specific factors with respect to the associated variable. The function $b_k(t)$ is an additional deterministic function with the dynamics' drift. The factor $\sigma_0$ is a multiplier to the common noise presented in the environment in which all agents inhabit. There are numerous potential financial applications linked to these variables. For instance, the state $x_t^i$ can be interpreted as the portfolio value, market price of inventory, or monetary reserves of a fund. The corresponding feedback control $u_t^i$ can be regarded as the trade or transaction rate.

The cost functional that the agent wants to minimize can be viewed as a regulator's imposition or the agent's preference or cost. In this model, parts of the cost functional include the distance of the agent's state to a factor of the mean field up to a constant $\eta_k$. From equation \eqref{cost integral_inf}, the impact of the agent's control action on the cost functional is also present.

A thorough interpretation of the parameters will be presented in application sections within the interbank context.

\subsection{Mean Field Dynamics}\label{sub: Mean Field Dynamics}
The mean field dynamics for the agents of the type $k$ is derived from the definition provided in equation \eqref{MFk def}. An equivalent representation in the infinite-population limit can be written in the conditional expectation form $\bar{x}_t^k = \mb{E}[x_t^{i,k} | \mathcal{F}_t^{0}]$, where $x^{i,k}_t$ represents the state of a representative agent of the type $k$ \citep{Carmona2018II}. The mean field dynamics is then derived as
\begin{align}\label{dx_t^k}
    d\bar{x}_t^k =\left(A_k\bar{x}_t^k +F_k^\pi\bar{x}_t+B_k\bar{u}_t^k+b_k(t)\right)dt+\sigma_0dw_t^0
\end{align}
where $\bar{u}_t^k \in \mathbb{R}^m$ is defined by  \[\bar{u}_t^k = \lim_{N_k\rightarrow \infty} \frac{1}{N_k}\sum_{i \in I_k} u_t^i.\] 
If the limit exists, $\bar{u}_t^k$ represents the control mean field of agents of type $k \in \mc{K}$.\\
Note that as $N_k$ increases to infinity for all types of agent, by the strong Law of Large Numbers,
\begin{equation}
   \lim_{N_k\rightarrow \infty}\frac{1}{N_k}\sum_{i \in \mc{I}_k} \int dw_t^i=0
\end{equation}
or equivalently $\mb{E}[\int dw_t^i | \mathcal{F}_t^{0}] = 0$.

Subsequently, the state mean field of the population $\bar{x}_t \in \mathbb{R}^{Kn}$ can be represented as the vector $\bar{x}_t^\tra =\begin{bmatrix}(\bar{x}_t^1)^\tra & \dots & (\bar{x}_t^K)^\tra   
\end{bmatrix}$ satisfying
\begin{equation}\label{MF_dyn_int}
d\bar{x}_t = (\br{A}\bar{x}_t  + \br{B} \bar{u}_t  + \br{m}_t)dt + \pmb{1}_{Kn \times n} \sigma_0 dw^0_t
\end{equation}
where $\bar{u}_t \in \mathbb{R}^{Km}$ represents the population control mean field  $ \bar{u}_t^\tra = \begin{bmatrix}
(\bar{u}_t^1)^\tra &
\dots &
(\bar{u}_t^K)^\tra
\end{bmatrix}$.
The associated coefficients $\br{A}_t\in \mathbb{R}^{Kn \times Kn}, \br{B} \in \mathbb{R}^{Kn \times Km}$, $\br{m}_t \in \mathbb{R}^{Kn \times 1}$, and $\pmb{1}_{Kn \times n} \in \mathbb{R}^{Kn \times n}$ are defined as in 
\begin{equation}\label{MFmatrices}
\br{A} = \begin{bmatrix}
A_1\pmb{e}_1+ F^{\pi}_1\\
\vdots \\
A_K\pmb{e}_K+F^{\pi}_K
\end{bmatrix}, \quad
\br{B} = \begin{bmatrix}
B_1 & &0\\
        & \ddots &\\
       0 & & B_K
\end{bmatrix}, \quad
\br{m}_t = \begin{bmatrix}
b_1(t)\\
\vdots\\
b_K(t)
\end{bmatrix}, \quad
\pmb{1}_{Kn \times n}=\begin{bmatrix}
\Id_n\\
\vdots \\
 \Id_n
 \end{bmatrix}.
\end{equation}
 Moreover, the matrix $\pmb{e}_k \in \mathbb{R}^{n \times Kn}$ is defined as $\pmb{e}_k = [0_{n \times n}, ..., 0_{n \times n}, \Id_n, 0_{n \times n}, ..., 0_{n \times n}]$, which has the $n \times n$ identity matrix $\Id_n$ at the $k$th block. 
 
\section{Solutions to the Infinite-Population Model}\label{sec: sols to inf}
\subsection{Optimal Control Action}\label{sec: Optimal Control Action}
Consider the infinite-population LQG risk-sensitive MFG model with common noise presented in Section \ref{sec: Infinite Population Model}, our objective is to determine the optimal control actions that achieve the best response using convex analysis. To implement this approach, we adapt the definition of the Gâteaux derivative described in \citet{ConvecAnalysisBook1999} and \citet{Allaire2007} to our specific problem. By using this modified definition, we can identify the control action that leads to the vanishing of the Gâteaux derivative of the cost function. Then, given the exponential nature of the cost integral, we use completion of squares and Girsanov's theorem to change the measure and determine the optimal control action.

\begin{definition}[G\^ateaux Derivative] \label{ddef:GateauxDerivative}
The cost functional $J^{i, \infty}$ defined on a neighbourhood of $u^i \in \mc{U}^i$ with values in $\mathbb{R}$ is Gâteaux differentiable at $u^i$ in the direction of $\omega^i \in \mc{U}^i$ if there exists a Gâteaux differential $ DJ(u^i)$ such that
\begin{equation}\label{def-Gateaux}
    \langle DJ(u^i), \omega^i \rangle = \lim_{\epsilon \to 0} \frac{J(u^i+\epsilon\omega^i)-J(u^i)}{\epsilon}.
\end{equation}
\end{definition}

\begin{theorem}[Gâteaux Derivative Expanded] \label{Theorem of Gâteaux Derivative Expanded}
The Gâteaux derivative of the cost functional \eqref{cost integral_inf} in the infinite population case is given by
\begin{equation}\label{Gâteaux derivative}
    \langle DJ^{i, \infty}(u^i), \omega^i \rangle =
    \frac{\mb{E}\left[\int_0^T (\omega_t^i)^\tra h^k(\epsilon,x^i_t,\bar{x}_t,u_t^i)dt\right]}{\mb{E}\left[\exp(G_T^i(u))\right]}
\end{equation}
where
\begin{align}
    &G_T^i(u)=\frac{1}{\gamma_k}\left[g^k(x_T^i,\bar{x}_T)+\int_0^T f^k(x^i_t,\bar{x}_t,u_t^i)dt\right]\\
    &h^k(\epsilon,x^i_t,\bar{x}_t,u_t^i)=M^i_{1,t}\Big(S_k^\tra (x_t^i-H_k^{\pi}\bar{x}_t-\eta_k)+R_k u_t^i- B_k^\tra \int_0^t \exp{\left(A_k^\tra(s-t)\right)} \nnb\\&\text{\hspace{3cm}}\times\bigl( Q_k (x_s^i-H_k^{\pi}\bar{x}_s-\eta_k)+S_k u_s^i \bigr) ds\Bigr)+B_k^\tra \exp{\left(-A_k^\tra t\right)} M_{2,t}^i \\
    &M_{1,t}^i = \mb{E}\left[ \exp{(G^i_T(u))}|\mathcal{F}_t^i\right],\allowdisplaybreaks\\
    &M_{2,t}^i = \mb{E} \Biggl[\exp{(G^i_T(u))}\biggl(\exp{\left(A_k^\tra T\right)} \hQ_k (x_T^i-H_k^{\pi}\bar{x}_T-\eta_k)+\int_0^T  \exp{\left(A_k^\tra s\right)}  \nnb\\&\text{\hspace{3cm}} \times \Bigr(Q_k (x_s^i-H_k^{\pi}\bar{x}_s-\eta_k)+S_k u_s^i \Bigl)ds  \biggr)\bigg| \mathcal{F}_t^i\Biggr].
\end{align}
\end{theorem}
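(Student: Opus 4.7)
The plan is to unravel $J^{i,\infty}(u^i) = \gamma_k \log \mb{E}[\exp(G_T^i(u))]$ using the chain rule, perturb the state linearly, and then reorganize everything against $\omega^i$ via Fubini and the tower property so that the $M_{1,t}^i$ and $M_{2,t}^i$ factors emerge. Interchanging derivative and expectation (justified by dominated convergence under Assumption~\ref{ass:ConvCond} and the $L^2$-admissibility of controls), Definition~\ref{ddef:GateauxDerivative} yields
\[
\langle DJ^{i,\infty}(u^i),\omega^i\rangle \;=\; \gamma_k\,\frac{\mb{E}\!\left[\exp(G_T^i(u))\,\partial_\epsilon G_T^i(u+\epsilon\omega^i)\big|_{\epsilon=0}\right]}{\mb{E}[\exp(G_T^i(u))]}.
\]
The $1/\gamma_k$ baked into $G_T^i$ will cancel the outer $\gamma_k$, delivering the announced form.

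Next I would compute $\partial_\epsilon G_T^i|_{\epsilon=0}$. Perturbing $u^i\mapsto u^i+\epsilon\omega^i$ gives $x_t^{i,\epsilon}=x_t^i+\epsilon y_t^i$ with $\dot y_t^i=A_k y_t^i+B_k\omega_t^i$, $y_0^i=0$, so $y_t^i=\int_0^t e^{A_k(t-s)}B_k\omega_s^i\,ds$; crucially, the mean field $\bar x_t$ is $\mathcal{F}_t^0$-measurable and is unaffected by a single agent's perturbation in the infinite-population limit. Writing $\tilde x_t:=x_t^i-H_k^\pi\bar x_t-\eta_k$ and differentiating the explicit quadratics in $f^k,g^k$ produces a linear form in $(y^i,\omega^i)$:
\[
\gamma_k\,\partial_\epsilon G_T^i|_{\epsilon=0} \;=\; \tilde x_T^\tra \hQ_k\, y_T^i+\int_0^T\!\bigl[(Q_k\tilde x_t+S_ku_t^i)^\tra y_t^i+(S_k^\tra\tilde x_t+R_k u_t^i)^\tra\omega_t^i\bigr]dt.
\]

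Substituting the integral representation of $y_t^i$ into the $y$-terms and applying Fubini on the triangle $0\le s\le t\le T$ collapses the expression into $\int_0^T\omega_t^{i,\tra}\Psi_t\,dt$, where $\Psi_t$ is $\mathcal{F}_T^i$-measurable. The terminal contribution becomes $B_k^\tra e^{A_k^\tra(T-t)}\hQ_k\tilde x_T$ and the running contribution becomes $B_k^\tra\int_t^T e^{A_k^\tra(s-t)}(Q_k\tilde x_s+S_ku_s)\,ds$; both can be factored through $B_k^\tra e^{-A_k^\tra t}$. Multiplying by $\exp(G_T^i(u))$, taking expectations, and using that $\omega_t^i$ is $\mathcal{F}_t^i$-adapted, the tower property lets me take $\mb{E}[\,\cdot\mid\mathcal{F}_t^i]$ inside, so that the $\mathcal{F}_t^i$-measurable factors $S_k^\tra\tilde x_t+R_k u_t^i$ come out against $M_{1,t}^i$ and produce the non-integral portion of the first summand of $h^k$.

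It remains to identify $M_{2,t}^i$. Splitting $\int_t^T=\int_0^T-\int_0^t$ inside the conditional expectation gives
\[
\mb{E}\!\left[\exp(G_T^i(u))\Bigl(e^{A_k^\tra T}\hQ_k\tilde x_T+\!\int_t^T\! e^{A_k^\tra s}(Q_k\tilde x_s+S_ku_s)\,ds\Bigr)\Big|\mathcal{F}_t^i\right] \;=\; M_{2,t}^i - M_{1,t}^i\!\int_0^t e^{A_k^\tra s}(Q_k\tilde x_s+S_ku_s)\,ds,
\]
because the $\int_0^t$ piece is $\mathcal{F}_t^i$-measurable. After multiplying back by $B_k^\tra e^{-A_k^\tra t}$, the $M_{2,t}^i$ piece is the second summand of $h^k$, while the subtracted $M_{1,t}^i\int_0^t e^{A_k^\tra(s-t)}(\cdots)\,ds$ merges into the $M_{1,t}^i$ bracket, yielding exactly the claimed formula. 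The main obstacle is this final bookkeeping: tracking the two semigroup factors $e^{A_k^\tra(T-t)}$ and $e^{A_k^\tra(s-t)}$ produced by Fubini, and reconciling the $\int_0^t/\int_t^T$ splits so that $M_{1,t}^i$ and $M_{2,t}^i$ assemble with the correct signs. The analytic interchange of differentiation and expectation is standard under the LQG integrability and is not the substantive difficulty.
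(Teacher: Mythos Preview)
Your proposal is correct and follows essentially the same route as the paper: perturb the state linearly via the variation-of-constants formula, observe that the mean field is unaffected, differentiate the quadratic integrands, apply Fubini on the triangle to factor out $\omega_t^{i,\tra}$, split $\int_t^T=\int_0^T-\int_0^t$, and then condition on $\mathcal{F}_t^i$ so that $M_{1,t}^i$ and $M_{2,t}^i$ emerge. The only cosmetic difference is that the paper computes the derivative of $\gamma_k\log\mb{E}[\exp(\cdot)]$ via L'H\^opital's rule and a Taylor expansion of the exponential, whereas you invoke the chain rule and dominated convergence directly---your version is slightly cleaner but the computation is identical.
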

\begin{proof}
    To compute the Gâteaux derivative, we start by deriving the agent's state as the solution to the  stochastic differential equation (SDE) given by \eqref{dx_t}. We perturb the control action of the representative agent $i$ and analyze the impact of this perturbation on the agent's state, the mean field, and the cost functional. Finally, we use Definition \ref{ddef:GateauxDerivative} to derive the Gâteaux derivative of the agent's cost functional. This approach allows us to effectively capture the impact of a small perturbation on the agent's overall performance and on the entire system.
    
    Consider the transformation $y_t=\exp{(-A_k t)}x_t^i $. Using Itô's lemma we can show that $y_t$ satisfies  
    \begin{align}\label{dy_t perturbed}
        dy_t&=-A_k\exp{\left(-A_kt\right)} x_t^i dt+\exp{(-A_k t)}\bigl([A_kx_t^i+F_k^\pi\bar{x}_t+B_k u_t^i+b_k(t)]dt\nnb\\&\text{\hspace{3mm}}+\sigma_kdw_t^i+\sigma_0dw_t^0\bigr).
    \end{align}
    Integrating both sides of \eqref{dy_t perturbed} from $0$ to $t$ and then multiplying by $\exp{(A_kt)}$, we get 
    \begin{align}\label{x-state}
        x_t^i&=\exp{(A_kt)}x_0+\int_0^t \exp{\left(A_k(t-s)\right)}(F_k^\pi\bar{x}_s +B_ku_s^i+b_k(s))ds +\int_0^t\exp{\left(A_k(t-s)\right)}\sigma_kdw_s^i\nnb\\&\text{\hspace{3mm}}+\int_0^t \exp{\left(A_k(t-s)\right)}\sigma_0dw_s^0.
    \end{align}
    
    Let $x_t^{i,\epsilon}$ denote the solution to \eqref{dx_t} subject to a perturbed control action $u_t^i+\epsilon\omega_t^i$ in the direction of $\omega_t^i \in \mathcal{U}^i$. From \eqref{x-state}, we can write
    \begin{equation}\label{x_per}
        x_t^{i,\epsilon}=x_t^i+\epsilon\int_0^t \exp{\left(A_k(t-s)\right)}B_k\omega_s^ids.
    \end{equation}
    Subsequently, the infinitesimal variation of $x_t^{i,\epsilon}$ is given by 
    \begin{equation}\label{dx_per}
        dx_t^{i,\epsilon}=dx_t^i+\epsilon B_k \omega_t^i dt+\epsilon A_k\int_0^t \exp{\left(A_k(t-s)\right)}B_k\omega_s^i ds. 
    \end{equation}
    
    On the one hand, we observe that the perturbed mean field $\bar{x}_t^{k,\epsilon}$ because of the perturbed control action of agent $i$ in type $k$, if the limit exists, is defined by 
    \begin{equation}
        \bar{x}_t^{k,\epsilon}=\lim_{N_k \rightarrow \infty}\frac{1}{N_k}\left(\sum_{j \in \mc{I}_k ,j\neq i} x_t^j+x_t^{i,\epsilon}\right).\label{MF_type_pert}
    \end{equation}
    On the other hand, the mean field of other agents belonging to the other types is not perturbed.
    Thus, we note that the population mean field $\bar{x}_t^\epsilon$, if the limits exist, is defined by 
    \begin{equation}
        \bar{x}_t^\epsilon =\begin{bmatrix}(\bar{x}_t^1)^\tra & \dots & (\bar{x}_t^{k,\epsilon})^\tra & \dots & (\bar{x}_t^K)^\tra   
    \end{bmatrix}.
    \end{equation}
    From \eqref{MF_type_pert}, for the infinite-population model, the impact of the perturbed control action of agent $i$ on the population mean field is negligible. Hence, we conclude that $\bar{x}_t^\epsilon = \bar{x}_t$.
    
    The cost of the perturbed control action $u_t^i+\epsilon\omega_t^i$ and the corresponding perturbed state $x^{i,\epsilon}_t$ is given by
    \begin{equation}
       J^{i,\infty}(u^i+\epsilon\omega^i) = \gamma_k \log\mb{E}\left[\exp\left(\frac{1}{\gamma_k}\left(g^k(x_T^{i,\epsilon},\bar{x}_T)+\int_0^T f^k(x^{i,\epsilon},\bar{x}_t,u_t^i+\epsilon \omega_t^i)dt\right)\right)\right].
    \end{equation} 
    To simplify the notation, we can define 
    \begin{equation}
        G_T^i(u)\coloneqq \frac{1}{\gamma_k}\bigl(g^k(x_T^i,\bar{x}_T)+\int_0^Tf^k(x^i_t,\bar{x}_t,u_t^i)dt\bigr).
    \end{equation}
  Let $\Phi_t=H_k^{\pi}\bar{x}_t+\eta_k$. From \eqref{x_per}, we can write the perturbed integral cost as 
    \begin{equation}
         J^{i,\infty}(u^i+\epsilon\omega^i) = \gamma_k \log\mb{E}\left[ \exp (G^{i,\eps}_T) \right]
    \end{equation}
    where
    \begin{align}
         G^{i,\eps}_T & = \frac{1}{\gamma_k}\bigl(g^k(x_T^{i,\epsilon},\bar{x}_T)+\int_0^T f^k(x^{i,\epsilon}_t,\bar{x}_t,u_t^i+\epsilon\omega^i_t)dt\bigr)  \nnb\\& = G^{i}_T+\frac{1}{2 \gamma_k} \Vert \epsilon\int_0^T \exp{\left(A_k(T-s)\right)}B_k\omega^i_s 
        ds\Vert_{\hQ_k}^2+\frac{1}{ \gamma_k}(x_T^i-\Phi_T)^\tra \hQ_k\epsilon\nnb\\&\text{\hspace{3mm}}\times\int_0^T \exp{\left(A_k(T-s)\right)} B_k\omega^i_sds+\frac{1}{ \gamma_k}
        \int_0^T  \Bigl\{ \frac{1}{2}\Vert \epsilon\int_0^t \exp{\left(A_k(t-s)\right)}B_k\omega^i_sds\Vert_{Q_k}^2\nnb\\&\text{\hspace{3mm}}+(x_t^i-\Phi_t)^\tra  \bigl(Q_k\epsilon\int_0^t \exp{\left(A_k(t-s)\right)}B_k\omega^i_sds+ S_k (\epsilon\omega^i_t)\bigr)+(\epsilon\int_0^t \exp{\left(A_k(t-s)\right)}\nnb\\&\text{\hspace{3mm}}\times B_k\omega^i_sds)^\tra S_k (u_t^i+\epsilon\omega^i_t)+\frac{1}{2}\Vert\epsilon\omega^i_t\Vert_{R_k}^2+(u_t^i)^\tra R_k\epsilon\omega^i_t\Bigr\}dt.
    \end{align}
    By reordering the variables, we obtain
    \begin{align}
        G^{i,\eps}_T &= G^{i}_T+\frac{\epsilon}{\gamma_k}(x_T^i-\Phi_T)^\tra \hQ_k\int_0^T \exp{\left(A_k(T-s)\right)} B_k\omega^i_sds+\frac{\epsilon}{ \gamma_k}
        \int_0^T  \Bigl\{(x_t^i-\Phi_t)^\tra  \nnb\\&\text{\hspace{3mm}}\times\bigl(Q_k\int_0^t \exp{\left(A_k(t-s)\right)}B_k\omega^i_sds+ S_k \omega^i_t\bigr)+(\int_0^t \exp{\left(A_k(t-s)\right)}\nnb\\&\text{\hspace{3mm}}\times B_k\omega^i_sds)^\tra S_k u_t^i+(u_t^i)^\tra R_k\omega^i_t\Bigr\}dt+\frac{\epsilon^2}{2 \gamma_k} \Vert \int_0^T \exp{\left(A_k(T-s)\right)}B_k\omega^i_s      ds\Vert_{\hQ_k}^2\nnb\\&\text{\hspace{3mm}}+\frac{\epsilon^2}{ \gamma_k}
        \int_0^T  \Bigl\{ \frac{1}{2}\Vert \int_0^t \exp{\left(A_k(t-s)\right)}B_k\omega^i_sds\Vert_{Q_k}^2\nnb\\&\text{\hspace{3mm}}+(\int_0^t \exp{\left(A_k(t-s)\right)} B_k\omega^i_sds)^\tra S_k \omega^i_t+\frac{1}{2}\Vert\omega^i_t\Vert_{R_k}^2\Bigr\}dt.
    \end{align}

    According to Definition \ref{ddef:GateauxDerivative}, for the representative agent-$i$ the Gâteaux derivative is given as 
    \begin{align}
        &\langle DJ^{i, \infty}(u^i), \omega^i \rangle = \lim_{\epsilon \to 0} \frac{\gamma_k}{\epsilon} \log \frac{\mb{E}\left[\exp(G^{i,\eps}_T)\right]}{\mb{E}\left[\exp(G^{i}_T)\right]}.
    \end{align}
     As the limit involves an indeterminate quotient, we can employ L'Hôpital's rule while applying Talor expansion on $\exp(G^{i,\eps}_T)$ to continue the analysis as in
    \begin{align}
        &\langle DJ^{i, \infty}(u^i), \omega^i \rangle =\lim_{\epsilon \to 0} \gamma_k\frac{1}{\mb{E}[\exp(G^{i,\eps}_T)]}\frac{\partial }{\partial \epsilon}\mb{E}\Biggl[ \exp(G^{i}_T)\biggl(1+\frac{\epsilon}{\gamma_k}(x_T^i-\Phi_T)^\tra \hQ_k\nnb\\&\text{\hspace{2.9cm}}\times\int_0^T \exp{\left(A_k(T-s)\right)} B_k\omega^i_sds+\frac{\epsilon}{ \gamma_k}
        \int_0^T  \Bigl\{(x_t^i-\Phi_t)^\tra  \bigl(Q_k \nnb\\&\text{\hspace{2.9cm}}\times \int_0^t \exp{\left(A_k(t-s)\right)} B_k\omega^i_sds+ S_k \omega^i_t\bigr)+(\int_0^t \exp{\left(A_k(t-s)\right)} B_k\omega^i_sds)^\tra \nnb\\&\text{\hspace{2.9cm}} \times S_k u_t^i+(u_t^i)^\tra R_k\omega^i_t\Bigr\}dt + \mathcal{O}(\epsilon^2)\biggr)\Biggr].
    \end{align}
    By linearity of the expectation, we have
    \begin{align}
        &\langle DJ^{i, \infty}(u^i), \omega^i \rangle =\lim_{\epsilon \to 0} \gamma_k\frac{1}{\mb{E}[\exp(G^{i,\eps}_T)]}\frac{\partial }{\partial \epsilon}\Biggl[\mb{E}(\exp(G^{i}_T)+\epsilon\mb{E}\biggl(\exp(G^{i}_T)\Bigl(\frac{1}{\gamma_k}(x_T^i-\Phi_T)^\tra \hQ_k\nnb\\&\text{\hspace{2.9cm}}\times\int_0^T \exp{\left(A_k(T-s)\right)} B_k\omega^i_sds+\frac{1}{ \gamma_k}
        \int_0^T  \Bigl\{(x_t^i-\Phi_t)^\tra  \bigl(Q_k\int_0^t \exp{\left(A_k(t-s)\right)}\nnb\\&\text{\hspace{2.9cm}} \times B_k\omega^i_sds+ S_k \omega^i_t\bigr)+(\int_0^t \exp{\left(A_k(t-s)\right)} B_k\omega^i_sds)^\tra S_k u_t^i+(u_t^i)^\tra R_k\omega^i_t\Bigr\}dt\Bigr)\biggr) \nnb\\&\text{\hspace{2.9cm}}+ \epsilon^2 \mb{E}\left(\frac{\exp(G^{i}_T)}{\epsilon^2}\mathcal{O}(\epsilon^2)\right)\Biggr].
    \end{align}
    Then, we can perform the derivative and obtain
    \begin{align}
        &\langle DJ^{i, \infty}(u^i), \omega^i \rangle =\lim_{\epsilon \to 0} \gamma_k\frac{1}{\mb{E}[\exp(G^{i,\eps}_T)]}\Biggl[\mb{E}\biggl(\exp(G^{i}_T)\Bigl(\frac{1}{\gamma_k}(x_T^i-\Phi_T)^\tra \hQ_k\nnb\\&\text{\hspace{2.9cm}}\times\int_0^T \exp{\left(A_k(T-s)\right)} B_k\omega^i_sds+\frac{1}{ \gamma_k}
        \int_0^T  \Bigl\{(x_t^i-\Phi_t)^\tra  \bigl(Q_k \nnb\\&\text{\hspace{2.9cm}}\times \int_0^t \exp{\left(A_k(t-s)\right)}B_k\omega^i_sds+ S_k \omega^i_t\bigr)+(\int_0^t \exp{\left(A_k(t-s)\right)} B_k\omega^i_sds)^\tra \nnb\\&\text{\hspace{2.9cm}}\times S_k u_t^i+(u_t^i)^\tra R_k\omega^i_t\Bigr\}dt\Bigr)\biggr) + 2\epsilon \mb{E}\left(\frac{\exp(G^{i}_T)}{\epsilon^2}\mathcal{O}(\epsilon^2)\right)\nnb\\&\text{\hspace{2.9cm}}+{\epsilon}^2\frac{\partial }{\partial \epsilon}\mb{E}\left(\frac{\exp(G^{i}_T)}{\epsilon^2}\mathcal{O}(\epsilon^2)\right)\Biggr].
    \end{align}
    By performing the limit and simplifying the equation, we obtain
    \begin{align}
        &\langle DJ^{i, \infty}(u^i), \omega^i \rangle =\frac{1}{\mb{E}[\exp(G^{i}_T)]}\Biggl[\mb{E}\biggl(\exp(G^{i}_T)\Bigl((x_T^i-\Phi_T)^\tra \hQ_k\nnb\\&\text{\hspace{2.9cm}}\times\int_0^T \exp{\left(A_k(T-s)\right)} B_k\omega^i_sds+
        \int_0^T  \Bigl\{(x_t^i-\Phi_t)^\tra  \bigl(Q_k\int_0^t \exp{\left(A_k(t-s)\right)}\nnb\\&\text{\hspace{2.9cm}}\times B_k\omega^i_sds+ S_k \omega^i_t\bigr)+(\int_0^t \exp{\left(A_k(t-s)\right)} B_k\omega^i_sds)^\tra\nnb\\&\text{\hspace{2.9cm}}\times S_k u_t^i+(u_t^i)^\tra R_k\omega^i_t\Bigr\}dt\Bigr)\biggr)\Biggr].
    \end{align}
    For clarity, we can transpose and manipulate the order of some matrix multiplications to get 
    \begin{align}\label{GD before Fubini}
        \langle DJ^{i, \infty}(u^i), \omega^i \rangle &= \frac{1}{\mb{E}[\exp(G_T^i(u))]}\mb{E}\Biggl[\exp(G_T^i(u))\biggl(\int_0^T (\omega^i_s)^\tra B_k^\tra  \exp{\left(A_k^\tra(T-s)\right)} ds&\nnb\\&\text{\hspace{3mm}}\times \hQ_k (x_T^i-\Phi_T)+\int_0^T\Bigl( (\omega^i_t)^\tra S_k^\tra (x_t^i-\Phi_t)+ (\omega^i_t)^\tra R_k u_t^i\Bigr)dt&\nnb\\&\text{\hspace{3mm}}+\int_0^T\Bigl(\int_0^t (\omega^i_s)^\tra B_k^\tra  \exp{\left(A_k^\tra(T-s)\right)}  ds Q_k (x_t^i-\Phi_t)&\nnb\allowdisplaybreaks\\&\text{\hspace{3mm}}+\int_0^t  (\omega^i_s)^\tra B_k^\tra  \exp{\left(A_k^\tra(t-s)\right)} ds S_k u_t^i\Bigr)dt
        \biggr)  \Biggr].
    \end{align}
    As the function within the integral is continuous, by Fubini's theorem and the change of order of integrals \citep{CalculusStrang}, the last term in the above equation can be written as
    \begin{multline}\label{GD after Fubini}
        \int_0^T\left(\int_0^t (\omega^i_s)^\tra  B_k^\tra  \exp{\left(A_k^\tra(t-s)\right)}  ds Q_k (x_t^i-\Phi_t)+\int_0^t (\omega^i_s)^\tra B_k^\tra  \exp{\left(A_k^\tra(t-s)\right)} ds S_k u_t^i\right)dt\\=\int_0^T (\omega^i_s)^\tra \left(\int_s^T B_k^\tra  \exp{\left(A_k^\tra(t-s)\right)}  Q_k (x_t^i-\Phi_t) dt+\int_s^T B_k^\tra  \exp{\left(A_k^\tra(t-s)\right)} S_k u_t^idt \right)ds.
    \end{multline}
    From \eqref{GD before Fubini} and \eqref{GD after Fubini}, we can then change the integration variable for the second integral and factor out $(\omega_s^i)^\tra$  and substitute \eqref{GD after Fubini} to get
    \begin{align}\label{GD w factor out}
        \langle DJ^{i, \infty}(u^i), \omega^i \rangle &= \frac{1}{\mb{E}[\exp(G_T^i(u))]}\mb{E}\Biggl[\exp(G_T^i(u))\biggl(\int_0^T (\omega^i_s)^\tra \Bigl\{ S_k^\tra (x_s^i-\Phi_s)+R_k u_s^i&\nnb\\&\text{\hspace{3mm}} + B_k^\tra \Bigl[ \exp{\left(A_k^\tra(T-s)\right)} \hQ_k (x_T^i-\Phi_T)+\int_s^T \Bigl(  \exp{\left(A_k^\tra(T-s)\right)}  Q_k (x_t^i-\Phi_t)&\nnb\\&\text{\hspace{3mm}}+\exp{\left(A_k^\tra(T-s)\right)} S_k u_t^i\Bigr)dt \Bigr] \Bigr\} ds \biggr) \Biggr].
    \end{align}
   The inner integral within \eqref{GD w factor out} can be split in two terms as in  
    \begin{align}
        &\int_s^T \left( \exp{\left(A_k^\tra(t-s)\right)}  Q_k (x_t^i-\Phi_t)+  \exp{\left(A_k^\tra(t-s)\right)} S_k u_t^i\right)dt  \nnb \\&= \int_0^T \exp{\left(A_k^\tra(t-s)\right)}  \left( Q_k (x_t^i-\Phi_t)+ S_k u_t^i \right) dt- \int_0^s \exp{\left(A_k^\tra(t-s)\right)}  \left( Q_k (x_t^i-\Phi_t)+S_k u_t^i \right) dt.
    \end{align}
    Thus, an equivalent expression for the Gâteaux derivative is given by
    \begin{flalign}\label{GD before tower}
        \langle DJ^{i, \infty}(u^i), \omega^i \rangle &= \frac{1}{\mb{E}[\exp(G_T^i(u))]}\mb{E}\Biggl[\exp(G_T^i(u))\biggl(\int_0^T (\omega^i_s)^\tra \biggl\{S_k^\tra (x_s^i-\Phi_s)+R_k u_s^i\nnb\\&\text{\hspace{3mm}}+B_k^\tra \Bigl[ - \int_0^s \exp{\left(A_k^\tra(t-s)\right)}  \left( Q_k (x_t^i-\Phi_t)+S_k u_t^i \right) dt \nnb\\&\text{\hspace{3mm}}+\exp{\left(-A_k^\tra s\right)} \Bigl( \exp{\left(A_k^\tra T\right)} \hQ_k (x_T^i-\Phi_T)\nnb\\&\text{\hspace{3mm}}+\int_0^T \exp{\left(A_k^\tra t\right)}  \left( Q_k (x_t^i-\Phi_t)+ S_k u_t^i \right) dt\Bigr) \Bigr] \biggr\} ds \biggr)\Biggr].
    \end{flalign}
     %Now that we are prepared to use the tower rule, we can proceed by setting
    By taking $\exp \left(G^i_T(u)\right)$ inside the integral in \eqref{GD before tower} and applying the tower rule based on the filtration $\mathcal{F}_s^i$, the Gâteaux derivative then can be written  as 
    \begin{align}
        &\langle DJ^{i, \infty}(u^i), \omega^i \rangle =
        \frac{1}{\mb{E}\left[\exp\left(G_T^i(u)\right)\right]}\mb{E}\Biggl[\int_0^T (\omega^i_s)^\tra \Biggl\{M^i_{1,s}\Big(S_k^\tra (x_s^i-\Phi_s)+R_k u_s^i\nnb\\&\text{\hspace{2.9cm}}+ B_k^\tra \Bigl[ -\int_0^s \exp{\left(A_k^\tra(t-s)\right)} \left( Q_k (x_t^i-\Phi_t)+S_k u_t^i \right) dt\Big)\nnb\\&\text{\hspace{2.9cm}}+\exp{\left(-A_k^\tra s\right)} M_{2,s}^i \Bigr] \Biggr\} ds\Biggr]
    \end{align}
    where
    \begin{align}
        M_{1,s}^i &= \mb{E}\left[ \exp \left(G^i_T(u)\right)|\mathcal{F}_s^i\right],\\
        M_{2,s}^i &= \mb{E} \Biggl[\exp\left(G^i_T(u)\right)\biggl(\exp{\left(A_k^\tra T\right)} \hQ_k (x_T^i-\Phi_T)+\int_0^T  \exp{\left(A_k^\tra t\right)} \nnb \\&\text{\hspace{3mm}} \times \bigl( Q_k (x_t^i-\Phi_t) +S_k u_t^i \bigr) dt  \biggr)\bigg| \mathcal{F}_s^i\Biggr].
    \end{align}
\end{proof}
Using the Gâteaux derivative given in Theorem \ref{Theorem of Gâteaux Derivative Expanded}, we can determine the optimal action $u_t^{i,*}$ that minimizes the cost functional \eqref{cost integral_inf} of the representative agent. A necessary condition for $u_t^{i,*} \in \mc{U}^i$ to be the optimal control action under $\mathbb{P}$ is 
\begin{equation}
    \langle DJ^{i, \infty}(u^{i,*}), \omega^i \rangle = 0 \hspace{20 mm} \forall w \in\mc{U}^i.
\end{equation}
If Assumption \ref{ass:ConvCond} holds, this condition is also a sufficient optimality condition for $u_t^{i,*}$. Hence the optimal control action under $\mathbb{P}$ is given by

\begin{align}\label{optimal-u_risk}
    \medmath{u_t^{i,*} = -R_k^{-1}\Biggl(B_k^\tra  \exp{\left(-A_k^\tra t\right)}  \left[\frac{ M_{2,t}^i}{M_{1,t}^i}-\int_0^t \left(\exp{\left(A_k^\tra s\right)}  \left( Q_k (x_s^i-\Phi_s)+ S_k u_s^{i,*} \right) \right)ds\right]+S_k^\tra (x_t^i-\Phi_t)\Biggr)}
\end{align}
where 
\begin{equation}\label{M2/M1}
    \frac{M_{2,t}^i}{ M_{1,t}^i} = \tfrac{\mb{E}^\mathbb{P} \left[\exp{(G^i_T(u))}\left(\exp{\left(A_k^\tra T\right)} \hQ_k (x_T^i-\Phi_T)+\int_0^T  \exp{\left(A_k^\tra s\right)}  \left(Q_k (x_s^i-\Phi_s)+S_k u_s^{i,*} \right)ds  \right)\bigg| \mathcal{F}_t^i\right]}{\mb{E}^\mathbb{P}\left[ \exp{(G^i_T(u))}|\mathcal{F}_t^i\right]}.
\end{equation}

We observe that in its current form, the optimal control action is not practicable in the context of applications. In particular, we are interested in a linear state feedback form for the optimal control action as it is very convenient when it comes to implementing the optimal strategy. However, due to the nonlinearity introduced by the term \eqref{M2/M1}  in the optimal control action, it is not obvious how such a linear form can be achieved at first glance. By inspecting \eqref{M2/M1} we observe that both the numerator and the denominator are involved with the exponential term $\exp{(G^i_T(u))}$. This fact suggests that a linear form for the optimal control action may be achievable through a change of measure. To investigate this matter, the initial step involves determining whether or not $\exp{(G^i_T(u))}$ may represent a Radon Nikodym derivative. If such a representation is possible, we can transform \eqref{M2/M1} from a quotient of martingales under the measure $\mathbb{P}$ to a martingale under a new measure denoted by $\mathbb{\hat{P}}$. The subsequent step involves identifying the optimal control under $\hat{\mathbb{P}}$, followed by applying the equivalent measure theorem to recover the optimal control under $\mathbb{P}$.

\subsection{Change of Measure}\label{sub: Change of Measure}
This section focuses on the derivation of the Radon-Nikodym exponent, which is needed to transform \eqref{M2/M1} into a martingale under a new probability measure, denoted by $\mathbb{\hat{P}}$. To achieve this, we adopt a strategy of selecting a set of control coefficients. With the help of a judiciously chosen variable and its cumulative change with respect to its infinitesimal difference, $G_T^i(u)$ can be reduced to the desired form. The inspiration behind the introduced change of measure stems from the financial realm, where we evaluate derivatives under the risk-neutral probability to simplify complex pricing calculations. Specifically, we transition from the physical world to the risk-neutral setting by quantifying the risk premium through the Radon-Nikodym exponent.

\begin{theorem}\label{Girsanov exponent}
Consider the LQG risk-sensitive system described by \eqref{dx_t}, \eqref{cost integral_inf}-\eqref{cost_comp_inf},  \eqref{MF_dyn_int}-\eqref{MFmatrices} and suppose that Assumption \ref{ass:ConvCond} holds. The variable $G_T^i(u)- \Theta_0$ admits the representation
\begin{equation}
    G_T^i(u)- \Theta_0=- \frac{1}{2\gamma_k^2}
    \int_0^T \norm{\pmb{\mu(t,\pmb{W}_t)}}^2dt +\frac{1}{\gamma_k}\int_0^T \pmb{\mu(t,\pmb{W}_t)}d\pmb{W}_t
\end{equation}
with $\Theta_0 \in \mathbb{R},\pmb{\mu(t,\pmb{W}_t)}=((\pmb{X}_t)^\tra\pmb{H}_t^k+(\pmb{C}_t^k)^\tra)\pmb{\Sigma}^k$ such that 
\begin{equation}
\pmb{W}_t 
=\begin{bmatrix} w_t^i \\ w_t^0\end{bmatrix},\\
\quad
\pmb{X}_t = \begin{bmatrix} x_t^i\\\bar{x}_t\end{bmatrix},
\quad
\pmb{H}_t^k = \begin{bmatrix}
\Pi_t^k & &\Lambda_t^k\\
       (\Lambda_t^k)^\tra & & \Delta_t^k
\end{bmatrix},\\
\end{equation}
\begin{equation}
\pmb{C}_t^k = \begin{bmatrix}
\Upsilon_t^k \\ \Gamma_t^k
\end{bmatrix},\\
\quad
\pmb{\Sigma}^k=
\begin{bmatrix}\sigma_k && \sigma_0\\
               \pmb{0}_{Kn \times r} && \pmb{1}_{Kn \times n}\sigma_0 
\end{bmatrix}
\end{equation}
with $\Pi_t^k \in \mathbb{S}^{n \times n}$, $ \Delta_t^k \in \mathbb{S}^{Kn \times Kn}$, $\Lambda_t^k \in \mathbb{R}^{n \times Kn} , \Upsilon_t^k \in \mathbb{R}^n$, $\Gamma_t^k \in \mathbb{R}^{Kn}$, if the following condition is satisfied 

\begin{align}\label{Condition change of measure}
    \zeta(u)=&\int_0^T \Bigl(\frac{1}{2\gamma_k}\pmb{X}_t^\tra \pmb{Q_k}\pmb{X}_t+\frac{1}{\gamma_k}\pmb{\eta_k}\pmb{X}_t+\frac{1}{2\gamma_k}\eta^\tra_k Q_k \eta_k +\frac{1}{\gamma_k}\pmb{X}_t^\tra\pmb{S}_k u_t^{i,*} -\frac{1}{\gamma_k}\eta^\tra_k S_k u_t^{i,*} \nnb\\&\text{\hspace{3mm}} + \frac{1}{2\gamma_k}(u_t^{i,*})^\tra R_k u_t^{i,*} +\frac{1}{\gamma_k}((\pmb{X}_t)^\tra\pmb{H}_t^k+(\pmb{C}_t^k)^\tra)\{\tilde{\pmb{A}}_k\pmb{X}_t+\tilde{\pmb{B}}_k u_t^{i,*}+\tilde{\pmb{\beta_k}} \bar{u}_t+\tilde{\pmb{M}}_t\}\Bigr) dt \nnb\\&\text{\hspace{3mm}} +\frac{1}{2\gamma_k}\int_0^T tr\bigl(\sigma_k^\tra \Pi_t^k \sigma_k)+\sigma_0^\tra\left(\Pi_t^k+2\pmb{1}_{n \times Kn}(\Lambda_t^k)^\tra+\pmb{1}_{n \times Kn}\Delta_t^k\pmb{1}_{Kn \times n}\right)\sigma_0\bigr)dt\nnb\\&\text{\hspace{3mm}}+\frac{1}{2\gamma_k}\int_0^T\pmb{X}_t^\tra d\pmb{H}_t^k \pmb{X}_t+\frac{1}{\gamma_k}\int_0^Td(\pmb{C}_t^k)^\tra\pmb{X}_t+\frac{1}{2 \gamma_k}\int_0^Td\Psi_t^k\nnb\\&\text{\hspace{3mm}} + \frac{1}{2\gamma_k^2}\int_0^T\norm{\pmb{\mu(t,\pmb{W}_t)}}^2 dt = 0 
 \end{align}
with $\Theta_0,\Psi_t^k \in \mathbb{R}$,
\begin{equation}
\quad
\pmb{Q_k} = \begin{bmatrix}
Q_k & & -Q_kH_k^\pi\\
       -(H_k^\pi)^\tra Q_k & & (H_k^\pi)^\tra Q_kH_k^\pi
\end{bmatrix},
\quad
\pmb{\eta_k} = \begin{bmatrix}
-\eta^\tra_k Q_k && \eta^\tra_k Q_k H_k^\pi
\end{bmatrix},
\end{equation}
\begin{equation}
\pmb{S}_k = \begin{bmatrix}
S_k \\ -(H_k^\pi)^\tra S_k
\end{bmatrix},
\quad
\tilde{\pmb{A}}_k
=\begin{bmatrix}A_k && F_k^\pi\\
                \pmb{0}_{Kn \times Kn} && \br{A}
\end{bmatrix},
\quad
\tilde{\pmb{B}}_k
=\begin{bmatrix} B_k \\ \pmb{0}_{Kn \times m} \end{bmatrix},
\end{equation}
\begin{equation}
\tilde{\pmb{\beta_k}}
=\begin{bmatrix} \pmb{0}_{n \times Km} \\ \br{B} \end{bmatrix},
\quad
\tilde{\pmb{M}}_t =\begin{bmatrix} b_k(t)\\ \br{m}_t \end{bmatrix},
\quad
\pmb{\Sigma}^k=
\begin{bmatrix}\sigma_k && \sigma_0\\
               \pmb{0}_{Kn \times r} && \pmb{1}_{Kn \times n}\sigma_0 
\end{bmatrix}.
\end{equation}
Moreover, there exists a probability measure $\mathbb{\hat{P}}$ characterized by the Radon Nikodym variable $\frac{d\mathbb{\hat{P}}}{d\mathbb{P}}=\exp{\left(G_T^i(u) - \Theta_0\right)}$.
\end{theorem}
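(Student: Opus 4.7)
The plan is to re-express $G_T^i(u)$ as $\Theta_0$ plus the standard Girsanov exponent $-\tfrac{1}{2\gamma_k^2}\int_0^T \|\pmb{\mu}(t,\pmb{W}_t)\|^2\,dt + \tfrac{1}{\gamma_k}\int_0^T \pmb{\mu}(t,\pmb{W}_t)\,d\pmb{W}_t$ and then invoke Girsanov's theorem. The natural route is to posit a quadratic-affine auxiliary process of $\pmb{X}_t$ whose Itô differential absorbs the quadratic, linear, and constant cost pieces of $g^k$ and $f^k$, leaving behind only a stochastic integral and a deterministic constant.

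First I would stack the agent state and the mean field into $\pmb{X}_t = (x_t^{i\tra},\bar{x}_t^\tra)^\tra$ and combine \eqref{dx_t} with the mean-field dynamics \eqref{MF_dyn_int} to verify that $d\pmb{X}_t = (\tilde{\pmb{A}}_k\pmb{X}_t + \tilde{\pmb{B}}_k u_t^{i,*} + \tilde{\pmb{\beta}}_k \bar{u}_t + \tilde{\pmb{M}}_t)\,dt + \pmb{\Sigma}^k\,d\pmb{W}_t$ with the block matrices given in the statement. Since $x_t^i - H_k^\pi\bar{x}_t - \eta_k$ becomes an affine function of $\pmb{X}_t$, the integrand $\tfrac{1}{\gamma_k} f^k$ and the terminal penalty $\tfrac{1}{\gamma_k} g^k$ can be rewritten through the block matrices $\pmb{Q}_k$, $\pmb{\eta}_k$, $\pmb{S}_k$ that already appear in $\zeta(u)$, so $G_T^i(u)$ becomes the time integral of a quadratic-affine form in $\pmb{X}_t$ plus a quadratic-affine terminal payoff.

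Next I would apply It\^o's lemma to the candidate $Y_t^k = \tfrac{1}{2\gamma_k}\pmb{X}_t^\tra\pmb{H}_t^k\pmb{X}_t + \tfrac{1}{\gamma_k}(\pmb{C}_t^k)^\tra\pmb{X}_t + \tfrac{1}{2\gamma_k}\Psi_t^k$, whose diffusion part is exactly $\tfrac{1}{\gamma_k}\bigl((\pmb{X}_t)^\tra\pmb{H}_t^k + (\pmb{C}_t^k)^\tra\bigr)\pmb{\Sigma}^k\,d\pmb{W}_t = \tfrac{1}{\gamma_k}\pmb{\mu}(t,\pmb{W}_t)\,d\pmb{W}_t$. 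Setting $\Theta_0 := Y_0^k$ and equating $G_T^i(u) - \Theta_0$ to the right-hand side of the claimed representation, the difference collapses to a single drift integral whose integrand is precisely that of $\zeta(u)$ in \eqref{Condition change of measure}: the hypothesis $\zeta(u) = 0$ forces the remaining drift --- assembled from $d\pmb{H}_t^k$, $d\pmb{C}_t^k$, $d\Psi_t^k$, the cross-terms $((\pmb{X}_t)^\tra\pmb{H}_t^k + (\pmb{C}_t^k)^\tra)(\tilde{\pmb{A}}_k\pmb{X}_t + \tilde{\pmb{B}}_k u_t^{i,*} + \tilde{\pmb{\beta}}_k\bar{u}_t + \tilde{\pmb{M}}_t)$, the It\^o trace correction, the quadratic cost blocks, and the $-\tfrac{1}{2\gamma_k^2}\|\pmb{\mu}\|^2$ compensator --- to vanish, yielding the stated Girsanov form.

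With the representation in hand, $\exp(G_T^i(u) - \Theta_0)$ is the Dol\'eans--Dade stochastic exponential of the $\mathbb{P}$-local martingale $M_t := \tfrac{1}{\gamma_k}\int_0^t \pmb{\mu}(s,\pmb{W}_s)\,d\pmb{W}_s$. Under Assumption \ref{ass:ConvCond}, with $\pmb{\mu}$ linear in the Gaussian process $\pmb{X}_t$ over the finite horizon $[0,T]$, Novikov's condition $\mb{E}[\exp(\tfrac{1}{2}\langle M\rangle_T)] < \infty$ holds, so the exponential is a genuine martingale of unit mean and Girsanov's theorem delivers the equivalent measure $\hat{\mathbb{P}}$ with $\tfrac{d\hat{\mathbb{P}}}{d\mathbb{P}} = \exp(G_T^i(u) - \Theta_0)$. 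The main technical obstacle will be the matching step itself: the identity $\zeta(u) = 0$ is a pointwise constraint on the coefficient functions $\Pi_t^k, \Lambda_t^k, \Delta_t^k, \Upsilon_t^k, \Gamma_t^k, \Psi_t^k$ which, once unpacked, becomes a coupled system of Riccati-type ODEs whose solvability on $[0,T]$ must be verified to make the ansatz admissible; verifying Novikov's condition for the chosen linear-feedback $u^{i,*}$ is a secondary but necessary check.
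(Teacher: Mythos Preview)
Your proposal is essentially the paper's own proof: introduce the quadratic--affine auxiliary $\Theta_t^k = Y_t^k$, apply It\^o's formula, identify the diffusion part with $\tfrac{1}{\gamma_k}\pmb{\mu}(t,\pmb{W}_t)\,d\pmb{W}_t$, add and subtract $\tfrac{1}{2\gamma_k^2}\int\|\pmb{\mu}\|^2\,dt$, and recognize the leftover drift as exactly $\zeta(u)$. Two small points you leave implicit that the paper makes explicit: (i) the terminal matching $\Theta_T = \tfrac{1}{2\gamma_k}\pmb{X}_T^\tra\hat{\pmb{Q}}_k\pmb{X}_T + \tfrac{1}{\gamma_k}\hat{\pmb{\eta}}_k\pmb{X}_T + \tfrac{1}{2\gamma_k}\eta_k^\tra\hat{Q}_k\eta_k$ is what kills the non-integral terms and pins down the terminal values of $\pmb{H}_T^k,\pmb{C}_T^k,\Psi_T^k$; and (ii) your Novikov argument presumes $\pmb{X}_t$ is Gaussian, which in turn requires $u^{i,*}$ to already be linear feedback---something only established in the subsequent theorem---so the paper simply cites Duncan (2013) and Karatzas--Shreve for the martingale property rather than verifying Novikov directly at this stage.
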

\begin{proof}
For the sake of clarity and organization, we will employ matrix notation instead of more cumbersome scalar notation. For this purpose, we consider
\begin{equation}
\pmb{X}_t = \begin{bmatrix} x_t^i\\\bar{x}_t\end{bmatrix},
\quad
\pmb{H}_t^k = \begin{bmatrix}
\Pi_t^k & &\Lambda_t^k\\
       (\Lambda_t^k)^\tra & & \Delta_t^k
\end{bmatrix},
\quad
\pmb{C}_t^k = \begin{bmatrix}
\Upsilon_t^k \\ \Gamma_t^k
\end{bmatrix}
\end{equation}
where $\Pi_t^k \in \mathbb{S}^{n \times n}$, $ \Delta_t^k \in \mathbb{S}^{Kn \times Kn}$, $\Lambda_t^k \in \mathbb{R}^{n \times Kn} , \Upsilon_t^k \in \mathbb{R}^n$, $\Gamma_t^k \in \mathbb{R}^{Kn}$. For our purpose, motivated by \citet{Duncan2013}, we define the expression
\begin{equation}\label{auxiliary}
    \Theta_t^k=\frac{1}{2\gamma_k}\pmb{X}_t^\tra \pmb{H}_t^k \pmb{X}_t+\frac{1}{\gamma_k}(\pmb{C}_t^k)^\tra\pmb{X}_t+\frac{1}{2 \gamma_k}\Psi_t^k
\end{equation}
where $\Psi_t^k \in \mathbb{R}$, $\pmb{H}_t^k,\pmb{C}_t^k,\Psi_t^k$ are deterministic. We have
\begin{equation}\label{integral-eq-theta}
    \int_0^T d\Theta_t=\Theta_T - \Theta_0.
\end{equation}
Then we apply Itô's lemma to obtain the infinitesimal variations of $\Theta^k_t$ as in 
\begin{align}\label{dtheta_eq}
    \int_0^T d\Theta_t=&\int_0^T \Biggl\{\frac{1}{2\gamma_k}\pmb{X}_t^\tra d\pmb{H}_t^k \pmb{X}_t+\frac{1}{\gamma_k}\pmb{X}_t^\tra \pmb{H}_t^k d\pmb{X}_t+\frac{1}{2\gamma_k}d\left<\pmb{X}^\tra \pmb{H}_t^k \pmb{X}\right>_t+\frac{1}{\gamma_k}d(\pmb{C}_t^k)^\tra\pmb{X}_t\nnb\\&+\frac{1}{\gamma_k}(\pmb{C}_t^k)^\tra d \pmb{X}_t+\frac{1}{2 \gamma_k}d\Psi_t^k \Biggr\}.
\end{align}
By substituting \eqref{dtheta_eq} in \eqref{integral-eq-theta} and taking all the terms to one side we have 
\begin{align}
    0=&-(\Theta_T - \Theta_0)\nnb
    \\&+\int_0^T \Bigg\{\frac{1}{2\gamma_k}\pmb{X}_t^\tra d\pmb{H}_t^k \pmb{X}_t+\frac{1}{\gamma_k}\pmb{X}_t^\tra \pmb{H}_t^k d\pmb{X}_t+\frac{1}{2\gamma_k}d\left<\pmb{X}^\tra \pmb{H}_t^k \pmb{X}\right>_t+\frac{1}{\gamma_k}d(\pmb{C}_t^k)^\tra\pmb{X}_t\nnb\\&+\frac{1}{\gamma_k}(\pmb{C}_t^k)^\tra d \pmb{X}_t+\frac{1}{2 \gamma_k}d\Psi_t^k \Bigg\}\label{Itô's lemma for change of measure}
\end{align}
where
\begin{equation}
    d\pmb{X}_t = \{\tilde{\pmb{A}}_k\pmb{X}_t+\tilde{\pmb{B}}_k u_t^{i,*}+\tilde{\pmb{\beta_k}} \bar{u}_t^*+\tilde{\pmb{M}}_t\}dt+ \pmb{\Sigma}^k d\pmb{W}_t
\end{equation}
with 
\begin{equation}
\tilde{\pmb{A}}_k
=\begin{bmatrix}A_k && F_k^\pi\\
                \pmb{0}_{Kn \times Kn} && \br{A}
\end{bmatrix},
\quad
\tilde{\pmb{B}}_k
=\begin{bmatrix} B_k \\ \pmb{0}_{Kn \times m} \end{bmatrix},
\quad
\tilde{\pmb{\beta_k}}
=\begin{bmatrix} \pmb{0}_{n \times Km} \\ \br{B} \end{bmatrix},
\quad
\tilde{\pmb{M}}_t =\begin{bmatrix} b_k(t)\\ \br{m}_t \end{bmatrix}
\end{equation}
\begin{equation}
\pmb{\Sigma}^k=
\begin{bmatrix}\sigma_k && \sigma_0\\
               \pmb{0}_{Kn \times r} && \pmb{1}_{Kn \times n}\sigma_0 
\end{bmatrix},
\quad
\pmb{W}_t 
=\begin{bmatrix} w_t^i \\ w_t^0\end{bmatrix}.
\end{equation}
For the sake of clarity, we also write $G^i_T(u)$ in terms of $\pmb{X}$,
\begin{align}\label{G in matrices}
    G_T^i(u)&=\frac{1}{\gamma_k}\biggl[\frac{1}{2}\Vert x_T^i-H_k^{\pi}\bar{x}_T-\eta_k\Vert_{\hQ_k}^2\nnb\\&\text{\hspace{3mm}}+\int_0^T \frac{1}{2} \biggl\{ \Vert
    x_t^i-H_k^{\pi}\bar{x}_t-\eta_k\Vert_{Q_k}^2+2(x_t^i-H_k^{\pi}\bar{x}_t-\eta_k)^\tra  S_k u_t^{i,*}+\Vert u_t^{i,*}\Vert_{R_k}^2 \biggr\}dt\biggr]\nnb\\
    &=\frac{1}{2\gamma_k}\pmb{X}_T^\tra \pmb{\hat{Q}_k}\pmb{X}_T+\frac{1}{\gamma_k}\pmb{\hat{\eta}_k}\pmb{X}_T+\frac{1}{2\gamma_k}\eta^\tra _k\hat{Q}_k \eta_k+\int_0^T \biggl\{\frac{1}{2\gamma_k}\pmb{X}_t^\tra \pmb{Q_k}\pmb{X}_t+\frac{1}{\gamma_k}\pmb{\eta_k}\pmb{X}_t \nnb\\&\text{\hspace{3mm}}+\frac{1}{2\gamma_k}\eta^\tra_k Q_k \eta_k+\frac{1}{\gamma_k}\pmb{X}_t^\tra\pmb{S}_k u_t^{i,*} -\frac{1}{\gamma_k}\eta^\tra_k S_k u_t^{i,*} + \frac{1}{2\gamma_k}(u_t^{i,*})^\tra R_k u_t^{i,*} \biggr\}dt
\end{align}
where
\begin{equation}
\quad
\pmb{\hat{Q}_k} = \begin{bmatrix}
\hat{Q}_k & & -\hat{Q}_k H_k^\pi\\
       -(H_k^\pi)^\tra \hat{Q}_k & & (H_k^\pi)^\tra \hat{Q}_kH_k^\pi
\end{bmatrix},
\quad
\pmb{\hat{\eta}_k} = \begin{bmatrix}
-\eta^\tra_k\hat{Q}_k && \eta^\tra_k\hat{Q}_kH_k^\pi
\end{bmatrix}
\quad
\end{equation}
\begin{equation}
\quad
\pmb{Q_k} = \begin{bmatrix}
Q_k & & -Q_kH_k^\pi\\
       -(H_k^\pi)^\tra Q_k & & (H_k^\pi)^\tra Q_k H_k^\pi
\end{bmatrix},
\pmb{\eta_k} = \begin{bmatrix}
-\eta^\tra_k Q_k && \eta^\tra_k Q_k H_k^\pi
\end{bmatrix},
\pmb{S}_k = \begin{bmatrix}
S_k \\ -(H_k^\pi)^\tra S_k
\end{bmatrix}.
\end{equation}
Then, we add together both sides of \eqref{Itô's lemma for change of measure} and \eqref{G in matrices} to get 
\begin{align}
    G_T^i(u)&=\frac{1}{2\gamma_k}\pmb{X}_T^\tra \pmb{\hat{Q}_k}\pmb{X}_T+\frac{1}{\gamma_k}\pmb{\hat{\eta}_k}\pmb{X}_T+\frac{1}{2\gamma_k}\eta^\tra_k \hat{Q}_k \eta_k -\Theta_T + \Theta_0+\int_0^T \biggl\{\frac{1}{2\gamma_k}\pmb{X}_t^\tra \pmb{Q_k}\pmb{X}_t\nnb\\&\text{\hspace{3mm}}+\frac{1}{\gamma_k}\pmb{\eta_k}\pmb{X}_t+\frac{1}{2\gamma_k}\eta^\tra_k Q_k \eta_k +\frac{1}{\gamma_k}\pmb{X}_t^\tra\pmb{S}_k u_t^{i,*} -\frac{1}{\gamma_k}\eta^\tra_k S_k u_t^{i,*} + \frac{1}{2\gamma_k}(u_t^{i,*})^\tra R_k u_t^{i,*} \biggr\}dt\nnb\\&\text{\hspace{3mm}}+\int_0^T \frac{1}{\gamma_k}\pmb{X}_t^\tra \pmb{H}_t^k d\pmb{X}_t+\int_0^T \frac{1}{\gamma_k}(\pmb{C}_t^k)^\tra d \pmb{X}_t+\int_0^T \frac{1}{2\gamma_k}d\left<\pmb{X}^\tra \pmb{H}_t^k \pmb{X}\right>_t\nnb\\&\text{\hspace{3mm}}+\int_0^T \frac{1}{2\gamma_k}\pmb{X}_t^\tra d\pmb{H}_t^k \pmb{X}_t+\int_0^T \frac{1}{\gamma_k}d(\pmb{C}_t^k)^\tra\pmb{X}_t+\int_0^T \frac{1}{2 \gamma_k}d\Psi_t^k.\label{exponent Girsanov eq}
\end{align}

The idea is to reduce \eqref{exponent Girsanov eq} to the form 
\begin{equation}\label{Girsanov eq}
    G_T^i(u)- \Theta_0=- \frac{1}{2\gamma_k^2}
    \int_0^T \norm{\pmb{\mu(t,\pmb{W}_t)}}^2dt +\frac{1}{\gamma_k}\int_0^T \pmb{\mu(t,\pmb{W}_t)}d\pmb{W}_t.
\end{equation}
From \eqref{exponent Girsanov eq}, set the terminal conditions $\frac{1}{2\gamma_k}\pmb{X}_T^\tra \pmb{\hat{Q}_k}\pmb{X}_T+\frac{1}{\gamma_k}\pmb{\hat{\eta}_k}\pmb{X}_T+\frac{1}{2\gamma_k}\eta^\tra_k \hat{Q}_k \eta_k=\Theta_T$, then we can consider $\pmb{\mu(t,\pmb{W}_t)}=(\pmb{X}_t^\tra\pmb{H}_t^k+(\pmb{C}_t^k)^\tra)\pmb{\Sigma}^k$ which belongs to the space of adapted stochastic processes $(\Omega, \pmb{F}, (\mc{F}^i_t)_{t\in \mfT}, \mathbb{P})$, especially to the space of square-integrable functions defined on the interval $\mfT$. Next, we add and subtract the following formula to \eqref{exponent Girsanov eq}
\begin{equation}
    \frac{1}{2\gamma_k^2}\int_0^T\norm{\pmb{\mu(t,\pmb{W}_t)}}^2dt =\frac{1}{2\gamma_k^2}\int_0^T tr\left((\pmb{\Sigma}^k)^\tra\left(\pmb{H}_t^k\pmb{X}_t+\pmb{C}_t^k\right)\left((\pmb{X}_t)^\tra\pmb{H}_t^k+(\pmb{C}_t^k)^\tra\right)\pmb{\Sigma}^k\right)dt.
\end{equation}
Additionally, from \eqref{dx_t} and \eqref{MF_dyn_int}, we can further expand the quadratic variation term
\begin{align}
    d\left<\pmb{X}^\tra \pmb{H}_t^k \pmb{X}\right>_t= tr\left(\sigma_k^\tra \Pi_t^k \sigma_k+\sigma_0^\tra\left(\Pi_t^k+2\pmb{1}_{n \times Kn}(\Lambda_t^k)^\tra+\pmb{1}_{n \times Kn}\Delta_t^k\pmb{1}_{Kn \times n}\right)\sigma_0\right)dt. 
\end{align}
Then, \eqref{exponent Girsanov eq} may be represented as

\begin{align}
    G_T^i(u)=&\Theta_0+\zeta(u) - \frac{1}{2\gamma_k^2}\int_0^T\norm{\pmb{\mu(t,\pmb{W}_t)}}^2 dt+\frac{1}{\gamma_k}\int_0^T\pmb{\mu(t,\pmb{W}_t)} d\pmb{W}_t.
\end{align}
where 
\begin{align}
    \zeta(u)=&\int_0^T \Bigl(\frac{1}{2\gamma_k}\pmb{X}_t^\tra \pmb{Q_k}\pmb{X}_t+\frac{1}{\gamma_k}\pmb{\eta_k}\pmb{X}_t+\frac{1}{2\gamma_k}\eta^\tra_k Q_k \eta_k +\frac{1}{\gamma_k}\pmb{X}_t^\tra\pmb{S}_k u_t^{i,*} -\frac{1}{\gamma_k}\eta^\tra_k S_k u_t^{i,*}  \nnb\\&+ \frac{1}{2\gamma_k}(u_t^{i,*})^\tra R_k u_t^{i,*}+\frac{1}{\gamma_k}((\pmb{X}_t)^\tra\pmb{H}_t^k+(\pmb{C}_t^k)^\tra)\{\tilde{\pmb{A}}_k\pmb{X}_t+\tilde{\pmb{B}}_k u_t^{i,*}+\tilde{\pmb{\beta_k}} \bar{u}_t^*+\tilde{\pmb{M}}_t\}\Bigr) dt\nnb\\&+ \frac{1}{2\gamma_k^2}\int_0^T\norm{\pmb{\mu(t,\pmb{W}_t)}}^2 dt+\frac{1}{2\gamma_k}\int_0^T\pmb{X}_t^\tra d\pmb{H}_t^k \pmb{X}_t+\frac{1}{\gamma_k}\int_0^Td(\pmb{C}_t^k)^\tra\pmb{X}_t+\frac{1}{2 \gamma_k}\int_0^Td\Psi_t^k \nnb\\&+ \frac{1}{2\gamma_k}\int_0^Ttr\bigl(\sigma_k^\tra \Pi_t^k \sigma_k+\sigma_0^\tra \bigl(\Pi_t^k+2\pmb{1}_{n \times Kn}(\Lambda_t^k)^\tra +\pmb{1}_{n \times Kn}\Delta_t^k\pmb{1}_{Kn \times n}\bigr)\sigma_0\bigr)dt.
\end{align}
Finally, we obtain the following desired form \eqref{Girsanov eq} for the change of measure
\begin{align}
    G_T^i(u)- \Theta_0=\zeta(u)- \frac{1}{2\gamma_k^2}\int_0^T\norm{\pmb{\mu(t,\pmb{W}_t)}}^2 dt +\frac{1}{\gamma_k}\int_0^T\pmb{\mu(t,\pmb{W}_t)} d\pmb{W}_t
\end{align}
given that $\zeta(u)=0$ is satisfied. In other words, subject to this condition, we have 
\begin{align}\label{FINAL_G}
    \exp(G_T^i(u)- \Theta_0)=\exp\left(- \frac{1}{2\gamma_k^2}\int_0^T\norm{\pmb{\mu(t,\pmb{W}_t)}}^2 dt +\frac{1}{\gamma_k}\int_0^T\pmb{\mu(t,\pmb{W}_t)} d\pmb{W}_t\right).
\end{align}
We refer to \citet{Duncan2013} and \citet{karatzas1991} for the fact that \eqref{FINAL_G} can define an equivalent probability measure $\mathbb{\hat{P}}$, such that  $\frac{d\mathbb{\hat{P}}}{d\mathbb{P}}=\exp{\left(G_T^i(u) - \Theta_0\right)}$ under the condition $\zeta(u)=0$. The proof is complete.
Thus, $\exp(G_T^i(u))$ is a martingale. 

\end{proof}
Consider the quotient of martingales $\frac{M_{2,t}^i}{ M_{1,t}^i}$ from equation $\eqref{M2/M1}$ and the constant $\Theta_0$ from Theorem \ref{Girsanov exponent}. The quotient of two expectations will remain unchanged by being multiplied by a constant value $\exp{(-\Theta_0)}$ in the numerator and denominator leading to 
\begin{align}
    \frac{M_{2,t}^i}{ M_{1,t}^i}& = %\frac{\exp{(-\Theta_0)}\mb{E}^\mathbb{P} \Biggl[\exp{(G^i_T(u))}\biggl(\exp{\left(A_k^\tra T\right)} \hQ_k (x_T^i-\Phi_T)+\int_0^T  \exp{\left(A_k^\tra s\right)}  \left(Q_k (x_s^i-\Phi_s)+S_k u_s^{i,*} \right)ds  \biggr)\bigg| \mathcal{F}_t^i\Biggr]}{\exp{(-\Theta_0)}\mb{E}^\mathbb{P}\left[ \exp{(G^i_T(u))}|\mathcal{F}_t^i\right]}\\
    \tfrac{\mb{E}^\mathbb{P} \Biggl[\exp{(G^i_T(u)-\Theta_0)}\biggl(\exp{\left(A_k^\tra T\right)} \hQ_k (x_T^i-\Phi_T)+\int_0^T  \exp{\left(A_k^\tra s\right)}  \left(Q_k (x_s^i-\Phi_s)+S_k u_s^{i,*} \right)ds  \biggr)\bigg| \mathcal{F}_t^i\Biggr]}{\mb{E}^\mathbb{P}\left[ \exp{(G^i_T(u)-\Theta_0)}|\mathcal{F}_t^i\right]}.
\end{align}
Recall that from Theorem \ref{Girsanov exponent}, we obtain a new measure $\mathbb{\hat{P}}$ defined by 
\begin{equation}
    \frac{d\mathbb{\hat{P}}}{d\mathbb{P}}=\exp(G_T^i(u)- \Theta_0) = \exp \left(- \frac{1}{2\gamma_k^2} \int_0^T \norm{\pmb{\mu(t,\pmb{W}_t)}}^2dt +\frac{1}{\gamma_k}\int_0^T\pmb{\mu(t,\pmb{W}_t)} d\pmb{W}_t\right).
\end{equation}
Moreover, based on \citet[Lemma 8.9.2]{StochasticIntegration},  we obtain the following equality 
\begin{align}
    \medmath{\frac{M_{2,t}^i}{ M_{1,t}^i}=\mb{E} \left[\exp{\left(A_k^\tra T\right)} \hQ_k (x_T^i-\Phi_T)+\int_0^T  \exp{\left(A_k^\tra s\right)}  \left(Q_k (x_s^i-\Phi_s)+S_k u_s^{i,*} \right)ds \bigg| \mathcal{F}_t^i\right] \text{\hspace{3 mm} $\mathbb{\hat{P}}$--a.s.}}\label{M under new measure}
\end{align}
We remark that \eqref{M under new measure} is a martingale under the measure $\mathbb{\hat{P}}$. For the sake of clarity and organization, we define
\begin{equation}
    \hat{M_t^i}=\mb{E}^\mathbb{\hat{P}} \left[\exp{\left(A_k^\tra T\right)} \hQ_k (x_T^i-\Phi_T)+\int_0^T  \exp{\left(A_k^\tra s\right)}  \left(Q_k (x_s^i-\Phi_s)+S_k u_s^{i,*} \right)ds \bigg| \mathcal{F}_t^i\right].
\end{equation}
Therefore, under $\mathbb{\hat{P}}$, \eqref{optimal-u_risk} transforms to
\begin{equation}
     \medmath{u_t^{i,*} = -R_k^{-1}\left[B_k^\tra  \exp{\left(-A_k^\tra  t\right)}  \left[\hat{M_t^i}-\int_0^t \biggl(  \exp{\left(A_k^\tra  s\right)}  \bigl( Q_k (x_s^i-\Phi_s)+ S_k u_s^{i,*} \bigr) \biggr)ds\right]+S_k^\tra (x_t^i-\Phi_t)\right]}.\label{u - risk_neutral}
\end{equation}
Under $\mathbb{\hat{P}}$, the computed $u_t^{i,*}$ is an implicit function. Subsequently, in order to obtain an explicit $u_t^{i,*}$, we investigate the existence of a linear feedback control representation under the new measure.

\subsection{Linear Feedback Representation of Optimal Control}\label{sec: Linear feedback control}
Using the Theorem \ref{Girsanov exponent}, we can obtain an implicit control law as shown in equation \eqref{u - risk_neutral}. To investigate the existence of linear feedback control under the new measure $\mathbb{\hat{P}}$, we introduce an adjoint process, which allows us to transform the control function into a linear process. Specifically, we can identify the control coefficients for the linear feedback control by equating the drift and diffusion terms of the agent dynamics in equation \eqref{dx_t} under the control functions obtained using the martingale representation theorem method, with the ones derived by applying Itô's lemma directly to the dynamics under the new measure $\mathbb{\hat{P}}$. Interestingly, we observe that the control coefficients coincide with the ones used in order to find the Radon-Nikodym exponent. This result underscores the intimate link between these vital methods for analyzing and optimizing stochastic processes.

\begin{theorem}\label{Explicit solution for control}
For the LQG risk-sensitive system described by \eqref{dx_t} and \eqref{cost integral_inf}, under the risk-neutral measure $\mathbb{\hat{P}}$, the optimal control action satisfying \eqref{u - risk_neutral}
admits the linear state feedback representation
\begin{align}
    u_t^{i,*}=-R_k^{-1}\left[(B_k^\tra\Pi^k_t+S_k^\tra) x_t^i+(B_k^\tra\Lambda^k_t-S_k^\tra H_k^{\pi})\bar{x}_t+B_k^\tra\Upsilon^k_t-S_k^\tra \eta_k \right]
\end{align}
where 
\begin{align}
&\begin{cases}\label{Pi}
    &d\Pi_t^k=\Biggl\{-\Pi_t^kA_k-A_k^\tra\Pi_t^k- Q_k+(\Pi_t^kB_k+S_k)R_k^{-1}(B_k^\tra\Pi_t^k+S_k^\tra)\\&\hspace{1.2cm}-\frac{1}{\gamma_k}\biggl[\Pi_t^k\sigma_k\sigma_k^\tra\Pi_t^k+(\Pi_t^k+\Lambda_t^k\pmb{1}_{Kn \times n})\sigma_0\sigma_0^\tra(\Pi_t^k+\pmb{1}_{n \times Kn}(\Lambda_t^k)^\tra)\biggr]\Biggr\}dt\\ 
    &\Pi_T^k=\hat{Q}_k\allowdisplaybreaks\\
\end{cases}\\
&\begin{cases}\label{Lambda}
    &d \Lambda_t^k=\Biggl\{-\Pi_t^kF_k^\pi-\Lambda_t^k\bar{A}_t-A_k^\tra\Lambda_t^k+Q_kH_k^{\pi} +(\Pi_t^kB_k+S_k)R_k^{-1}(B_k^\tra \Lambda_t^k-S_k^\tra H_k^{\pi})\\&\hspace{1.2cm}-\frac{1}{\gamma_k}\biggl[\Pi_t^k\sigma_k\sigma_k^\tra\Lambda_t^k+(\Pi_t^k+\Lambda_t^k\pmb{1}_{Kn \times n})\sigma_0\sigma_0^\tra(\pmb{1}_{n \times Kn}\Delta_t^k+\Lambda_t^k)\biggr]\Biggr\}dt\\
    &\Lambda_T^k=-\hat{Q}_kH_k^\pi
\end{cases}\\
&\begin{cases}\label{Upsilon}
    &d\Upsilon_t^k= \Biggl\{-\Pi_t^kb_k(t)-\Lambda_t^k \bar{m}_t-A_k^\tra \Upsilon_t^k+Q_k \eta_k+(\Pi_t^kB_k+S_k)R_k^{-1}(B_k^\tra \Upsilon_t^k-S_k^\tra \eta_k)\\&\hspace{1.2cm}-\frac{1}{\gamma_k}\left[\Pi_t^k\sigma_k\sigma_k^\tra\Upsilon_t^k+(\Pi_t^k+\Lambda_t^k\pmb{1}_{Kn \times n})\sigma_0\sigma_0^\tra(\Upsilon_t^k+\pmb{1}_{n \times Kn}\Gamma_t^k)\right]\Biggr\}dt\\ 
    &\Upsilon_T^k=-\hat{Q}_k\eta_k.
\end{cases}\\
&\begin{cases}\label{Delta}
    &d\Delta_t^k=\Biggl\{-(H_k^\pi)^\tra Q_k H_k^\pi +\Delta_t^k\bar{A}_t+\bar{A}_t^\tra\Delta_t^k-2(F_k^\pi)^\tra\Lambda_t^k\\&\hspace{1.2cm}+\left((\Lambda_t^k)^\tra B_k - (H_k^\pi)^\tra S_k\right)R_k^{-1}\bigl(B_k^\tra \Lambda_t^k - S_k^\tra H_k^\pi\bigr)\\&\hspace{1.2cm}-\frac{1}{\gamma_k}\biggl[(\Lambda_t^k)^\tra\sigma_k\sigma_k^\tra\Lambda_t^k+((\Lambda_t^k)^\tra+\Delta_t^k\pmb{1}_{Kn \times n})\sigma_0\sigma_0^\tra(\Lambda_t^k+\pmb{1}_{n \times Kn}\Delta_t^k)\biggr]\Biggr\}dt\\
    &\Delta_T^k=-(H_k^\pi)^\tra\Lambda_T^k
\end{cases}\allowdisplaybreaks\\
&\begin{cases}\label{Gamma}
    &d\Gamma_t^k=\Biggl\{-(H_k^\pi)^\tra Q_k \eta_k- (F_k^\pi)^\tra \Upsilon_t^k-(\Lambda_t^k)^\tra b_k(t) -\Delta_t^k\bar{m}_t-(\bar{A}_t)^\tra\Gamma_t^k \\&\hspace{1.2cm}+\left((\Lambda_t^k)^\tra B_k - (H_k^\pi)^\tra S_k\right)R_k^{-1}\left(B_k^\tra \Upsilon_t^k - S_k^\tra \eta_k \right)\\&\hspace{1.2cm}-\frac{1}{\gamma_k}\left[(\Lambda_t^k)^\tra\sigma_k\sigma_k^\tra\Upsilon_t^k+((\Lambda_t^k)^\tra+\Delta_t^k\pmb{1}_{Kn \times n})\sigma_0\sigma_0^\tra(\Upsilon_t^k+\pmb{1}_{n \times Kn}\Gamma_t^k)\right]\Biggr\}dt\\
    &\Gamma_T^k=-(H_k^\pi)^\tra\Upsilon_T^k
\end{cases}\allowdisplaybreaks\\
&\begin{cases}\label{Psi}
    &d\Psi_t^k=\Biggl\{-\eta^\tra_k
    Q_k \eta_k - 2(\Upsilon_t^k)^\tra b_k(t) - 2 (\Gamma_t^k)^\tra\bar{m}_t- tr(\sigma_0^\tra(\Pi_t^k+2\pmb{1}_{n \times Kn}(\Lambda_t^k)^\tra\\&\hspace{1.2cm}+\pmb{1}_{n \times Kn}\Delta_t^k\pmb{1}_{Kn \times n})\sigma_0)-tr(\sigma_k^\tra \Pi_t^k \sigma_k) + ((\Upsilon_t^k)^\tra B_k - \eta^\tra_k S_k)R^{-1}_k(B_k^\tra\Upsilon_t^k-S_k^\tra \eta_k)\\&\hspace{1.2cm}-\frac{1}{\gamma_k}\left[(\Upsilon_t^k)^\tra\sigma_k\sigma_k^\tra\Upsilon_t^k+((\Upsilon_t^k)^\tra+(\Gamma_t^k)^\tra\pmb{1}_{Kn \times n})\sigma_0\sigma_0^\tra(\Upsilon_t^k+\pmb{1}_{n \times Kn}\Gamma_t^k)\right]\Biggr\}dt\\
    &\Psi_t^k=-\eta_k^\tra\Upsilon_T^k.
\end{cases}
\end{align}
with 
\begin{equation}
    \bar{A}_t = \begin{bmatrix} \bar{A}_1\\ \vdots\\\bar{A}_K\end{bmatrix}\in \mathbb{R}^{Kn \times Kn}, \quad 
    \bar{m}_t = \begin{bmatrix} \bar{m}_1\\ \vdots \\ \bar{m}_K \end{bmatrix} \in \mathbb{R}^{Kn},
\end{equation}
and for $k \in \{1,2,...,K\}$
\begin{align}
    \bar{A}_k  =& \left[A_k - B_k R_k^{-1}(B_k^\tra\Pi^k_t+S_k^\tra)\right] \pmb{e}_k+ F^{\pi}_k - B_k R_k^{-1} (B_k^\tra\Lambda^k_t-S_k^\tra H_k^{\pi}),\\
    \bar{m}_k =& b_k+B_kR_k^{-1}S_k^\tra \eta_k-B_k R_k^{-1} B_k^\tra\Upsilon^k_t.
\end{align}
Furthermore, the diffusion terms satisfy the following equations
\begin{align}
    &\Pi_t^k\sigma_k=\exp{\left(-A_k^\tra t\right)} Z_t^i\\
    &(\Pi_t^k+\Lambda_t^k\pmb{1}_{Kn \times n})\sigma_0=\exp{\left(-A_k^\tra t\right)} Z_t^0.
\end{align}
In addition, $u_t^{i,*}$ satisfies Condition \eqref{Condition change of measure} under $\mathbb{\hat{P}}$.
\end{theorem}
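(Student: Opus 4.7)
The plan is to leverage the implicit representation \eqref{u - risk_neutral} by introducing an adjoint process. Define
\[
p_t^i := e^{-A_k^\tra t}\,\hat{M_t^i} - \int_0^t e^{A_k^\tra(s-t)}\bigl(Q_k(x_s^i - \Phi_s) + S_k u_s^{i,*}\bigr)\, ds,
\]
so that the implicit control reads $u_t^{i,*} = -R_k^{-1}\bigl[B_k^\tra p_t^i + S_k^\tra(x_t^i - \Phi_t)\bigr]$. The claimed linear feedback form is then exactly the content of the ansatz $p_t^i = \Pi_t^k x_t^i + \Lambda_t^k \bar{x}_t + \Upsilon_t^k$: substituting this ansatz into the formula for $u_t^{i,*}$ and using $\Phi_t = H_k^\pi \bar{x}_t + \eta_k$ immediately produces the expression stated in the theorem. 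What remains is to identify the deterministic coefficients $\Pi_t^k, \Lambda_t^k, \Upsilon_t^k$ and to verify that the ansatz is consistent with both dynamics.

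I would compute $dp_t^i$ in two different ways and match coefficients. First, starting from the definition of $p_t^i$ and using that $\hat{M_t^i}$ is a $\mathbb{\hat{P}}$-martingale by \eqref{M under new measure}, the martingale representation theorem under the $\mathbb{\hat{P}}$-Brownian motions $(\tilde{w}^i, \tilde{w}^0)$ gives $d\hat{M_t^i} = \tilde{Z}_t^i d\tilde{w}_t^i + \tilde{Z}_t^0 d\tilde{w}_t^0$; Itô's product rule then yields $dp_t^i = \bigl(-A_k^\tra p_t^i - Q_k(x_t^i-\Phi_t) - S_k u_t^{i,*}\bigr)dt + e^{-A_k^\tra t}(\tilde{Z}_t^i d\tilde{w}_t^i + \tilde{Z}_t^0 d\tilde{w}_t^0)$. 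Second, applying Itô's lemma directly to the ansatz using \eqref{dx_t} and \eqref{MF_dyn_int}, with $u_t^{i,*}$ already in feedback form, gives another SDE for the same process. To compare them on a single measure I would use Girsanov to rewrite $dW_t = d\tilde{W}_t + \frac{1}{\gamma_k}\pmb{\mu}(t,\pmb{W}_t)^\tra dt$ under $\mathbb{\hat{P}}$, which injects the extra drift $\frac{1}{\gamma_k}\pmb{\Sigma}^k(\pmb{\Sigma}^k)^\tra(\pmb{H}_t^k \pmb{X}_t + \pmb{C}_t^k)$ into the $x_t^i$ and $\bar{x}_t$ dynamics.

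Matching drift terms that are quadratic in $x_t^i$, linear in $\bar{x}_t$, and constant (with coupling through $u_t^{i,*}$) yields the Riccati-type ODEs \eqref{Pi}--\eqref{Upsilon}, where the $-\frac{1}{\gamma_k}[\cdots]$ corrections are precisely the contributions from the Girsanov drift shift. Matching diffusion terms identifies $\Pi_t^k \sigma_k = e^{-A_k^\tra t}Z_t^i$ and $(\Pi_t^k + \Lambda_t^k \pmb{1}_{Kn\times n})\sigma_0 = e^{-A_k^\tra t}Z_t^0$ as stated. The terminal conditions $\Pi_T^k = \hat{Q}_k$, $\Lambda_T^k = -\hat{Q}_k H_k^\pi$, $\Upsilon_T^k = -\hat{Q}_k \eta_k$ follow from evaluating $p_T^i = e^{-A_k^\tra T}\hat{M_T^i}$ against the ansatz at $t=T$ and reading off the three tensor components.

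The auxiliary equations \eqref{Delta}--\eqref{Psi} for $\Delta_t^k, \Gamma_t^k, \Psi_t^k$ then come from enforcing the compatibility condition $\zeta(u)=0$ of Theorem \ref{Girsanov exponent} with $u = u^{i,*}$. Substituting the linear feedback into $\zeta(u)$ and collecting coefficients of $x_t^i(x_t^i)^\tra$, $\bar{x}_t\bar{x}_t^\tra$, the cross block $x_t^i\bar{x}_t^\tra$, the linear pieces, and the constant, one obtains a hierarchy of matrix ODEs; combined with the already-derived equations for $\Pi_t^k, \Lambda_t^k, \Upsilon_t^k$, these force the stated dynamics and terminal conditions for $\Delta_t^k, \Gamma_t^k, \Psi_t^k$. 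The main obstacle I anticipate is the bookkeeping of the Girsanov-induced $\frac{1}{\gamma_k}$ corrections: they couple all six processes through the quadratic form $(\pmb{X}_t^\tra\pmb{H}_t^k + (\pmb{C}_t^k)^\tra)\pmb{\Sigma}^k(\pmb{\Sigma}^k)^\tra(\pmb{H}_t^k\pmb{X}_t + \pmb{C}_t^k)$, and separating them by their tensor structure while keeping the joint $(x_t^i,\bar{x}_t)$ dependence under control is what generates the nonstandard quadratic correction terms visible in the six Riccati equations.
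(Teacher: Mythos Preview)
Your proposal is correct and follows essentially the same approach as the paper: define the adjoint $p_t^i$, adopt the linear ansatz, compute $dp_t^i$ both from the martingale representation and from It\^o applied to the ansatz (after Girsanov-shifting the dynamics to $\hat{\mathbb{P}}$), match drift and diffusion to obtain \eqref{Pi}--\eqref{Upsilon} and the $Z$-identities, and then enforce $\zeta(u)=0$ to extract \eqref{Delta}--\eqref{Psi}. Two small slips to clean up: the drift of $dp_t^i$ is \emph{linear} (not quadratic) in $x_t^i$, $\bar{x}_t$, and constant, and at $t=T$ one has $p_T^i = \hat{Q}_k(x_T^i-\Phi_T)$ only after the integral in your definition of $p_T^i$ cancels against the integral inside $\hat{M_T^i}$, not because $p_T^i$ equals $e^{-A_k^\tra T}\hat{M_T^i}$ directly.
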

\begin{proof}
Under $\hat{\mathbb{P}}$, we define the adjoint process $(p_t^i)_{t \in \mfT}$ where as 
\begin{equation}\label{p-def-neutral}
    p_t^i =\exp{\left(-A_k^\tra t\right)}  \left[\hat{M_t^i}-\int_0^t \left(  \exp{\left(A_k^\tra s\right)}  \left( Q_k (x_s^i-\Phi_s)+S_k u_s^{i,*} \right) \right)ds\right] \text{\hspace{5 mm}$\hat{\mathbb{P}}$--a.s.}
\end{equation}
By the martingale representation theorem, there exists a
$\mc{F}_t^i$-adapted process $(Z_s)_{s \in \mfT}$ such that
\begin{equation}\label{martingale-neutral}
    \hat{M_t^i}= \hat{M_0^i} + \int_0^t Z_s^i d\hat{w}_s^i+\int_0^t Z_s^0 d\hat{w}_s^0.
\end{equation}
Under $\hat{\mathbb{P}}$, we adopt the following ansatz for the adjoint process
\begin{equation}\label{p-ansatz-neutral}
    p_t^i=\Pi_t^kx_t^i+\Lambda_t^k \bar{x}_t+\Upsilon_t^k \text{\hspace{5 mm}$\hat{\mathbb{P}}$--a.s.}, 
\end{equation}
where  $\Pi_t^k \in \mathbb{S}^{n \times n}$, $\Lambda_t^k \in \mathbb{R}^{n \times Kn}$  and $\Upsilon_t^k \in \mathbb{R}^n$ are deterministic functions of time.\\
We substitute \eqref{p-ansatz-neutral} in \eqref{u - risk_neutral} to get 
\begin{align}
    u_t^{i,*}&=-R_k^{-1} \left[B_k^\tra (\Pi^k_tx_t^i+\Lambda_t^k \bar{x}_t+\Upsilon_t^k)+S_k^\tra (x_t^i - \Phi_t)\right] \nnb
    \\&=-R_k^{-1}\left[(B_k^\tra\Pi^k_t+S_k^\tra) x_t^i+(B_k^\tra\Lambda^k_t-S_k^\tra H_k^{\pi})\bar{x}_t+B_k^\tra\Upsilon^k_t-S_k^\tra \eta_k \right]\label{optimal-u2-neutral} \text{\hspace{5 mm}$\hat{\mathbb{P}}$--a.s.}
\end{align}
Subsequently, the mean field of control actions is given by   $ \bar{u}_t^\tra = \begin{bmatrix}
(\bar{u}_t^1)^\tra &
\dots &
(\bar{u}_t^K)^\tra
\end{bmatrix}$ where
\begin{align}
    \bar{u}_t^k =-R_k^{-1}\left[(B_k^\tra\Pi_t^k+S_k^\tra) \bar{x}_t^k+(B_k^\tra\Lambda^k_t-S_k^\tra H_k^\pi)\bar{x}_t+B_k^\tra\Upsilon^k_t-S_k^\tra \eta_k \right]\label{optimal-bar-u2-neutral} \text{\hspace{5 mm}$\hat{\mathbb{P}}$--a.s.}
\end{align}
We then substitute \eqref{martingale-neutral} in \eqref{p-def-neutral}, and apply Itô's lemma to get
\begin{align}\label{dp_t}
    dp_t^i = & -\left\{A_k^\tra p_t^i+ Q_k (x_t^i-H_k^{\pi}\bar{x}_t-\eta_k)+ S_ku_t^{i,*}\right\}dt \nnb \\ & \hspace{3 mm}+\exp{\left(-A_k^\tra t\right)} Z_t^id\hat{w}_t^i+\exp{\left(-A_k^\tra t\right)} Z_t^0d\hat{w}_t^0 \text{\hspace{3 mm}$\hat{\mathbb{P}}$--a.s.}
\end{align}
Next, we substitute \eqref{optimal-u2-neutral} in \eqref{dp_t}, which results in
\begin{align}
    dp_t^i&=-\Bigl\{A_k^\tra \left\{ \Pi_t^kx_t^i+\Lambda_t^k \bar{x}_t+\Upsilon_t^k\right\}+ Q_k (x_t^i-H_k^{\pi}\bar{x}_t-\eta_k)\nnb\\&\text{\hspace{3mm}}- S_kR_k^{-1}\left[(B_k^\tra\Pi_t^k+S_k^\tra) x_t^i+(B_k^\tra\Lambda_t^k-S_k^\tra H_k^{\pi})\bar{x}_t+B_k^\tra\Upsilon_t^k-S_k^\tra \eta_k \right]\Bigr\}dt\nnb\\&\text{\hspace{3mm}}+\exp{\left(-A_k^\tra t\right)} Z_t^id\hat{w}_t^i+\exp{\left(-A_k^\tra t\right)} Z_t^0d\hat{w}_t^0, \text{\hspace{5 mm}$\hat{\mathbb{P}}$--a.s.}
\end{align}
By reordering the terms, the above equation is expressed as 
\begin{align}
    dp_t^i&=\left\{-A_k^\tra\Pi_t^k- Q_k+S_kR_k^{-1}(B_k^\tra\Pi_t^k+S_k^\tra) \right\}x_t^idt\nnb \\&\text{\hspace{3mm}}
    +\left\{-A_k^\tra\Lambda_t^k+Q_kH_k^{\pi}+ S_kR_k^{-1}(B_k^\tra\Lambda_t^k-S_k^\tra H_k^{\pi}) \right\}\bar{x}_tdt\nnb\\&\text{\hspace{3mm}}
    +\left\{-A_k^\tra \Upsilon_t^k+Q_k \eta_k+S_kR_k^{-1}B_k^\tra\Upsilon_t^k-S_kR_k^{-1}S_k^\tra \eta_k  \right\}dt\nnb\\&\text{\hspace{3mm}}
    +\exp{\left(-A_k^\tra t\right)} Z_t^id\hat{w}_t^i+\exp{\left(-A_k^\tra t\right)} Z_t^0d\hat{w}_t^0, \text{\hspace{5 mm}$\hat{\mathbb{P}}$--a.s.} \label{dp-sol2-neutral}
\end{align}
Next, we apply Itô's lemma to \eqref{p-ansatz-neutral} to get 
\begin{align}
    dp_t^i &=d\Pi_t^kx_t^i+\Pi_t^kdx_t^i+d\Lambda_t^k \bar{x}_t+ \Lambda_t^k         
    d\bar{x}_t+d\Upsilon_t^k, \text{\hspace{5 mm}$\hat{\mathbb{P}}$--a.s.} \label{dp_t2}
\end{align}
In order to obtain the dynamics that $p^i_t$ satisfies under $\hat{\mathbb{P}}$, it is essential to derive both the agent's and the mean field dynamics under the new measure $\hat{\mathbb{P}}$. From Theorem \ref{Girsanov exponent}, and by expanding the term $\pmb{\mu(t,\pmb{W}_t)}$, the Wiener processes under $\mathbb{P}$ are given by 
\begin{align}
    d\hat{w}_t^i=&dw_t^i-\frac{1}{\gamma_k}\sigma_k^\tra (\Pi_t^k x_t^i +\Lambda_t^k \bar{x}_t +\Upsilon_t^k)dt\\
    d\hat{w}_t^0=&dw_t^0-\frac{1}{\gamma_k}\sigma_0^\tra(\Pi_t^k x_t^i +\Lambda_t^k \bar{x}_t+\pmb{1}_{n \times Kn}(\Lambda_t^k)^\tra x_t^i+\pmb{1}_{n \times Kn}\Delta_t^k \bar{x}_t +\Upsilon_t^k+\pmb{1}_{n \times Kn}\Gamma_t^k)dt.
\end{align}
Thus, under $\hat{\mathbb{P}}$, the dynamics \eqref{dx_t} and \eqref{MF_dyn_int} are, respectively, expressed as
\begin{align}
    dx_t^i =& \left( A_kx_t^i+F_k^\pi\bar{x}_t+B_ku_t^{i,*}+b_k(t) \right) dt 
    + \sigma_k\left(d\hat{w}_t^i+ \frac{1}{\gamma_k}\sigma_k^\tra (\Pi_t^k x_t +\Lambda_t^k \bar{x}_t +\Upsilon_t^k)dt \right) \nnb\\ &+\sigma_0\Bigl(d\hat{w}_t^0 + \frac{1}{\gamma_k}\sigma_0^\tra(\Pi_t^k x_t^i +\Lambda_t^k \bar{x}_t+\pmb{1}_{n \times Kn}(\Lambda_t^k)^\tra x_t^i\nnb\\&+\pmb{1}_{n \times Kn}\Delta_t^k \bar{x}_t +\Upsilon_t^k+\pmb{1}_{n \times Kn}\Gamma_t^k)dt \Bigr) \text{\hspace{5 mm}$\hat{\mathbb{P}}$--a.s.},\\
    d\bar{x}_t = &(\br{A}\bar{x}_t  + \br{B} \bar{u}_t^{*}  + \br{m}_t)dt + \pmb{1}_{Kn \times n}\sigma_0\Big(d\hat{w}_t^0 + \frac{1}{\gamma_k}\sigma_0^\tra(\Pi_t^k x_t^i +\Lambda_t^k \bar{x}_t \nnb\\&+\pmb{1}_{n \times Kn}(\Lambda_t^k)^\tra x_t^i+\pmb{1}_{n \times Kn}\Delta_t^k \bar{x}_t +\Upsilon_t^k+\pmb{1}_{n \times Kn}\Gamma_t^k)dt \Big) \text{\hspace{5 mm}$\hat{\mathbb{P}}$--a.s.}
\end{align} 
Now, we substitute the control action \eqref{optimal-u2-neutral} and the mean field of control actions \eqref{optimal-bar-u2-neutral} in the above agent and mean field dynamics under $\hat{\mathbb{P}}$ to obtain
\begin{align} 
    dx_t^i =&  \left[A_k-B_kR_k^{-1}(B_k^\tra\Pi^k_t+S_k^\tra)\right] x_t^idt +\left[F_k^\pi-B_kR_k^{-1}(B_k^\tra\Lambda^k_t-S_k^\tra H_k^{\pi})\right]\bar{x}_tdt \nnb\\&+\left[-B_kR_k^{-1}\left[B_k^\tra\Upsilon^k_t-S_k^\tra \eta_k \right]+b_k(t)\right] dt + \sigma_k\left(d\hat{w}_t^i+ \frac{1}{\gamma_k}\sigma_k^\tra ((\Pi_t^k)^\tra x_t^i +\Lambda_t^k \bar{x}_t +\Upsilon_t^k)dt \right) \nnb\\& +\sigma_0\left(d\hat{w}_t^0 + \frac{1}{\gamma_k}\sigma_0^\tra(\Pi_t^k x_t^i +\Lambda_t^k \bar{x}_t+\pmb{1}_{n \times Kn}(\Lambda_t^k)^\tra x_t^i+\pmb{1}_{n \times Kn}\Delta_t^k \bar{x}_t +\Upsilon_t^k+\pmb{1}_{n \times Kn}\Gamma_t^k)dt \right) 
\end{align}
and 
\begin{align}
    d\bar{x}_t =& (\bar{A}_t\bar{x}_t + \bar{m}_t)dt\nnb \\& + \pmb{1}_{Kn \times n}\sigma_0\Bigl(d\hat{w}_t^0 + \frac{1}{\gamma_k}\sigma_0^\tra(\Pi_t^k x_t^i +\Lambda_t^k \bar{x}_t+\pmb{1}_{n \times Kn}(\Lambda_t^k)^\tra x_t^i \nnb \\ & +\pmb{1}_{n \times Kn}\Delta_t^k \bar{x}_t +\Upsilon_t^k+\pmb{1}_{n \times Kn}\Gamma_t^k)dt \Bigr)
\end{align}
where 
\begin{equation}\label{bar_A & bar_M}
    \bar{A}_t = \begin{bmatrix} \bar{A}_1\\ \vdots\\\bar{A}_K\end{bmatrix}\in \mathbb{R}^{Kn \times Kn}, \quad 
    \bar{m}_t = \begin{bmatrix} \bar{m}_1\\ \vdots \\ \bar{m}_K \end{bmatrix} \in \mathbb{R}^{Kn \times 1},
\end{equation}
and for $k \in \{1,2,...,K\}$
\begin{align}
    \bar{A}_k  =& \left[A_k - B_k R_k^{-1}(B_k^\tra\Pi^k_t+S_k^\tra)\right] \pmb{e}_k+ F^{\pi}_k - B_k R_k^{-1} (B_k^\tra\Lambda^k_t-S_k^\tra H_k^{\pi}),\\
    \bar{m}_k =& b_k+B_kR_k^{-1}S_k^\tra \eta_k-B_k R_k^{-1} B_k^\tra\Upsilon^k_t.
\end{align}
Finally, we substitute the derived agent dynamics and mean field dynamics under $\hat{\mathbb{P}}$ in \eqref{dp_t2} to obtain the dynamics that $p^i_t$ satisfies as 
\begin{align}
    dp_t^i=&d\Pi_t^kx_t^i+\Bigl\{\Pi_t^kA_k-\Pi_t^kB_kR_k^{-1}(B_k^\tra\Pi_t^k+S_k^\tra)\nnb\\&+\frac{1}{\gamma_k}\left[\Pi_t^k\sigma_k\sigma_k^\tra\Pi_t^k+(\Pi_t^k+\Lambda_t^k\pmb{1}_{Kn \times n})\sigma_0\sigma_0^\tra(\Pi_t^k+\pmb{1}_{n \times Kn}(\Lambda_t^k)^\tra)\right]\Bigr\}x_t^idt\nnb\\&+d \Lambda_t^k\bar{x}_t+\Bigl\{\Pi_t^kF_k^\pi -\Pi_t^kB_kR_k^{-1}(B_k^\tra \Lambda_t^k-S_k^\tra H_k^{\pi})+\Lambda_t^k\bar{A}_t\nnb\\&+\frac{1}{\gamma_k}\left[\Pi_t^k\sigma_k\sigma_k^\tra\Lambda_t^k+(\Pi_t^k+\Lambda_t^k\pmb{1}_{Kn \times n})\sigma_0\sigma_0^\tra(\pmb{1}_{n \times Kn}\Delta_t^k+\Lambda_t^k)\right]\Bigr\}\bar{x}_tdt \nnb\\&+ d\Upsilon_t^k +\Bigl\{-\Pi_t^kB_kR_k^{-1}B_k^\tra \Upsilon_t^k+\Pi_t^kB_kR_k^{-1}S_k^\tra \eta_k +\Pi_t^kb_k(t)+\Lambda_t^k\bar{m}_t\nnb\\&+\frac{1}{\gamma_k}\left[\Pi_t^k\sigma_k\sigma_k^\tra\Upsilon_t^k+(\Pi_t^k+\Lambda_t^k\pmb{1}_{Kn \times n})\sigma_0\sigma_0^\tra(\Upsilon_t^k+\pmb{1}_{n \times Kn}\Gamma_t^k)\right]    \Bigr\}dt\nnb\\&+\Pi_t^k\sigma_kd\hat{w}_t^i+(\Pi_t^k+\Lambda_t^k\pmb{1}_{Kn \times n})\sigma_0d\hat{w}_t^0,\label{dp-sol1-neutral} \text{\hspace{5 mm}$\hat{\mathbb{P}}$--a.s.}
\end{align}
Since the two SDEs \eqref{dp-sol2-neutral} and \eqref{dp-sol1-neutral} that $p^i_t$ satisfies must align for every sample path of the Wiener processes, it is necessary for both the drift coefficients and the diffusion coefficients to be identical. Equating the drift coefficients of \eqref{dp-sol2-neutral} and \eqref{dp-sol1-neutral}, we have

\begin{align}
&\begin{cases}\label{Pi2}
    &d\Pi_t^k=\Biggl\{-\Pi_t^kA_k-A_k^\tra\Pi_t^k- Q_k+(\Pi_t^kB_k+S_k)R_k^{-1}(B_k^\tra\Pi_t^k+S_k^\tra)\\&\hspace{1.2cm}-\frac{1}{\gamma_k}\biggl[\Pi_t^k\sigma_k\sigma_k^\tra\Pi_t^k+(\Pi_t^k+\Lambda_t^k\pmb{1}_{Kn \times n})\sigma_0\sigma_0^\tra(\Pi_t^k+\pmb{1}_{n \times Kn}(\Lambda_t^k)^\tra)\biggr]\Biggr\}dt\\ 
    &\Pi_T^k=\hat{Q}_k\\
\end{cases}\allowdisplaybreaks\\
&\begin{cases}\label{Lambda2}
    &d \Lambda_t^k=\Biggl\{-\Pi_t^kF_k^\pi-\Lambda_t^k\bar{A}_t-A_k^\tra\Lambda_t^k+Q_kH_k^{\pi} +(\Pi_t^kB_k+S_k)R_k^{-1}(B_k^\tra \Lambda_t^k-S_k^\tra H_k^{\pi})\\&\hspace{1.2cm}-\frac{1}{\gamma_k}\biggl[\Pi_t^k\sigma_k\sigma_k^\tra\Lambda_t^k+(\Pi_t^k+\Lambda_t^k\pmb{1}_{Kn \times n})\sigma_0\sigma_0^\tra(\pmb{1}_{n \times Kn}\Delta_t^k+\Lambda_t^k)\biggr]\Biggr\}dt\\
    &\Lambda_T^k=-\hat{Q}_kH_k^\pi
\end{cases}\\
&\begin{cases}\label{Upsilon2}
    &d\Upsilon_t^k= \Biggl\{-\Pi_t^kb_k(t)-\Lambda_t^k \bar{m}_t-A_k^\tra \Upsilon_t^k+Q_k \eta_k+(\Pi_t^kB_k+S_k)R_k^{-1}(B_k^\tra \Upsilon_t^k-S_k^\tra \eta_k)\\&\hspace{1.2cm}-\frac{1}{\gamma_k}\biggl[\Pi_t^k\sigma_k\sigma_k^\tra\Upsilon_t^k+(\Pi_t^k+\Lambda_t^k\pmb{1}_{Kn \times n})\sigma_0\sigma_0^\tra(\Upsilon_t^k+\pmb{1}_{n \times Kn}\Gamma_t^k)\biggr]\Biggr\}dt\\ 
    &\Upsilon_T^k=-\hat{Q}_k\eta_k.
\end{cases}
\end{align}
By equating the diffusion coefficients of \eqref{dp-sol2-neutral} and \eqref{dp-sol1-neutral}, we obtain 
\begin{align}
    &\Pi_t^k\sigma_k=\exp{\left(-A_k^\tra t\right)} Z_t^i,\\
    &(\Pi_t^k+\Lambda_t^k\pmb{1}_{Kn \times n})\sigma_0=\exp{\left(-A_k^\tra t\right)} Z_t^0.
\end{align}
Now, our focus turns to characterizing $\Delta^k_t$, $\Gamma^k_t$, and $\Psi^k_t$. For the change of measure to be valid, and consequently, the obtained equations \eqref{Pi2}--\eqref{Upsilon2} to hold, it is essential to satisfy the condition \eqref{Condition change of measure}. To do so, we substitute the control action \eqref{optimal-u2-neutral}, the mean field of control actions \eqref{optimal-bar-u2-neutral}, and equations \eqref{Pi2}--\eqref{Upsilon2} into condition \eqref{Condition change of measure} under $\hat{\mathbb{P}}$, resulting in

\begin{align}
    \zeta(u)= &\int_0^T \Biggl(\frac{1}{2\gamma_k}(\bar{x}_t)^\tra (H_k^\pi)^\tra Q_kH_k^\pi \bar{x}_t+\frac{1}{\gamma_k}\eta^\tra_k Q_k H_k^\pi\bar{x}_t +\frac{1}{2\gamma_k}\eta^\tra_k Q_k \eta_k
    + \frac{1}{2\gamma_k}   \bigl[(\bar{x}_t)^\tra((H_k^{\pi})^\tra S_k \nnb\allowdisplaybreaks\\&- (\Lambda_t^k)^\tra B_k)  - (\Upsilon^k_t)^\tra B_k + (\eta_k)^\tra S_k \bigr]R_k^{-1} \bigl[(B_k^\tra\Lambda^k_t-S_k^\tra H_k^{\pi})\bar{x}_t+B_k^\tra\Upsilon^k_t-S_k^\tra \eta_k \bigr] \nnb\\& + \frac{1}{\gamma_k}\Bigl[(\bar{x}_t)^\tra((F_k^\pi)^\tra\Lambda_t^k + \Delta_t^k\bar{A}_t ) \bar{x}_t +((\Upsilon_t^k)^\tra F_k^\pi +(\Gamma_t^k)^\tra \bar{A}_t )\bar{x}_t) + (\bar{x}_t)^\tra (\Lambda_t^k)^\tra b_k(t) \nnb\\&+ (\bar{x}_t)^\tra \Delta_t^k \bar{m}_t+ (\Upsilon_t^k)^\tra b_k(t)+ (\Gamma_t^k)^\tra\bar{m}_t \Bigr] +\frac{1}{2\gamma_k^2}(\bar{x}_t)^\tra\Bigl( (\Lambda_t^k)^\tra\sigma_k\sigma_k^\tra\Lambda_t^k+((\Lambda_t^k)^\tra\nnb\\&+\Delta_t^k\pmb{1}_{Kn \times n})\sigma_0\sigma_0^\tra(\Lambda_t^k+\pmb{1}_{n \times Kn}\Delta_t^k) \Bigr) \bar{x}_t
    +\frac{1}{2\gamma_k^2}tr\Bigl((\pmb{\Sigma}^k)^\tra\pmb{H}_t^k\pmb{X}_t(\pmb{C}_t^k)^\tra\pmb{\Sigma}^k \nnb\\&+(\pmb{\Sigma}^k)^\tra\pmb{C}_t^k(\pmb{X}_t)^\tra\pmb{H}_t^k\pmb{\Sigma}^k+(\pmb{\Sigma}^k)^\tra\pmb{C}_t^k(\pmb{C}_t^k)^\tra\pmb{\Sigma}^k\Bigr) dt 
    +\frac{1}{2\gamma_k}\int_0^T tr(\sigma_k^\tra \Pi_t^k \sigma_k\nnb \\&+\sigma_0^\tra\left(\Pi_t^k+2\pmb{1}_{n \times Kn}(\Lambda_t^k)^\tra+\pmb{1}_{n \times Kn}\Delta_t^k\pmb{1}_{Kn \times n})\sigma_0\right)dt +\frac{1}{2\gamma_k} \int_0^T (\bar{x}_t)^\tra  d\Delta_t^k \bar{x}_t \nnb \\& +\frac{1}{\gamma_k}\int_0^T (d\Gamma_t^k)^\tra
   \bar{x}_t+\frac{1}{2 \gamma_k}\int_0^T d\Psi_t^k = 0, \text{\hspace{5 mm}$\hat{\mathbb{P}}$--a.s.}
\end{align}
To satisfy the above condition, we further impose the following constraints on the coefficients of the control action: 
\begin{align}
&\begin{cases}
    &d\Delta_t^k=\Biggl\{-(H_k^\pi)^\tra Q_k H_k^\pi +\Delta_t^k\bar{A}_t+\bar{A}_t^\tra\Delta_t^k-2(F_k^\pi)^\tra\Lambda_t^k\\&\hspace{1.2cm}+\left((\Lambda_t^k)^\tra B_k - (H_k^\pi)^\tra S_k\right)R_k^{-1}\left(B_k^\tra \Lambda_t^k - S_k^\tra H_k^\pi\right)\\&\hspace{1.2cm}-\frac{1}{\gamma_k}\biggl[(\Lambda_t^k)^\tra\sigma_k\sigma_k^\tra\Lambda_t^k+((\Lambda_t^k)^\tra+\Delta_t^k\pmb{1}_{Kn \times n})\sigma_0\sigma_0^\tra(\Lambda_t^k+\pmb{1}_{n \times Kn}\Delta_t^k)\biggr]\Biggr\}dt\\
    &\Delta_T^k=-(H_k^\pi)^\tra\Lambda_T^k
\end{cases}\allowdisplaybreaks\\
&\begin{cases}
    &d\Gamma_t^k=\Biggl\{-(H_k^\pi)^\tra Q_k \eta_k- (F_k^\pi)^\tra \Upsilon_t^k-(\Lambda_t^k)^\tra b_k(t) -\Delta_t^k\bar{m}_t-(\bar{A}_t)^\tra\Gamma_t^k \\&\hspace{1.2cm}+\left((\Lambda_t^k)^\tra B_k - (H_k^\pi)^\tra S_k\right)R_k^{-1}\left(B_k^\tra \Upsilon_t^k - S_k^\tra \eta_k \right)\\&\hspace{1.2cm}-\frac{1}{\gamma_k}\biggl[(\Lambda_t^k)^\tra\sigma_k\sigma_k^\tra\Upsilon_t^k+((\Lambda_t^k)^\tra+\Delta_t^k\pmb{1}_{Kn \times n})\sigma_0\sigma_0^\tra(\Upsilon_t^k+\pmb{1}_{n \times Kn}\Gamma_t^k)\biggr]\Biggr\}dt\\
    &\Gamma_T^k=-(H_k^\pi)^\tra\Upsilon_T^k
\end{cases}\allowdisplaybreaks\\
&\begin{cases}
    &d\Psi_t^k=\Biggl\{-\eta^\tra_k
    Q_k \eta_k - 2(\Upsilon_t^k)^\tra b_k(t) - 2 (\Gamma_t^k)^\tra\bar{m}_t- tr(\sigma_0^\tra(\Pi_t^k+2\pmb{1}_{n \times Kn}(\Lambda_t^k)^\tra\\&\hspace{1.2cm}+\pmb{1}_{n \times Kn}\Delta_t^k\pmb{1}_{Kn \times n})\sigma_0)-tr(\sigma_k^\tra \Pi_t^k \sigma_k) + ((\Upsilon_t^k)^\tra B_k - \eta^\tra_k S_k)R^{-1}_k(B_k^\tra\Upsilon_t^k-S_k^\tra \eta_k)\\&\hspace{1.2cm}-\frac{1}{\gamma_k}\biggl[(\Upsilon_t^k)^\tra\sigma_k\sigma_k^\tra\Upsilon_t^k+((\Upsilon_t^k)^\tra+(\Gamma_t^k)^\tra\pmb{1}_{Kn \times n})\sigma_0\sigma_0^\tra(\Upsilon_t^k+\pmb{1}_{n \times Kn}\Gamma_t^k)\biggr]\Biggr\}dt\\
    &\Psi_t^k=-\eta^\tra_k\Upsilon_T^k.
\end{cases}
\end{align}
\end{proof}

We have derived the optimal control under $\mathbb{\hat{P}}$. Now, we investigate its relationship with the optimal control action under the original measure $\mathbb{P}$.

\begin{theorem}\label{Optimal control risk sensitive}
Under the risk-sensitive measure $\mathbb{P}$, the optimal control action for the LQG risk-sensitive system, described by \eqref{dx_t}--\eqref{cost_comp_inf}, admits the linear state feedback representation 
\begin{equation}\label{optimal_u_under_P}
u_t^{i,*}=-R_k^{-1}\left[(B_k^\tra\Pi^k_t+S_k^\tra) x_t^i+(B_k^\tra\Lambda^k_t-S_k^\tra H_k^{\pi})\bar{x}_t+B_k^\tra\Upsilon^k_t-S_k^\tra \eta_k \right],
\end{equation}
where $\Pi^k_t$, $\Lambda^k_t$, and $\Upsilon^k_t$ are characterized by \eqref{Pi}--\eqref{Psi} given in Theorem \ref{Explicit solution for control}.
\end{theorem}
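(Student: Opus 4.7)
The plan is to leverage the chain of results already established: the Gâteaux derivative condition in Theorem \ref{Theorem of Gâteaux Derivative Expanded} gives a necessary optimality condition under $\mathbb{P}$ in the form \eqref{optimal-u_risk}, with the troublesome nonlinearity concentrated in the ratio $M_{2,t}^i/M_{1,t}^i$ from \eqref{M2/M1}; Theorem \ref{Girsanov exponent} shows that, provided $\zeta(u)=0$, the quantity $\exp(G_T^i(u)-\Theta_0)$ is the Radon--Nikodym density of an equivalent measure $\hat{\mathbb{P}}$; and Theorem \ref{Explicit solution for control} exhibits a linear state feedback law that simultaneously solves the transformed control equation \eqref{u - risk_neutral} under $\hat{\mathbb{P}}$ and satisfies the consistency condition \eqref{Condition change of measure}. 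What remains is simply to splice these together.

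First, I would start from the implicit optimality condition \eqref{optimal-u_risk} under $\mathbb{P}$ and multiply numerator and denominator of the ratio \eqref{M2/M1} by $\exp(-\Theta_0)$, which is admissible since $\Theta_0$ is a deterministic constant (Theorem \ref{Girsanov exponent}). Invoking the abstract Bayes' formula (as cited from \citet[Lemma 8.9.2]{StochasticIntegration}) then rewrites this quotient as the single conditional expectation \eqref{M under new measure} under $\hat{\mathbb{P}}$. Substituting back, the optimality condition under $\mathbb{P}$ reduces, $\hat{\mathbb{P}}$--a.s.\ (equivalently $\mathbb{P}$--a.s., by equivalence of the two measures), to the equation \eqref{u - risk_neutral}.

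Second, by Theorem \ref{Explicit solution for control}, the linear feedback law in \eqref{optimal_u_under_P} with $\Pi^k_t, \Lambda^k_t, \Upsilon^k_t$ (together with the auxiliary $\Delta^k_t, \Gamma^k_t, \Psi^k_t$) solving the Riccati-type system \eqref{Pi}--\eqref{Psi} is precisely the solution of \eqref{u - risk_neutral}, and moreover was constructed so that the constraint $\zeta(u)=0$ is satisfied along the closed-loop trajectory, validating the change of measure a posteriori. Hence the same feedback law satisfies the vanishing-Gâteaux-derivative condition under $\mathbb{P}$. Finally, Assumption \ref{ass:ConvCond} ensures the cost functional \eqref{cost integral_inf} is convex in $(x^i,u^i)$, so this necessary first-order condition is also sufficient, yielding global optimality of \eqref{optimal_u_under_P} under $\mathbb{P}$.

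The only delicate point, and what I would single out as the main obstacle to double-check, is the self-referential nature of the change of measure: the density $\exp(G_T^i(u)-\Theta_0)$ depends on the very control $u^{i,*}$ we are trying to characterize, and the validity of passing from \eqref{optimal-u_risk} to \eqref{u - risk_neutral} depends on $\zeta(u^{i,*})=0$. This is resolved by observing that Theorem \ref{Explicit solution for control} establishes both facts simultaneously: the Riccati equations \eqref{Pi}--\eqref{Psi} are derived as exactly the conditions under which the coefficient-matching in $\zeta(u^{i,*})$ cancels identically, so the candidate feedback is self-consistent. With this consistency in hand, the argument is just a one-line invocation of equivalence of $\mathbb{P}$ and $\hat{\mathbb{P}}$ plus convexity, and the statement follows.
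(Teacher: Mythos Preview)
Your proposal is correct and follows essentially the same approach as the paper: use the equivalence of $\mathbb{P}$ and $\hat{\mathbb{P}}$ established via Girsanov to transfer the $\hat{\mathbb{P}}$--a.s.\ linear feedback representation from Theorem \ref{Explicit solution for control} back to a $\mathbb{P}$--a.s.\ statement, then conclude optimality from the vanishing Gâteaux derivative together with convexity (Assumption \ref{ass:ConvCond}). The paper's proof is in fact terser---it simply writes out the null-set equivalence argument---whereas you spell out more carefully the self-consistency issue around $\zeta(u^{i,*})=0$ and the role of convexity, but the substance is the same.
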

\begin{proof}
Consider the sample space $\Omega$. Then, $u_t^{i,*}$ admits the  representation 
\begin{equation}
    u_t^{i,*}=-R_k^{-1}\left[(B_k^\tra\Pi^k_t+S_k^\tra) x_t^i+(B_k^\tra\Lambda^k_t-S_k^\tra H_k^{\pi})\bar{x}_t+B_k^\tra\Upsilon^k_t-S_k^\tra \eta_k \right], \text{\hspace{5 mm}$\hat{\mathbb{P}}$--a.s.}
\end{equation}
if and only if
\begin{equation}\label{a.s. equality in risk neutral}
    \hat{\mathbb{P}}\left(\left\{\nu |u_t^{i,*}(\nu) \neq -R_k^{-1}\left[(B_k^\tra\Pi^k_t+S_k^\tra) x_t^{i}(\nu)+(B_k^\tra\Lambda^k_t-S_k^\tra H_k^{\pi})\bar{x}_t(\nu)+B_k^\tra\Upsilon^k_t-S_k^\tra \eta_k \right]\right\}\right)= 0,
\end{equation}
where $\nu \in \Omega$ represents a state of the world.
By Girsanov theorem, $\hat{\mathbb{P}}$ is a measure equivalent to $\mathbb{P}$.
Thus, by the equivalence of measure, \eqref{a.s. equality in risk neutral} implies that
\begin{equation}
    \mathbb{P} \left(\left\{\nu |u_t^{i,*}(\nu) \neq -R_k^{-1}\left[(B_k^\tra\Pi^k_t+S_k^\tra) x_t^i(\nu)+(B_k^\tra\Lambda^k_t-S_k^\tra H_k^{\pi})\bar{x}_t(\nu)+B_k^\tra\Upsilon^k_t-S_k^\tra \eta_k \right]\right\}\right)= 0.
\end{equation}
Thus, under $\mb{P}$, the optimal control action admits the representation \eqref{optimal_u_under_P} with the respective control coefficients. In other words,  \eqref{optimal_u_under_P} makes the Gâteaux derivative \eqref{Gâteaux derivative} zero. Therefore, it is the optimal control action that minimizes the cost functional \eqref{cost integral_inf} given the dynamics of the system \eqref{dx_t}.
\end{proof}

\subsection{Nash Equilibrium}
\begin{definition}[Nash Equilibrium]
A set of strategies $\{u^i, i=1,2,\dots\} \in \mc{U}^1 \times \dots \times \mc{U}^N$ achieves the Nash equilibrium for all $N$ plays given the cost functional $J^i$ for each if for every agent $i \in \mfN$ with any admissible strategy $u \in \mc{U}^i$
\begin{equation}
    J^i(u^1,\dots,u^i,\dots,u^N) \leq J^i(u^1,\dots,u^{i-1},u,u^{i+1},\dots,u^N).
\end{equation}
\end{definition}

In other words, in the Nash equilibrium, no agent will be better off, specifically in this case, with a smaller cost, if it unilaterally deviates from the strategies established by the equilibrium.

\begin{theorem}
Consider the optimal control \eqref{optimal_u_under_P} obtained in Theorem \ref{Optimal control risk sensitive} for LQG risk-sensitive system. For the infinite-population model given the system described by the dynamics \eqref{dx_t} and the cost functional \eqref{cost integral_inf}, the set of the optimal controls $\{u^{i,*}, i=1,2,\dots\}$ for agents yields a Nash equilibrium.
\end{theorem}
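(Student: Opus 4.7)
The plan is to leverage two ingredients already established in the paper: (i) under Assumption \ref{ass:ConvCond}, the vanishing of the Gâteaux derivative obtained in Theorem \ref{Theorem of Gâteaux Derivative Expanded} is not merely a stationary condition but a global-minimum condition for $J^{i,\infty}$ against a \emph{fixed} mean field; and (ii) in the infinite-population limit a unilateral deviation by a single agent leaves the mean field unchanged, a fact that was already used inside the proof of Theorem \ref{Theorem of Gâteaux Derivative Expanded} (see the discussion around \eqref{MF_type_pert}). Combining these two observations yields the Nash property almost immediately, so the proof reduces to a careful bookkeeping argument rather than any new analytic estimate.

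Concretely, I would first fix an agent $i$ of type $k$ and suppose that every other agent $j\neq i$ plays the optimal control $u^{j,*}$ from Theorem \ref{Optimal control risk sensitive}. Let $u\in\mc{U}^i$ be any admissible deviation, and denote by $\bar{x}_t^{\mathrm{dev}}$ the resulting population mean field and by $\bar{x}_t^{*}$ the one induced when agent $i$ also plays $u^{i,*}$. Repeating the empirical-average argument around \eqref{MF_type_pert}, the contribution $\frac{1}{N_k}(x_t^{i,\mathrm{dev}}-x_t^{i,*})$ to the type-$k$ average vanishes in the limit $N_k\to\infty$, and the averages over the other types are entirely unaffected. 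Hence $\bar{x}_t^{\mathrm{dev}}=\bar{x}_t^{*}$ almost surely, so that agent $i$'s cost under the deviation is exactly $J^{i,\infty}(u)$ computed with the \emph{same} mean field $\bar{x}_t^{*}$ treated as an $\mc{F}_t^0$-adapted exogenous input.

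Second, I would invoke Assumption \ref{ass:ConvCond} together with the completion-of-squares identity displayed immediately after that assumption to conclude that $u\mapsto J^{i,\infty}(u)$ is convex on $\mc{U}^i$ once $\bar{x}_t^{*}$ is frozen: the running cost reduces to the sum of a quadratic in $(x_t^i-H_k^\pi\bar{x}_t^{*}-\eta_k)$ with nonnegative weight $Q_k-S_kR_k^{-1}S_k^\tra$ and a positive-definite quadratic in the shifted control, while $x_t^i$ is affine in $u$ through \eqref{x-state}; the log-expectation of an exponential of a convex functional is convex, so $J^{i,\infty}$ inherits convexity. Consequently, the first-order condition $\langle DJ^{i,\infty}(u^{i,*}),\omega^i\rangle=0$ satisfied by $u^{i,*}$ (Section \ref{sec: Optimal Control Action}) is sufficient for global optimality, giving $J^{i,\infty}(u^{i,*})\leq J^{i,\infty}(u)$ for every $u\in\mc{U}^i$, which is precisely the Nash inequality.

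The main obstacle is the careful justification of step (i): one must verify that the type-$k$ empirical average converges to the same deterministic limit $\bar{x}_t^{k}$ both under the equilibrium profile and under a unilateral deviation, despite the presence of the common noise $w^0$. This is handled by noting that the conditional expectation representation $\bar{x}_t^k=\mb{E}[x_t^{i,k}\mid\mc{F}_t^0]$ from Section \ref{sub: Mean Field Dynamics} depends only on the population's policy and not on a single agent's realization, so the deviation indeed has zero effect in the limit. Everything else reduces to applying convexity and the already-derived Gâteaux characterization; no fixed-point or consistency step beyond what has been carried out in Sections \ref{sec: Optimal Control Action}--\ref{sec: Linear feedback control} is required.
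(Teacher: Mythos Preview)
Your proposal is correct and follows essentially the same approach as the paper: the paper's proof likewise argues that a unilateral deviation in the infinite-population model leaves the mean field unchanged, and then invokes the optimality of $u^{i,*}$ against that fixed mean field (via Theorem \ref{Optimal control risk sensitive}) to conclude no cost reduction is possible. Your write-up is in fact more detailed than the paper's brief argument, as you spell out the convexity reasoning underpinning the sufficiency of the first-order condition.
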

\begin{proof}
    Considering that all agents adhere to the optimal strategies outlined in Theorem \ref{Optimal control risk sensitive}, we can establish the validity of the theorem statement. In situations where an individual agent $i$ chooses to diverge from the set of strategies unilaterally, the influence on the mean field will be insignificant. Consequently, on the one hand, this prompts the remaining agents to execute the original control, with the aim of minimizing the cost functional. On the other hand, as the mean field state is unchanged, by Theorem \ref{Optimal control risk sensitive}, the optimal control of the agent $i$ in question remains to be $u^{i,*}$. Therefore, any deviation of the agent $i$ from this optimal control $u^{i,*}$ will not lead to a cost reduction. 
\end{proof}
In this work, our focus is on the infinite-population scenario. The connection between the obtained Nash equilibrium strategies and the original finite-population system may be established by following along the lines of proof in \citep{liu2023}. More specifically, it can be shown that these strategies yield an approximate Nash ($\epsilon$-Nash) equilibrium for the finite-population system.

%% Write your chapter here.
\section{Application: An Interbank Market Model}\label{sec: Application}
In the context of the interbank market, we undertake a study utilizing the LQG risk-sensitive model introduced in Section \ref{sec: Infinite Population Model}. Our objective is to acquire a deeper understanding of the dynamics involved in interbank lending and borrowing. In this context, agents represent banks and their state represents the logarithmic monetary reserve (log-reserve) of the bank. A representative bank, driven by its financial requirements in different periods, engages in lending activities by purchasing bonds from the central bank and lending to other banks, or engages in borrowing activities with the central bank and other banks, all while striving to minimize operational costs. Within the same framework, the mean field state is illustrated by the limiting average of the log-reserves held by all banks participating in the market. Subsequently, we will henceforth denote this mean field state as the market state. In this section, we introduce a simplified version of the model to give an example. However, the general model can also be used similarly when there is a demand.

We consider log-reserves of banks and of the market and their control action to be scalars and reduce the dimension of the matrices by setting $K=r=1$. Consequently, the market exclusively comprises homogeneous banks sharing the same model parameters each subject to an idiosyncratic shock and a common noise. The common noise in each case can be viewed as the common impact of the market environment at a macro level on the banks. In this setting, the banks are correlated due to being impacted by the common noise as presented in Section \ref{sub: dynamics_infinite}. In addition, although independent of each other, the idiosyncratic and the common shocks will affect the banks by the same factor $\rho$. Consequently, the interplay between these shocks has a combined effect on the log-reserve of an individual bank and the market.

In this section, we begin by presenting an optimization problem in the context of interbank transactions. We consider first the model parameters,  presented in Section \ref{sec: Finite Population Model} and \ref{sec: Infinite Population Model}, as in Table \ref{Coefficients for Application}. As we are in a homogeneous setting, we consider the same $\sigma$ as part of the multiplier for both individual and market shock.
\begin{table}[h]
    \centering
    \label{Model parameters in the interbank market model}
    \begin{adjustbox}{width=\textwidth}
        \begin{tabular}{|*{12}{c|}}
        \hline
        General model & $A_k$  & $F_k$ &  $B_k$ & $H_k$ & $\eta_k$ & $\hQ_k$   &$Q_k$ &$S_k$ & $R_k$ &$\sigma_0$ &$\sigma_k$\\
        \hline
        Interbank market model & $-a$  & $a $   & $1$    & 1  & $0$     & $\hat{q}$ &$q$  &$\xi$  &$1$ &$\sigma \rho$ &$\sigma \sqrt{1-\rho^2}$\\
        \hline
        \end{tabular}
    \end{adjustbox}
    \caption{Model parameters in the interbank market model.}
    \label{Coefficients for Application}
\end{table}
Then, we provide an interpretation for each parameter based on \citet{Carmona2013} and \citet{YY2023}. Next, we introduce the solution to the problem based on the theorems presented in Section \ref{sec: sols to inf}. We solve the system of control coefficients numerically and provide an analytical solution for a simpler case. Subsequently, we define the total and conditional default probability and proceed to address it utilizing respectively the classical and stochastic Fokker-Planck equations, drawing inspirations from \citet{wille2004new} and \citet{Carmona2013}, by considering the first hitting time of the market and agent state falling below a default threshold. Next, we employ the forward explicit finite differences method to tackle the probability of default concerning both the individual bank and the entire market. Then, we examine the influence of parameter variations on the probabilities of default. Notably, we consider the effects of the common factor $\rho$, risk-sensitivity, and liquidity parameters on the reserve of the bank and of the market at equilibrium. In the end, a comprehensive analysis of the bank's conditional probability of default will follow, considering the presence of two distinct trajectories of common noise. 

The terminology employed in this section pertains to interbank transactions. Specifically, the concepts of lending and borrowing from the central bank correspond to the acquisition and sale, respectively, of government-issued bonds. Moreover, the transaction rate denotes the controlled measures that a bank undertakes in this process to effectively manage reserve prerequisites, enhance liquidity, and fulfill regulatory mandates.

Remark that despite our efforts to obtain data for parameter calibration, we were unable to access the necessary information due to the confidentiality protocols regarding monetary reserves held by various institutions. Consequently, we will assign values to parameters inspired from \citet{Carmona2013} in the application sections.

\subsection{Finite-Population Model}
\subsubsection{Dynamics}\label{sub: dynamics}
On the probability space $(\Omega, \pmb{F}, (\mathcal{F}^{[N]}_t)_{t\in \mfT}, \mathbb{P})$, for bank $i, i \in \mfN$, the finite population dynamics is given as
\begin{equation}\label{dx_t_app}
    dx_t^i=\{a (x_t^{[N]}-x_t^i)+u_t^i+b(t)\}dt+\sigma \sqrt{1- \rho^2} dw_t^i+\sigma \rho dw_t^0
\end{equation}
where $t \in \mfT$. We denote the variable $x_t^i \in \mathbb{R}$ as the log-reserve of the bank at the time $t$. The transaction rate $u_t^i \in \mathbb{R}$ represents the money that the bank lends to or borrows from the central bank during the market activity at each time $t$. As in the general model, the market shock is characterized by $w_t^0 \in \mathbb{R}$ which is independent of the shock received by the bank $w_t^i \in \mathbb{R}$ through $t \in \mfT$.The average log-reserve of all the banks in the market at the time $t$ represents the market state and is captured by $x_t^{[N]} \in \mathbb{R}$ with dynamics
\begin{equation}\label{aver_dyn_app}
    d x_t^{[N]} = (u_t^{[N]}  + b(t))dt + \frac{\sigma \sqrt{1- \rho^2}}{N}\sum_{i \in \mc{I}} dw_t^i+ \sigma \rho dw^0_t
\end{equation}
\begin{equation}
x_t^{[N]}=\frac{1}{N}\sum_{i \in \mc{I}} x_t^i,\quad
u_t^{[N]}=\frac{1}{N}\sum_{i \in \mc{I}} u_t^i .
\end{equation} 

In addition, the parameter $a \in \mathbb{R}$ is the mean reversion rate of the bank's reserve towards the market state. The liquidity of the bank before market activity at each time $t$ is represented by $b(t)$. The volatility of the log-reserve of the bank with respect to its own local shock (underlying uncertainty source) is denoted by $\sigma \rho \in \mathbb{R}$. The volatility of the log-reserve with respect to the global shock that affects the market (i.e. the macroeconomic factors), is characterized by $\sigma \sqrt{1-\rho^2} \in \mathbb{R}$. As can be seen from above equation, an instantaneous coefficient $0 \leq \rho \leq 1$ is a common multiplier factor for the shock delivered by the bank itself and by the environment.

In addition, the equivalent assumptions and $\sigma$-fields as for the general model in Section \ref{sec: Finite Population Model} are considered.

\subsubsection{Cost Functional}\label{sub:cost functional fin}
The operational cost of a representative bank to be minimized is modeled by the functional 
\begin{equation}\label{Cost Functional_app_finite}
  J^{i,[N]} = \gamma\log\mb{E} \left\{\exp\left(\frac{1}{\gamma}\bigl(g(x_T^i,x_T^{[N]})+\int_0^Tf(x^i,x_t^{[N]},u_t^i)dt\bigr)\right)\right\} 
\end{equation} where 
\begin{equation}
    g(x_T^i,\bar{x}_T)=\frac{1}{2} (x_T^{[N]}-x_T^i)^2 \hat{q}
\end{equation}
\begin{equation} 
    f(x^i,x_t^{[N]},u_t^i)=\frac{1}{2} \left\{ (x_t^{[N]}-x_t^i)^2 q-2(x_t^{[N]} -x_t^i) \xi u_t^i+(u_t^i)^2 \right\}
\end{equation}
with $\hat{q}$, $q$, $\xi \in \mathbb{R}$.

The costs specified for the bank are composed of the terminal $ g(x_T^i,x_T^{[N]})$ and running $f(x^i,x_t^{[N]},u_t^i)$ costs. The degree of risk-sensitivity for bank-$i$ is represented by $\tfrac{1}{\gamma} \in (0,\infty)$ and models a risk-averse behaviour. Specifically, the larger $\tfrac{1}{\gamma}$, the more risk-averse is the bank. In the limit, where $\tfrac{1}{\gamma} \rightarrow 0$, the cost functional reduces to a risk-neutral one. The terminal cost consists of only a quadratic term associated with the risk undertaken in connection with the market state at the time $T$. There are three running cost components associated with the state of the bank and the market state as well as the control action at time $t$. When the log-reserve of the bank significantly differs from the market state, the penalty for deviation is conveyed through the quadratic cost $(x_t^{[N]}-x_t^i)^2 q$. The bank's incentive to borrow from or lend to the central bank in relation to the market state is modeled by $-2(x_t^{[N]} -x_t^i) \xi u_t^i$. Remark that $\xi > 0$ represents the bank's borrowing or lending fees for the adjustments in the monetary reserve, guided by the control $u_t^i$. In other words, if $x_t^{[N]}>x_t^i$, the bank wishes to have $u_t^i > 0$ (i.e. borrowing money). Then, the borrowing cost will be added to the running cost (i.e. $-2(x_t^{[N]} -x_t^i) \xi u_t^i > 0$).  If $x_t^{[N]}<x_t^i$, the bank wishes to have $u_t^i < 0$ (i.e. lending money). Subsequently, the gain from lending will be deduced from the running cost  (i.e. $-2(x_t^{[N]}-x_t^i) \xi u_t^i < 0$). The transaction cost or market friction is modeled by the quadratic term $(u_t^i)^2$.

In short, through the trading horizon $\mfT$, a representative bank wants to minimize its expected cost \eqref{Cost Functional_app_finite} while being risk-averse and its log-reserve is governed by \eqref{dx_t_app}.

\subsection{Infinite-Population Model}
\subsubsection{Dynamics}\label{sub: dynamics_infinite}
In the infinite population limit, where $N \rightarrow \infty$ (see Section \ref{sec: Infinite Population Model}), the log-reserve of the bank  $i \in \mfN$ at the time $t$ satisfies
\begin{equation}\label{dx_inf_app} 
    dx_t^i=\{a (\bar{x}_t-x_t^i)+u_t^i+b(t)\}dt+\sigma \sqrt{1- \rho^2} dw_t^i+\sigma \rho dw_t^0
\end{equation}
where the mean field, $\bar{x}_t = \lim_{N \rightarrow \infty} \tfrac{1}{N}\sum_{i \in \mc{I}} x_t^i$, represents the limiting market state satisfying 
\begin{equation}\label{dMF_inf_app} 
d\bar{x}_t = (\bar{u}_t  + b(t))dt +\sigma \rho dw_t^0
\end{equation}
with 
\begin{equation}
\bar{u}_t=\lim_{N \rightarrow \infty}\frac{1}{N}\sum_{i \in \mc{I}} u_t^i.
\end{equation} 
From this point forward, we will refer the state $\bar{x}_t$ as the market state. Other coefficients and variables are the same as the ones defined in Section \ref{sub: dynamics}.

Remark that for two banks in the market, bank-$i$ and bank-$j$ with $i,j \in \mfN$ such that $ i \neq j$. As the Brownian motions $w_t^i$, $w_t^j$ and $w_t^0$ are independent of each other but the bank states $x_t^i$ and $x_t^j$ are influenced by the same common noise $w_t^0$, $corr(x_t^i,x_t^j)=(\sigma\rho)^2$. In other words, the banks are correlated. In addition, for any bank-$i$, $i \in \mfN$, $corr(x_t^i,\bar{x}_t)=(\sigma\rho)^2$.

The same assumptions and $\sigma$-fields as for the general model in Section \ref{sec: Infinite Population Model} in dimension-reduced form are considered. 

\subsubsection{Cost Functional}
The operational cost of a representative bank that needs to be minimized is structured using the identical parameters and variables as described in \ref{sub:cost functional fin}. The only alteration is the substitution of the state $x_t^{[N]}$ with the market one $\bar{x}_t$ to account for the scenario involving an infinite population of small banks. This cost is represented by the functional 
\begin{equation}\label{Cost Functional_app}
   J^{i,\infty} = \gamma\log\mb{E} \left\{\exp\left(\frac{1}{\gamma}\bigl(g(x_T^i,\bar{x}_T)+\int_0^Tf(x^i,\bar{x}_t,u_t^i)dt\bigr)\right)\right\} 
\end{equation} where 
\begin{equation}
    g(x_T^i,\bar{x}_T)=\frac{1}{2} (\bar{x}_T-x_T^i)^2 \hat{q}
\end{equation}
\begin{equation} 
    f(x^i,\bar{x}_t,u_t^i)=\frac{1}{2} \left\{ (\bar{x}_t-x_t^i)^2 q-2(\bar{x}_t-x_t^i) \xi u_t^i+(u_t^i)^2 \right\}.
\end{equation}

In order to ensure the convexity of the cost functional, we impose the equivalent conditions as in Assumption \ref{ass:ConvCond}, i.e.
\begin{equation}
    \hat{q}\geq0,\quad q-\xi^2\geq0.
\end{equation}

\subsection{Optimal Transaction Rate for Infinite-Population Model}
From Theorem \ref{Optimal control risk sensitive} and the model described by \eqref{dx_t_app}, \eqref{dMF_inf_app} and \eqref{Cost Functional_app}, the optimal transaction rates $\{u^{i,*}, i=1,2,\dots\}$ for individual banks achieving a Nash equilibrium are characterized by
\begin{align}\label{u_app}
    u_t^{i,*}=-\left[(\Pi_t+\xi) x_t^i+(\Lambda_t-\xi)\bar{x}_t+\Upsilon_t \right], \text{\hspace{5 mm}$i \in \mfN$}
\end{align}
where 
\begin{align}
&\begin{cases}\label{Pi_app}
    &d\Pi_t=\Bigl\{\left(1-\frac{\sigma^2}{\gamma}\right)\Pi_t^2+\left(2a+2\xi-\frac{2}{\gamma}(\sigma \rho)^2\Lambda_t\right)\Pi_t\\&\hspace{1.2cm}-\frac{1}{\gamma}(\sigma \rho)^2(\Lambda_t)^2+\xi^2-q\Bigr\}dt\\
    &\Pi_T=\hat{q}\\
\end{cases}\\
&\begin{cases}\label{Lambda_app}
    &d \Lambda_t=\Bigl\{(1-\frac{1}{\gamma}(\sigma \rho)^2)\Lambda_t^2+\left( a+2\Pi_t+\xi -\frac{\sigma^2}{\gamma} \Pi_t-\frac{1}{\gamma}(\sigma \rho)^2 \Delta_t \right)\Lambda_t\\&\hspace{1.2cm}-\bigl(\xi +a +\frac{1}{\gamma}(\sigma \rho)^2\Delta_t\bigr)\Pi_t+q-\xi^2\Bigr\}dt\\
    &\Lambda_T=-\hat{q} 
\end{cases}\\
&\begin{cases}\label{Upsilon_app}
    &d\Upsilon_t= \Bigl\{\left(\Pi_t+\Lambda_t+a+\xi-\frac{\sigma^2}{\gamma} \Pi_t-\frac{1}{\gamma}(\sigma \rho)^2 \Lambda_t\right)\Upsilon_t-(\frac{1}{\gamma}(\sigma \rho)^2\Gamma_t+b(t))\Pi_t\\&\hspace{1.2cm}-(\frac{1}{\gamma}(\sigma \rho)^2\Gamma_t+b(t))\Lambda_t \Bigr\}dt\\ 
    &\Upsilon_T=0
\end{cases}\\
&\begin{cases}\label{Delta_app}
    &d\Delta_t=\Bigl\{-\frac{1}{\gamma}(\sigma \rho)^2\Delta_t^2- 
    2 (\Pi_t + \Lambda_t +\frac{1}{\gamma}(\sigma \rho)^2 \Lambda_t)\Delta_t+(1-\frac{\sigma^2}{\gamma})\Lambda_t^2\\&\hspace{1.2cm} - 2(
    \xi +a ) \Lambda_t- q + \xi^2\Bigr\}dt\\
    &\Delta_T=-\Lambda_T
\end{cases}\\
&\begin{cases}\label{Gamma_app}
    &d\Gamma_t=\Bigl\{\left(\Pi_t + \Lambda_t  - \frac{1}{\gamma}(\sigma\rho)^2(\Lambda_t+\Delta_t)\right)\Gamma_t+\Bigl(\Upsilon_t - b(t) -\frac{\sigma^2}{\gamma}\Upsilon_t \Bigr)\Lambda_t \\&\hspace{1.2cm}+ \Bigl(- \xi - a  +\Delta_t-\frac{1}{\gamma}(\sigma \rho)^2\Delta_t\Bigr)\Upsilon_t   -b(t)\Delta_t \Bigr\}dt\\
    &\Gamma_T=0
\end{cases}\\
&\begin{cases}\label{Psi_app}
    &d\Psi_t=\Bigl\{\left(1-\frac{\sigma^2}{\gamma}\right)\Upsilon_t^2-\frac{1}{\gamma}(\sigma \rho)^2\Gamma_t^2-\sigma^2 \Pi_t-2(\sigma\rho)^2\Lambda_t\\&\hspace{1.2cm}+2\left(\Gamma_t -\frac{1}{\gamma}(\sigma \rho)^2\Gamma_t-b(t)\right)\Upsilon_t-(\sigma \rho)^2\Delta_t-2b(t)\Gamma_t \Bigr\}dt\\
    &\Psi_T=0.
\end{cases}
\end{align}
The resulting market transaction rate $\bar{u}_t^*$ in the infinite-population model is given by
\begin{align}\label{u_bar_app}
    \bar{u}_t^* =-\left[(\Pi_t+\xi) \bar{x}_t+(\Lambda_t-\xi )\bar{x}_t+\Upsilon_t\right].
\end{align}
Consequently, the following dynamics for individual banks and the market state emerge
\begin{equation}\label{dx_op}
    dx_t^i=\{ \left( a  -\Lambda_t + \xi  \right) \bar{x}_t-\left(a+\Pi_t+\xi\right)x_t^i-\Upsilon_t +b(t)\}dt+\sigma \sqrt{1- \rho^2} dw_t^i+\sigma \rho dw_t^0
\end{equation}
\begin{equation}\label{dMF_op}
    d\bar{x}_t = (\bar{A}_t\bar{x}_t + \bar{m}_t)dt+\sigma \rho d w_t^0
\end{equation}
where
\begin{align}
    \bar{A}_t&=-\Pi_t- \Lambda_t \label{A_bar_app}\\
    \bar{m}_t&=b(t)-\Upsilon_t.
\end{align}

We provide an example of a simplified optimization problem and provide analytically the optimal transaction rate of the bank and of the market.

\subsubsection{Analytical Solutions for a Specific Scenario}\label{simplified ex}
It is interesting to explore the analytical solution to a special case of the model under consideration. We will give an example there. Consider the question with parameters of value $1$ except $a = 10$ and we are interested in the analytical solution of the optimal transaction rate of the bank and of the market.

Consider the dynamics of the bank as
\begin{equation}\label{dx_t_app2}
    dx_t^i=\{10(\bar{x}_t-x_t^i)+u_t^i+1\}dt+dw_t^0
\end{equation}
\begin{equation}\label{dMF_t_app2}
d\bar{x}_t = (\bar{u}_t  + 1)dt +dw^0_t
\end{equation}
where
\begin{equation}
\bar{x}_t=\frac{1}{N}\sum_{i \in I} x_t^i \in \mathbb{R},\quad
\bar{u}_t=\frac{1}{N}\sum_{i \in I} u_t^i \in \mathbb{R}.
\end{equation}
Moreover, consider the cost functional is given by
\begin{equation}\label{Cost Functional_app2}
    \lim_{N\rightarrow \infty} J^{i,[N]} = \log\mb{E} \left\{\exp\left(\bigl(g(x_T^i,\bar{x}_T)+\int_0^Tf(x^i,\bar{x}_t,u_t^i)dt\bigr)\right)\right\} 
\end{equation} where 
\begin{equation}
    g(x_T^i,\bar{x}_T)=\frac{1}{2} (\bar{x}_T-x_T^i)^2 
\end{equation}
\begin{equation} 
    f(x^i,\bar{x}_t,u_t^i)=\frac{1}{2} \left\{ (\bar{x}_t-x_t^i)^2-2(\bar{x}_t-x_t^i) u_t^i+(u_t^i)^2 \right\}.
\end{equation}
\begin{proposition}
The optimal control of the LQG risk-sensitive system with the dynamics \eqref{dx_t_app2} and the cost functional \eqref{Cost Functional_app2} is given by
\begin{align}\label{u_t with values}
    u_t^{i,*} = \left(1-\frac{22\exp{(22t)}}{\exp{(22t)}-23\exp{(22)}}\right)(\bar{x}_t-x_t^i).
\end{align}
\end{proposition}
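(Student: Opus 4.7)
My plan is to specialize Theorem \ref{Optimal control risk sensitive} to the scalar setting of this example and then integrate the resulting Riccati system in closed form. With $K=n=m=r=1$, $a=10$, $F=a=10$, $B=H=R=\hat q=q=\xi=\gamma=1$, $\eta=0$, $b(t)\equiv 1$, $T=1$, $\sigma_k=0$, $\sigma_0=1$, the feedback law of Theorem \ref{Optimal control risk sensitive} collapses to
\begin{equation*}
u_t^{i,*}=-(\Pi_t+1)\,x_t^i-(\Lambda_t-1)\,\bar x_t-\Upsilon_t,
\end{equation*}
so matching against the stated expression $f(t)(\bar x_t-x_t^i)$ reduces to identifying $\Pi_t$ explicitly and verifying $\Lambda_t\equiv -\Pi_t$ and $\Upsilon_t\equiv 0$, in which case $f(t)=1+\Pi_t$.

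The first step is to substitute the parameter values into equations \eqref{Pi_app}--\eqref{Upsilon_app}. Because $\sigma^2/\gamma=1$, the pure $\Pi_t^2$ term in \eqref{Pi_app} drops out, and the terminal data satisfy $\Lambda_T=-\hat q=-\Pi_T$. These observations motivate the ansatz $\Lambda_t\equiv -\Pi_t$, which I would check for self-consistency by inserting it into the simplified $\Lambda$-equation and verifying that the right-hand side reproduces $-d\Pi_t$ obtained from the simplified $\Pi$-equation; a short computation shows that the $\Delta_t$-dependent pieces cancel, so the ansatz is consistent without having to solve for $\Delta_t$. Once $\Pi_t+\Lambda_t\equiv 0$ is established, the $\Upsilon$-equation collapses to the linear homogeneous ODE $\dot\Upsilon_t=11\,\Upsilon_t$ with $\Upsilon_T=0$, whose unique solution is $\Upsilon_t\equiv 0$.

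With the ansatz in hand, the remaining task is the scalar autonomous Riccati initial-value problem $\dot\Pi_t=\Pi_t^2+22\Pi_t$ on $[0,1]$ with $\Pi_1=1$. I would integrate it by separation of variables and the partial fraction decomposition $\tfrac{1}{\Pi(\Pi+22)}=\tfrac{1}{22}\bigl(\tfrac{1}{\Pi}-\tfrac{1}{\Pi+22}\bigr)$, use the terminal condition to fix the constant of integration, and obtain $\Pi_t=-\tfrac{22\,e^{22t}}{e^{22t}-23\,e^{22}}$. Substituting $\Pi_t$, $\Lambda_t=-\Pi_t$, $\Upsilon_t=0$ back into the feedback formula yields $u_t^{i,*}=(1+\Pi_t)(\bar x_t-x_t^i)$, which is precisely the announced expression. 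I expect the only conceptually nontrivial step to be recognizing the symmetry ansatz $\Lambda_t=-\Pi_t$ that decouples the $\Delta_t$-coupling in \eqref{Lambda_app}; after that, the Riccati integration and bookkeeping are routine.
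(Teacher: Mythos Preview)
Your proposal is correct and follows essentially the same route as the paper: specialize \eqref{u_app} to the given parameter values, observe that the ansatz $\Lambda_t=-\Pi_t$ is self-consistent (the $\Delta_t$ coupling cancels), reduce to the scalar Riccati $\dot\Pi_t=\Pi_t^2+22\Pi_t$ with $\Pi_T=1$, integrate it by separation of variables, and verify $\Upsilon_t\equiv 0$ from the resulting linear homogeneous ODE. If anything, you are slightly more explicit than the paper about why the $\Lambda_t=-\Pi_t$ ansatz closes without needing $\Delta_t$; the paper simply asserts ``we can see that $\Pi_t=-\Lambda_t$.''
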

\begin{proof}
Considering the optimal control of the bank based on the equation \eqref{u_app} with defined parameters, namely
\begin{align}
    u_t^{i,*}=-\left[(\Pi_t+1) x_t^i+(\Lambda_t-1)\bar{x}_t+\Upsilon_t \right].
\end{align}
Based on the Section \ref{Optimal control risk sensitive} and the parameters defined in this specific case, for the system of ordinary differential equations (ODES) for control coefficients $\Pi_t, \Lambda_t, \Upsilon_t, \Delta_t, \Gamma_t$ and $\Psi_t$, we can see that $\Pi_t=-\Lambda_t$ leading
\begin{align}
&\begin{cases}
    &d\Pi_t=-d\Lambda_t=22\Pi_t+\Pi_t^2dt\\
    &\Pi_T=-\Lambda_T=1.
\end{cases}
\end{align}
By solving this ODE,
\begin{align}
    \Pi_t=\frac{-22\exp{(22c_1+22t)}}{\exp{(22c_1+22t)}-1}, c_1\in \mathbb{R}.
\end{align}
We can then solve $c_1$ by considering the terminal condition
\begin{align}
    \Pi_T=\frac{-22\exp{(22c_1+22T)}}{\exp{(22c_1+22T)}-1}=1.
\end{align}
We obtain
\begin{align}
    \Pi_t=\frac{-22\exp{(22t)}}{\exp{(22t)}-23\exp{(22)}}.
\end{align}
For $\Upsilon_t$,
\begin{align}
&\begin{cases}
    &d\Upsilon_t=11\Upsilon_tdt\\
    &\Upsilon_T=0.
\end{cases}
\end{align}
However, when solving the above ODE, we obtain
\begin{align}
    \Upsilon_t=c_2 \exp{(11t)},\,\, c_2 \in \mathbb{R}
\end{align}
which at terminal time, $T$, is equal to
\begin{align}
    \Upsilon_T=c_2 \exp{(11T)}=0.
\end{align}
Thus, $c_2=0$ and $\Upsilon_t=0$.\\
As a result, the optimal control is
\begin{align}\label{u_t with values simplified}
    u_t^{i,*}&= (\Pi_t+1)(\bar{x}_t-x_t^i)\nnb
           \\& = \left(1-\frac{22\exp{(22t)}}{\exp{(22t)}-23\exp{(22)}}\right)(\bar{x}_t-x_t^i).
\end{align}
\end{proof}

The rest of the paper delves into the analysis of the likelihood of default concerning both the bank's and the market's log-reserve in the equilibrium which we refer thereby as the individual and systemic defaults. The interdependence of the banks is articulated in the Section \ref{sub: dynamics_infinite}. The correlation between banks imposes a risk to the entire market, identified as the systemic risk. Namely, the systemic risk refers to the probability of the market default given such relationship between banks. This scrutiny is supplemented by analyzing the effects of various model parameters on the default probability. Additionally, the influence of particular trajectories of common noise on default is showcased.

\subsection{Individual Default and Systemic Risk}\label{sec: Individual Default and Systemic Risk}

In this section, we investigate the default probability of a representative bank $i \in \mfN$ and the systemic risk. We first define these notions by the likelihood of the respective states dipping below a specific threshold based on \citet{Carmona2013}. We first derive the Fokker-Planck equation that the respective probability density function satisfies in each case based on \citet{e2019applied} and \citet{Carmona2013}. Then, to compute the default probabilities, we use the analysis of first hitting time and the obtained Fokker-Planck equations. We refer to \citet{wille2004new} for the calculation of the default probability of a general diffusion process using this method. 

\subsubsection{Definition of Default Probability and First Hitting Time}\label{def: default}

The default event can be interpreted as an occurrence wherein either the market or the agent fails to fulfill the minimum reserve requirements stipulated by the regulator or the conditions necessary to sustain the functionality of daily operations. As \citet{Carmona2013}, we consider the same constant default threshold for both the market and the agent. In this context, the market default can also be seen as the default of a representative bank that holds the limiting average of the log-reserves of all banks.

We define the probability of a systemic default event as the likelihood of the minimum market state, governed by the dynamics described in equation \eqref{dMF_op}, falling below the default threshold $\theta$ over the time horizon $\mfT$ as 
\begin{equation}\label{def: MFD}
    \mathds{P}(\min_{0 \leq t \leq T} \bar{x}_t \leq \theta).
\end{equation}
We define the probability of the default event of bank-$i$ as the likelihood of the bank's log-reserve, governed by the dynamics described in equation \eqref{dx_op}, falling below the default threshold $\theta$ over the time horizon $\mfT$ as
\begin{equation}\label{def: XD}
    \mathds{P}(\min_{0 \leq t \leq T} x_t^i \leq \theta).
\end{equation}
We define the conditional probability of the default event of bank-$i$ as the likelihood of the bank's log-reserve, governed by the dynamics described in equation \eqref{dx_op}, falling below the default threshold $\theta$ over the time horizon $\mfT$  given $(\mathcal{F}_t^{0})_{t\in \mfT}$ as  
\begin{equation}\label{def: XCD}
    \mathds{P}(\min_{0 \leq t \leq T} \bar{x}_t \leq \theta | \mathcal{F}_t^0).
\end{equation}
In this scenario, we will conduct an in-depth analysis of the individual default probability while considering a specific trajectory of common noise. This probability provides a clearer insight into the default event of bank-$i$ within the context of observed market shocks. 

Over the time horizon $\mfT$, the event that the minimum of the set of states governed by the corresponding dynamics falls below the threshold $\theta$ is equivalent to the first hitting time of the state when it reaches the predefined threshold $\theta$ \citep{wille2004new}.
Let us define the first hitting time for bank-$i$ as $t^*_{x^i} \coloneqq \min_{x^i_t = \theta} t$. Then, we have
\begin{equation}
    \mathds{P}(\min_{0 \leq t \leq T} x_t^i \leq \theta)=\mathds{P}(t^*_{\bar{x}} \leq T).
\end{equation}Similarly, we define the first hitting time for the mean-field as $t^*_{\bar{x}}\coloneqq \min_{\bar{x}_t = \theta} t$. The equivalent probability for the systemic event is then given by
\begin{equation}
    \mathds{P}(\min_{0 \leq t \leq T} \bar{x}_t \leq \theta) = \mathds{P}(t^*_x \leq T).
\end{equation}
The conditional default probability of a representative bank given $(\mathcal{F}_t^{0})_{t\in \mfT}$ is equvalently expressed as in 
\begin{equation}
    \mathds{P}(\min_{0 \leq t \leq T} x_t^i \leq \theta | \mathcal{F}_T^0)=\mathds{P}(t^*_{x_t^i} \leq T | \mathcal{F}_T^0).
\end{equation}

\subsubsection{Fokker-Planck Equation for Systemic Risk} 
The probability of market default is considered first, and then a similar approach is applied to analyze the probability of default for the individual bank. The analysis begins by investigating through the time horizon $\mfT$ the event of the minimum market state reaching a certain value at a specific time to determine the probability of the first hitting time. If the minimum market state reaches the predetermined threshold, the default event occurs. The probability of the default may be computed from the survival probability density function $\bar{p}(\bar{x},t)$ which captures the event in which the market default is not occurred through out the time horizon $\mfT$. In order to find $\bar{p}(\bar{x},t)$, as $\bar{x}_t$ is stochastic we employ the Fokker-Planck method  based on \citet{wille2004new}, where this method is used to calculate the probability of default of a diffusion process.

We solve first the Fokker-Planck partial differential equation (PDE) with respective boundaries for the probability of the systemic survival described as
\begin{align}
    \frac{\partial \bar{p}(\bar{x},t)}{\partial t} & = - \frac{\partial}{\partial\bar{x}}[(\bar{A}_t \bar{x} + b(t) - \Upsilon_t) \bar{p}(\bar{x},t)] + \frac{(\sigma \rho)^2}{2} \frac{\partial^2 \bar{p}(\bar{x},t)}{\partial \bar{x}^2}\nnb \\
     &= -\bar{A}_t \frac{\partial}{\partial\bar{x}}[ \bar{p}(\bar{x},t)]-(\bar{A}_t \bar{x} + b(t) - \Upsilon_t) \frac{\partial}{\partial\bar{x}}[ \bar{p}(\bar{x},t)] + \frac{(\sigma \rho)^2}{2} \frac{\partial^2 \bar{p}(\bar{x},t)}{\partial \bar{x}^2}. \label{FPMF}
\end{align}

 We consider the absorbing boundaries allowing $p(\bar{x},t)$ to vanish if it breaks the threshold. Moreover, we impose $\mathds{P}(\bar{x}=\infty) = 0$ almost surely. In addition, we define the boundary condition at initial time $t=0$ according to a standard normal distribution, denoted as $\mc{N}(0,1)$. Hence, the boundaries are 
\begin{equation}\label{FP MF boundaries}
    \bar{p}(\theta,t)=0, \hspace{1cm} 
    \bar{p}(\infty,t)=0, \hspace{1cm} 
    \bar{p}(\bar{x},0) \sim \mc{N}(0,1)\text{\hspace{1mm} with $\bar{x} \in (\theta, \infty)$}.
\end{equation}

It should be noted that the existence of the probability density function $p(\bar{x}_t,t)$ assumes that the market state does not break the threshold at time $t$. Therefore, the probability of the event that the first hitting time is beyond $T$ can be determined by integrating $p(\bar{x}_t, T)$ over all possible $\bar{x}$ within the boundary of existence. Hence,
\begin{equation}\label{last_eq_MFP}
    \mathds{P}(t^*_{\bar{x}} > T) = \int_a^\infty  \bar{p}(\bar{x},T) d\bar{x}.
\end{equation}

Hence, the probability of the event that the first hitting time is within the time interval $\mfT$ is given by 
\begin{align}\label{last_step_MFFP}
    \mathds{P}(t^*_{\bar{x}} \leq T) & = 1 - \int_a^\infty  \bar{p}(\bar{x},T) d\bar{x}.
\end{align}

\subsubsection{Fokker-Planck Equation for Individual Default Probability}
The probability of default of a representative bank can be solved in a similar way. We consider the event of the bank's state reaching a certain value at a specific time to determine the probability of the first hitting time. To keep notation concise, we adopt a matrix representation. The joint dynamics of bank $i$, \eqref{dx_op}-\eqref{dMF_op}, and the market state \eqref{dMF_op} is given by 
\begin{align}
    d\pmb{X}_t^i &= \begin{bmatrix} \pmb{\upsilon}_1 \\ \pmb{\upsilon}_2\end{bmatrix}+ \pmb{\Sigma} d\pmb{W}_t^i
\end{align}
where
\begin{equation}
\begin{bmatrix} \pmb{\upsilon}_1 \\ \pmb{\upsilon}_2\end{bmatrix} =\begin{bmatrix} -\Pi_t- a -\xi  && -\Lambda_t + a +\xi \\
                0 && -\Pi_t - \Lambda_t
\end{bmatrix}
\begin{bmatrix} x^i\\\bar{x}\end{bmatrix}+\begin{bmatrix} b(t)-\Upsilon_t \\ b(t)-\Upsilon_t \end{bmatrix}
\end{equation}
\begin{equation}
\pmb{\Sigma}=
\begin{bmatrix}\sigma \sqrt{(1-\rho^2)} && \sigma \rho\\
               0 && \sigma \rho 
\end{bmatrix},
\quad
\pmb{W}_t^i 
=\begin{bmatrix} w^i_t \\ w^0_t\end{bmatrix}.
\end{equation}

The analysis begins by examining the joint state of bank-$i$ and the market, denoted by $\pmb{X}^i$, reaching a certain set of values at a specific time to determine the probability of the first hitting time. The distribution of this state is described by the survival probability density function $p(\pmb{X}^i,t)$ satisfying the Fokker-Planck equation
\begin{align}
    \frac{\partial p(\pmb{X}^i,t)}{\partial t} &= - \frac{\partial \pmb{\upsilon}_1 p(\pmb{X}^i,t)}{\partial x^i} - \frac{\partial \pmb{\upsilon}_2 p(\pmb{X}^i,t)}{\partial \bar{x}} \nnb\\&+ \frac{1}{2}\Bigl\{\sigma^2 \frac{\partial^2p(\pmb{X}^i,t)}{\partial (x^i)^2}+\sigma^2 \rho^2\frac{\partial^2p(\pmb{X}^i,t)}{\partial (x^i)(\bar{x})}+\sigma^2 \rho^2\frac{\partial^2p(\pmb{X}^i,t)}{\partial (\bar{x})(x^i)}+ \sigma^2 \rho^2 \frac{\partial^2p(\pmb{X}^i,t)}{\partial \bar{x}^2} \Bigr\} \label{FPA}
\end{align}
subject to the boundary conditions 
\begin{equation*}
    p\left(\begin{bmatrix}
        \theta \\
        \bar{x}
    \end{bmatrix},t\right)=0, \text{\hspace{3cm}}
    p\left(\begin{bmatrix}
        \infty \\
        \infty
    \end{bmatrix},t\right)=0,
\end{equation*}
\begin{equation}\label{FPX boundaries}
    p(\pmb{X},0) \sim \mc{N} \left(\begin{bmatrix}
        0 \\0
    \end{bmatrix},\begin{bmatrix}
        1&0 \\0 & 1\end{bmatrix} \right) \text{\hspace{1mm} with $(x,\bar{x}) \in (\theta, \infty)\times(-\infty,\infty)$}.
\end{equation}

We consider the absorbing boundaries make $p(\pmb{X}^i,t)$ vanish if it breaks the threshold. Moreover, we impose $\mathds{P}(\pmb{X}^i=\infty) = 0$ almost surely. 
The boundary condition at initial time, $t=0$, is defined according to a bivariate standard normal distribution with a zero correlation matrix.

We note that the existence of the probability density function $p(\pmb{X}^i,t)$ assumes that $\pmb{X}^i$ does not break the threshold at time $t$. Therefore, the probability of the bank's survival given a specific market state at time $T$ can be determined by integrating $p(\pmb{X}^i,T)$ over all possible $\bar{x}$ within the boundary of existence. Hence,
\begin{align}
    p(x^i,T) & =\int_{-\infty}^{\infty} p(\pmb{X}^i,T) d{\bar{x}}.
\end{align}

Following a similar procedure as used for determining the market default probability, we can determine the probability of the bank experiencing default within the time interval $\mathcal{T}$ as
\begin{align}\label{last_step_XFP}
    \mathds{P}(t^*_{x^i} \leq T) &= 1 - \int_a^\infty  p(x^i,T) d{x^i}.
\end{align}

\subsubsection{Stochastic Fokker-Planck Equation for Individual Default Probability under Specific Common Shock}
The conditional probability of default of a representative bank consists of analyzing the default event given the common noise. The distribution of the conditional default of the bank may be calculated using the survival probability density function $p(x^i,t | w^0_t)$, which, in turn, can be computed using the Fokker-Planck method as in the previous section. However, rather than examining a classical PDE as discussed in the previous section, our focus now shifts to solving a stochastic PDE to take the filtration $(\mathcal{F}_t^{0})_{t\in \mfT}$ into consideration.

For the agent's dynamics \eqref{dx_op} with the optimal control \eqref{u_app}, the stochastic Fokker-Planck equation generating  $p(x^i ,t | w^0_t)$ is given by 
\begin{align}
    \partial p(x^i,t | \mathcal{F}_t^0) = \biggl\{&- \frac{\partial\{(-\xi-a-\Pi_t)x^i+(a-\Lambda_t+\xi )\bar{x}_t+b(t)-\Upsilon_t\}p(x^i,t | \mathcal{F}_t^0)}{\partial x_t^i}  \nnb \\&+ \frac{\sigma^2(1-\rho^2)}{2} \frac{\partial^2 p(x^i,t | \mathcal{F}_t^0)}{\partial (x_t^i)^2}\biggr\} dt- \sigma^2 \rho^2 \frac{\partial{p(x^i,t | \mathcal{F}_t^0)}}{\partial{x^i}} dw_t^0 \label{FPCA}
\end{align}
with the boundary conditions 
\begin{equation}\label{CFP boundaries}
    p(\theta,t | \mathcal{F}_t^0)=0, \hspace{0.5cm} 
    p(\infty,t | \mathcal{F}_t^0)=0, \hspace{0.5cm} 
    p(x^i,0 | \mathcal{F}_0^0) \sim \mc{N} (0,1) \text{\hspace{1mm} with $x \in (\theta, \infty)$}.
\end{equation}

Following a similar procedure as in previous sections, the conditional probability of the bank being defaulted within the time interval $\mfT$ is computed via
\begin{align}\label{last_step_CXFP}
    \mathds{P}(t^*_{x^i} \leq T | \mathcal{F}_T^0) &= 1 - \int_a^\infty  p(x^i,T | \mathcal{F}_T^0) d{x^i}.
\end{align}

\subsection{Numerical Experiments}\label{numerical methods}
Given the complexity inherent in specifying the probability of default based on the Fokker-Plack equations, we employ numerical techniques to adeptly tackle various aspects. Thisincludes solving the system of ODEs that the coefficients of optimal control satisfy and discerning both systemic and bank-specific conditional and unconditional defaults. We use numerical solutions to find the probability of default using the Fokker-Planck equations. Additionally, we carry out a sensitivity analysis by integrating coefficient values into the equation.

\subsubsection{Numerical Method for Control Coefficients}\label{NM Control}
To achive this goal, we  utilize the discretization of the time interval $\mathcal{T}$ into smaller segments $\pmb{\Delta t}$. Then, for each coefficient of the optimal control (i.e. $\Pi_t,\Lambda_t,\Upsilon_t,\Delta_t,\Gamma_t$ and $\Psi_t$), we discretize the respective ODE (i.e. \eqref{Pi_app}-\eqref{Psi_app}). For example, the discretization of the ODE that $\Pi_t$ satisfy is given by 
\begin{align}
&\begin{cases}
    &\frac{\Pi_{\pmb{\Delta (t+1)}}-\Pi_{\pmb{\Delta t}}}{\pmb{\Delta t}}=  \left(1-\frac{\sigma^2}{\gamma}\right)\Pi_{\pmb{\Delta t}}^2+\left(2a+2\xi-\frac{2}{\gamma}(\sigma \rho)^2\Lambda_{\pmb{\Delta t}}\right)\Pi_{\pmb{\Delta t}}-\frac{1}{\gamma}(\sigma \rho)^2(\Lambda_{\pmb{\Delta t}})^2+\xi^2-q\\
    &\Pi_T=\hat{q}.
\end{cases}
\end{align}
As the six ODEs, that the control coefficients satisfy, are coupled with each other, we solve a system of six ODEs to $\Pi_t,\Lambda_t,\Upsilon_t,\Delta_t,\Gamma_t,\Psi_t$. Specifically, we use backward differentiation with Python library \texttt{solve\_ivp} in \texttt{scipy.integrate}.

\subsubsection{Numerical Method for Fokker-Planck Equations}\label{FPEPD_D}
In order to solve the partial differential equations \eqref{FPMF}, \eqref{FPA} and \eqref{FPCA}, we need to first discretize them. To this purpose, we employ the forward explicit finite differences method. The probability of default is then calculated using numerical methods for integration.
\begin{itemize}
    \item[1.] \textit{Systemic Risk} 
    
    To solve for the probability of the market default \eqref{def: MFD} using the finite differences method, we employ a two-dimensional grid defined over the underlying variables time $t$ and market state $\bar{x}$. We discretize these variables within ranges $[t_0=0, \pmb{\Delta t},2\pmb{\Delta t},\dots,T]$ and $[\theta,\theta+\pmb{\Delta \bar{x}},\dots,\theta+\pmb{\bar{M} \Delta \bar{x}}]$, respectively, where $\pmb{\bar{M}} \in \mathbb{N}$ is chosen to be sufficiently large and the discretization of the variables $t$ and $\bar{x}$ are sufficiently small. At each grid point, we denote the probability as $\bar{p}_{\pmb{j}}^{\pmb{i}}$, where $\pmb{i} \in \mathbb{N}$ indicates the time position $\pmb{i} \pmb{\Delta t}$ and $\pmb{j} \in \mathbb{N}$  denotes the market state position  $\pmb{j} \pmb{\Delta \bar{x}}$. Consider the Fokker-Planck equation for the systemic survival \eqref{FPMF}, its respective discretization is
    \begin{align}\label{FPMFD}
        \bar{p}^{\pmb{i}}_{\pmb{j}}=\bar{p}^{\pmb{i}-1}_{\pmb{j}}\nnb&+\pmb{\Delta t}\Bigg\{\left(-\bar{A}^{\pmb{i}-1}(1+\bar{x}_{\pmb{j}})- b^{\pmb{i}-1} + \Upsilon^{\pmb{i}-1}\right) \left(\frac{\bar{p}^{\pmb{i}-1}_{\pmb{j}+1}-\bar{p}^{\pmb{i}-1}_{\pmb{j}-1}}{2\pmb{\Delta \bar{x}}} \right)\\&+\frac{(\sigma \rho)^2}{2}\left(\frac{\bar{p}^{\pmb{i}-1}_{\pmb{j}+1}-2\bar{p}^{\pmb{i}-1}_{\pmb{j}}+\bar{p}^{\pmb{i}-1}_{\pmb{j}-1}}{\pmb{\Delta \bar{x}}^2} \right)\Bigg\}
    \end{align}
    where $\bar{A}^{\pmb{i}}=-\Pi^{\pmb{i}} - \Lambda^{\pmb{i}} $. Remark that $\bar{A}$ depends only on time. The forward method begins with the initial point $\bar{p}^0_{\pmb{j}}$ which follows a standard normal distribution $\mc{N} (0,1)$ restricted on the space generated by the market state $(\theta,\theta+\pmb{\bar{M} \Delta \bar{x}}]$. Remark that in order to satisfy the absorbing condition at the threshold, we consider $\bar{p}^{\pmb{i}}_{\theta} = 0$ for all ${\pmb{i}}$, representing condition $\bar{p}(\theta,t)=0$. Then, the probability $\bar{p}$ is incremented at each time and market state step up to the end of the time horizon $T$.
    
    \item[2.] \textit{Default Probability of Bank-$i$} 
    
    To simplify the notation, the individual bank state will be denoted as $x$. To solve for the probability of the individual bank default \eqref{def: XD} using the finite differences method, we employ a three-dimensional grid defined over the underlying variables time $t$, bank state $x$ and market state $\bar{x}$. We discretize these variables within their respective as  $[t_0=0, \pmb{\Delta t},2\pmb{\Delta t},\dots,T]$, $[\theta,\theta+\pmb{\Delta x},\dots,\theta+\pmb{N_x \Delta x}]$ and the last one
    $[-\pmb{\bar{M} \Delta \bar{x}},(-\pmb{(\bar{M}+1) \Delta \bar{x}},\dots,\pmb{\bar{M} \Delta \bar{x}}]$, where $\pmb{N_x},\pmb{\bar{M}} \in \mathbb{N}$ are chosen to be sufficiently large the discretization of the variables $t$, $x$  and $\bar{x}$ are sufficiently small. At each grid point, we denote the probability as $p^{\pmb{i}}_{\pmb{j},\pmb{m}}$, where $\pmb{i}$ indicates the time position $\pmb{i}\pmb{\Delta t}$, $\pmb{j} \in \mathbb{N}$ denotes the bank state position  $\pmb{j}\pmb{\Delta x}$ and $\pmb{m}$ denotes the market state position  $\pmb{m}\pmb{\Delta \bar{x}}$. Subsequently, the discretization of the Fokker-Planck equation \eqref{FPA} that the default probability of an individual bank satisfies is given by
    \begin{align}\label{FPAD}
        p^{\pmb{i}}_{\pmb{j},\pmb{m}}&=p^{\pmb{i}-1}_{\pmb{j},\pmb{m}}+\pmb{\Delta t}\Biggl\{\Bigl(\left(\Pi^{\pmb{i}-1} + a +\xi\right)(1+x_{\pmb{j}})+\left(\Lambda^{\pmb{i}-1} - a -\xi \right)\bar{x}_{\pmb{m}}- b^{\pmb{i}-1} + \Upsilon^{\pmb{i}-1}\Bigr) \nnb\\& \times \left(\frac{ p^{\pmb{i}-1}_{{\pmb{j}+1},\pmb{m}}- p^{\pmb{i}-1}_{{\pmb{j}-1},\pmb{m}}}{2\pmb{\Delta \bar{x}}} \right)+\left(-\bar{A}^{\pmb{i}-1}(1+\bar{x}_{\pmb{m}})- b^{\pmb{i}-1} + \Upsilon^{\pmb{i}-1}\right) \left(\frac{p^{\pmb{i}-1}_{\pmb{j},\pmb{m}+1}-p^{\pmb{i}-1}_{\pmb{j},\pmb{m}-1}}{2\pmb{\Delta \bar{x}}} \right)\nnb\\&+\frac{1}{2}\biggr\{\sigma^2\left(\frac{p^{\pmb{i}-1}_{\pmb{j}+1,\pmb{m}}-2p^{\pmb{i}-1}_{\pmb{j},\pmb{m}}+p^{\pmb{i}-1}_{\pmb{j}-1,\pmb{m}}}{\pmb{\Delta \bar{x}}^2}\right)+\sigma^2\rho^2 \left(\frac{p^{\pmb{i}-1}_{\pmb{j},\pmb{m}+1}-2p^{\pmb{i}-1}_{\pmb{j},\pmb{m}}+p^{\pmb{i}-1}_{\pmb{j},\pmb{m}-1}}{\pmb{\Delta \bar{x}}^2}\right)\nnb\\&+2\sigma^2\rho^2 \left(\frac{p^{\pmb{i}-1}_{\pmb{j}+1,\pmb{m}+1}-p^{\pmb{i}-1}_{\pmb{j}+1,\pmb{m}-1}-p^{\pmb{i}-1}_{\pmb{j}-1,\pmb{m}+1}+p^{\pmb{i}-1}_{\pmb{j}-1,\pmb{m}-1}}{4\pmb{\Delta \bar{x}}\pmb{\Delta x}}\right)\biggl\}\Biggr\}
    \end{align}
    where $\bar{A}^{\pmb{i}}=-\Pi^{\pmb{i}} - \Lambda^{\pmb{i}}$. The forward method begins with the initial point $p^0_{\pmb{j},\pmb{m}}$ which follows a standard bivariate normal distribution $\mc{N} \left(\begin{bmatrix} 0 \\0 \end{bmatrix},\begin{bmatrix}
            1&0 \\0 & 1\end{bmatrix} \right)$ restricted on space generated by the market and bank's state, namely  $(\theta,\theta+\pmb{\Delta x},\dots,\theta+\pmb{N_x \Delta x}] \times [-\pmb{\bar{M} \Delta \bar{x}},(-\pmb{\bar{M}+1) \Delta \bar{x}},\dots,\pmb{\bar{M} \Delta \bar{x}}]$. Remark that in order to satisfy the absorbing condition at the threshold, we consider $p^{\pmb{i}}_{\theta, \pmb{m}} = 0$ for all ${\pmb{i}}$ and ${\pmb{m}}$, representing condition $p(\begin{bmatrix}
        \theta \\
        \bar{x}
    \end{bmatrix},t)=0$. Then, the probability $p$ is incremented at each time, bank state and market state step up to the end of the time horizon $T$.
    
    \item[3.] \textit{Conditional Default Probability of Bank-$i$ given a Specific Common Shock}
    
    To solve the probability of conditional bank default \eqref{def: XCD} using the finite differences method, we employ a two-dimensional grid defined over the underlying variables time $t$ and bank state $x$ under the filtration $(\mathcal{F}_t^{0})_{t\in \mfT}$. We discretize these variables within ranges $[t_0=0, \pmb{\Delta t},2\pmb{\Delta t},\dots,T]$ and $[\theta,\theta+\pmb{\Delta x},\dots,\theta+\pmb{N_x \Delta x}]$ respectively, where $\pmb{N_x} \in \mathbb{N}$ is chosen to be sufficiently large and the discretization of the variables $t$ and $x$ are sufficiently small. At each grid point, we denote the probability as $(p_{|\mathcal{F}_{\pmb{i}}^0})^{\pmb{i}}_{\pmb{j}}$, where $\pmb{i}$ indicates the time position $\pmb{i}\pmb{\Delta t}$ and $\pmb{j}$ denotes the bank state position  $\pmb{j}\pmb{\Delta x}$. In this specific case, as the common noise $w^0_t$, namely $w^{0, \pmb{i}\pmb{\Delta t}}$ in the discretization, is known at time $\pmb{i}\pmb{\Delta t}$, we consider the market state at each time $\pmb{i}\pmb{\Delta t}$ for all $x^{\pmb{i}\pmb{\Delta t}}$ from the discretization of its dynamics.
    \begin{equation}
        \bar{x}^{\pmb{i}\pmb{\Delta t}}=\bar{x}^{(\pmb{i}-1)\pmb{\Delta t}}+ (\bar{A}^{(\pmb{i}-1)\pmb{\Delta t}}\bar{x}^{(\pmb{i}-1)\pmb{\Delta t}} + \bar{m}^{(\pmb{i}-1)\pmb{\Delta t}})\pmb{\Delta t}+\sigma \rho w^{0, \pmb{\Delta t}} \text{\hspace{5 mm} for all $i$}
    \end{equation}
    where
    \begin{align}
        \bar{A}^{(\pmb{i}-1)\pmb{\Delta t}}&=-\Pi^{(\pmb{i}-1)\pmb{\Delta t}} - \Lambda^{(\pmb{i}-1)\pmb{\Delta t}} \\
        \bar{m}^{(\pmb{i}-1)\pmb{\Delta t}}&=b^{(\pmb{i}-1)\pmb{\Delta t}}-\Upsilon^{(\pmb{i}-1)\pmb{\Delta t}}
    \end{align}
    with starting point $\bar{x}^0$. For simplicity, we denote $\bar{x}^{\pmb{i}\pmb{\Delta t}}$ as $\bar{x}^{\pmb{i}}$ in the following text. Consider the Fokker-Planck equation for the conditional bank's survival \eqref{FPCA}, its respective discretization is
    \begin{align}\label{FPACD}
        (p_{|\mathcal{F}_{\pmb{i}}^0})^{\pmb{i}}_{\pmb{j}}&=(p_{|\mathcal{F}_{\pmb{i}-1}^0})^{\pmb{i}-1}_{\pmb{j}}\nnb\\&+\pmb{\Delta t}\Biggl\{\left(\left(\Pi^{\pmb{i}-1} + a +\xi\right)(1+x_{\pmb{j}})+\left(\Lambda^{\pmb{i}-1} - a -\xi \right)\bar{x}^{\pmb{i}-1}- b^{\pmb{i}-1} + \Upsilon^{\pmb{i}-1}\right) \nnb\\&\times \left(\frac{(p_{|\mathcal{F}_{\pmb{i}-1}^0})^{\pmb{i}-1}_{\pmb{j}+1}- (p_{|\mathcal{F}_{\pmb{i}-1}^0})^{\pmb{i}-1}_{\pmb{j}-1}}{2\pmb{\Delta x}} \right)- \sigma^2 \rho^2 \frac{{(p_{|\mathcal{F}_{\pmb{i}-1}^0})^{\pmb{i}-1}_{\pmb{j}+1}}-(p_{|\mathcal{F}_{\pmb{i}-1}^0})^{\pmb{i}-1}_{\pmb{j}-1}}{2\pmb{\Delta x}} w^{0, \pmb{\Delta t}}\\&+\frac{\sigma^2(1-\rho^2)\left((p_{|\mathcal{F}_{\pmb{i}-1}^0})^{\pmb{i}-1}_{\pmb{j}+1}-2(p_{|\mathcal{F}_{\pmb{i}-1}^0})^{{\pmb{i}}-1}_{\pmb{j}}+(p_{|\mathcal{F}_{\pmb{i}-1}^0})^{\pmb{i}-1}_{\pmb{j}-1}\right)}{2\pmb{\Delta \bar{x}}^2}\Biggr\}.
    \end{align}
    
    The forward method begins with the initial point $(p_{|\mathcal{F}_{\pmb{0}}^0})^{\pmb{0}}_{\pmb{j}}$ which follows a standard normal distribution $\mc{N} (0,1)$ restricted on the space generated by bank's state $(\theta,\theta+\pmb{\Delta x},\dots,\theta+\pmb{N_x \Delta x}]$. Remark that in order to satisfy the absorbing condition at the threshold, we consider $(p_{|\mathcal{F}_{\pmb{i}}^0})^{\pmb{i}}_{\theta} = 0$ for all ${\pmb{i}}$, representing condition $p(\theta,t | \mathcal{F}_t^0)=0$. Then, the probability $(p_{|\mathcal{F}_{\pmb{i}}^0})$ is incremented at each time and each bank's state step up to the end of the time horizon $T$.
\end{itemize}
        
Note that the forward explicit finite differences method represents a straightforward yet inherently unstable approach for discretizing and solving PDEs. This instability arises from its tendency to amplify small discretization errors as they propagate across the grid. Achieving reliable outcomes necessitates employing a finer grid. Especially, a more refined time discretization is crucial. Consequently, emphasizing the significance of adopting a meticulously crafted grid becomes paramount. Alternative numerical techniques, such as implicit finite differences or the alternating directions implicit method, may be useful for enhancing the stability and accuracy of implementations \citep{PMB2013}.

The default probability for the three cases is calculated by evaluating the incremented probability at time $T$ and employing the trapezoidal rule across the generated grid with respect to the relevant variables with the use of \texttt{numpy.trapz}. The trapezoidal rule is a numerical method to approximate the integral using left and right Riemann sums over the probability curve. The default probability is retrieved at the end by performing a subtraction as described in equations \eqref{last_step_MFFP}, \eqref{last_step_XFP} and \eqref{last_step_CXFP}.

\subsubsection{Results and Interpretations: Systemic and Individual Default Probability}\label{SIDP}

In this section, we conduct a comprehensive analysis on the impact of various parameters on systemic and individual default probabilities based on the outcomes generated by numerical methods. These tests specifically pertain to unconditional default probabilities defined in Section \ref{def: default}. The baseline scenario is defined by the parameter values $ \sigma = 0.3,\, \rho = 0.4,\, \xi = 1,\, q = 10,\, \gamma = 0.2,\, a = 2.5,\, b(t) = 1,$ for all t, $\hat{q} = 0,$ the default threshold $\theta = -0.7$, and the time horizon $T=0.25$. Remark that due to the requirement of the appropriate grid as mentioned in Section \ref{FPEPD_D}, finding the probability given an extended time horizon requires a finer grid and thus higher computational time. We choose this restrained time horizon to reduce the processing time. The results presented in this subsection and the next one have been carefully selected through rigorous testing of various parameters and grid settings, identifying the appropriate grid configuration for the given parameters, and thereby ensuring the attainment of stable outputs. We present three cases using the parameters of the baseline scenario in which we change one of the parameters for the numerical analysis unless otherwise mentioned.
\\
\begin{itemize}
    \item[Case 1] \textit{Impact of Correlation Coefficient $\rho$} 
    
    The magnitude of the shocks that affect both the reserve of the market and bank-$i,\, i\in \mfN$, is expressed by $\sigma$. For the market reserve, this magnitude is multiplied by a factor of $\rho$. For the bank-$i$ reserve, the magnitude is multiplied by $\rho$ for the common shocks and $\sqrt{1-\rho^2}$ for the idiosyncratic shocks. As $\rho$ increases, the impact of the common noise on the overall market increases, leading to a higher probability of the market default. While this effect on the market is present, the impact of the idiosyncratic shock on the bank decreases as the associated multiplier of this shock is $\sqrt{1-\rho^2}$. Thus, the common and the idiosyncratic shocks affect the probability of individual default simultaneously and differently. In addition, as demonstrated in Section \ref{sub: dynamics_infinite}, the correlation between banks is quantified by the factor $(\sigma \rho)^2$. On the one hand, as the parameter $\rho$ is increasing while maintaining other parameters constant, banks exhibit higher degrees of correlation among themselves. In consequence, the default of one individual bank will lead to a more probable market default. On the other hand, when $\rho$ is large, the bank is subject to a lower individual risk but a higher systemic risk. However, because of the strong correlation, this common market risk is shared more extensively among banks. The current question revolves around determining the extent of systemic risk that individual banks are exposed to after sharing. The key consideration is whether the benefits of risk sharing outweigh the challenges posed by a potentially more volatile market environment on banks. Based on the numerical analysis, we observe that given other parameters as in the baseline scenario but with $\sigma=0.2$, as $\rho$ increases, the probability of the systemic default increases while the bank's default probability decreases as shown in Figure \ref{fig: Sensitivity Test over rho for Unconditional Probability Default}. In this scenario, effective sharing of systemic risk among agents occurs when the correlation coefficient $\rho$ is high. Moreover, along with reduced individual risk, there is a decrease in the likelihood of individual default. Finally, we observe that a higher risk-aversion degree, i.e. $\tfrac{1}{\gamma} = 80$, reduces the systemic risk for any correlation strength among agents.\\ \\
    
    \begin{figure}[h]
        \centering
        \begin{adjustbox}{width=\textwidth}
            \includegraphics{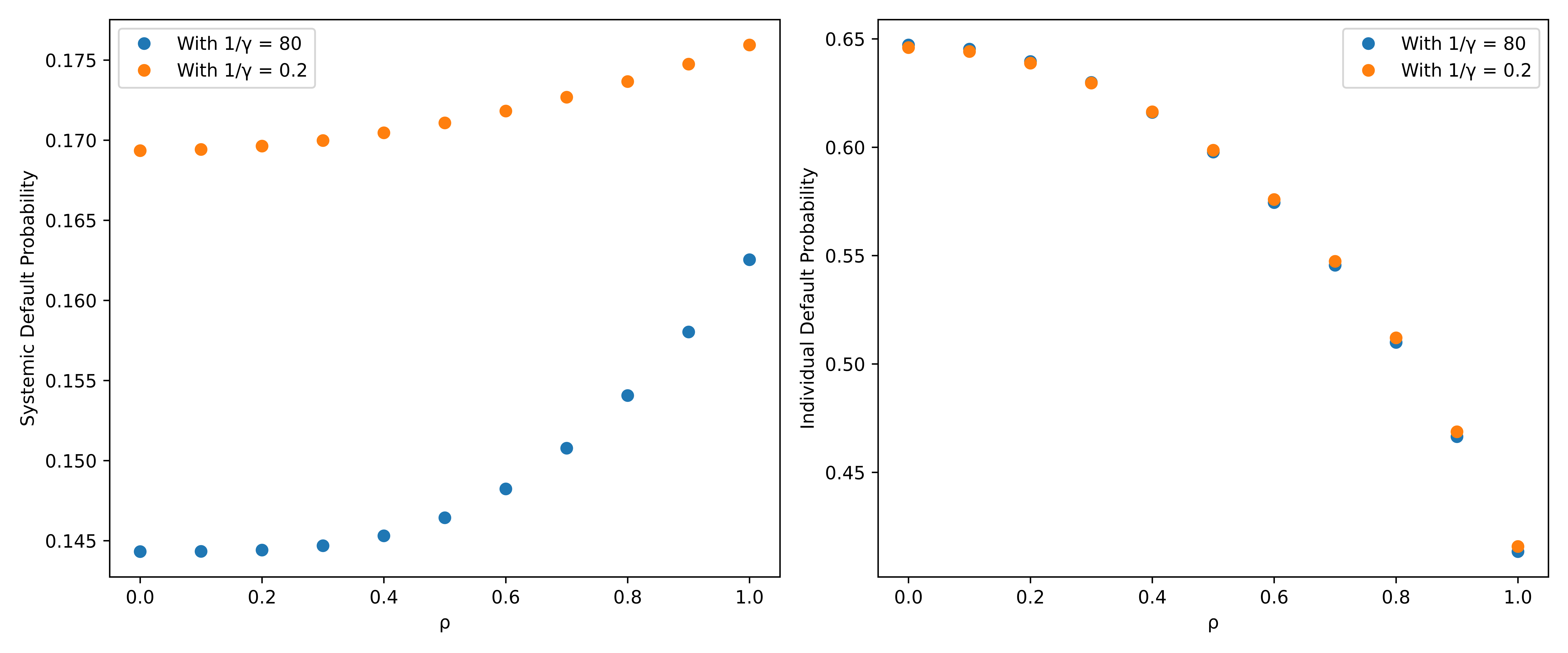}
        \end{adjustbox}
        \caption{The impact of correlation coefficient $\rho$ on individual and systemic default probabilities for two degrees of risk sensitivity $\tfrac{1}{\gamma} = 0.2,\, 80$, with the parameter values $ \sigma = 0.2,\, \rho = 0.4,\, \xi = 1,\, q = 10,\, \gamma = 0.2,\, a = 2.5,\, b(t) = 1,$ for all t, $\hat{q} = 0,$ the default threshold $\theta = -0.7$, and the time horizon $T=0.25$.}
        \label{fig: Sensitivity Test over rho for Unconditional Probability Default}
    \end{figure}
    \item[Case 2] \textit{Impact of Risk-Sensitivity Degree $\tfrac{1}{\gamma}$}
    
    The degree of risk sensitivity of a representative bank is expressed by $\tfrac{1}{\gamma}$. When $\tfrac{1}{\gamma} > 0$, the bank is risk-averse. In addition, the value of $\tfrac{1}{\gamma}$ expresses the magnitude of the risk aversion. Thus, a large $\tfrac{1}{\gamma}$ characterizes the behavior of the bank as excessively risk averse. As shown in Figure \ref{fig: Sensitivity Test over Inversed Gamma for Unconditional Probability Default} simulated from the baseline scenario but with changing $\tfrac{1}{\gamma}$, the probability of individual default diminishes when the bank exhibits a higher risk-aversion. As a result, for the market setup under study, where the banks share the same risk-aversion degree, the probability of systemic default follows a similar pattern and decreases by risk-aversion.
    
    \begin{figure}[h]
        \centering
        \begin{adjustbox}{width=\textwidth}
            \includegraphics{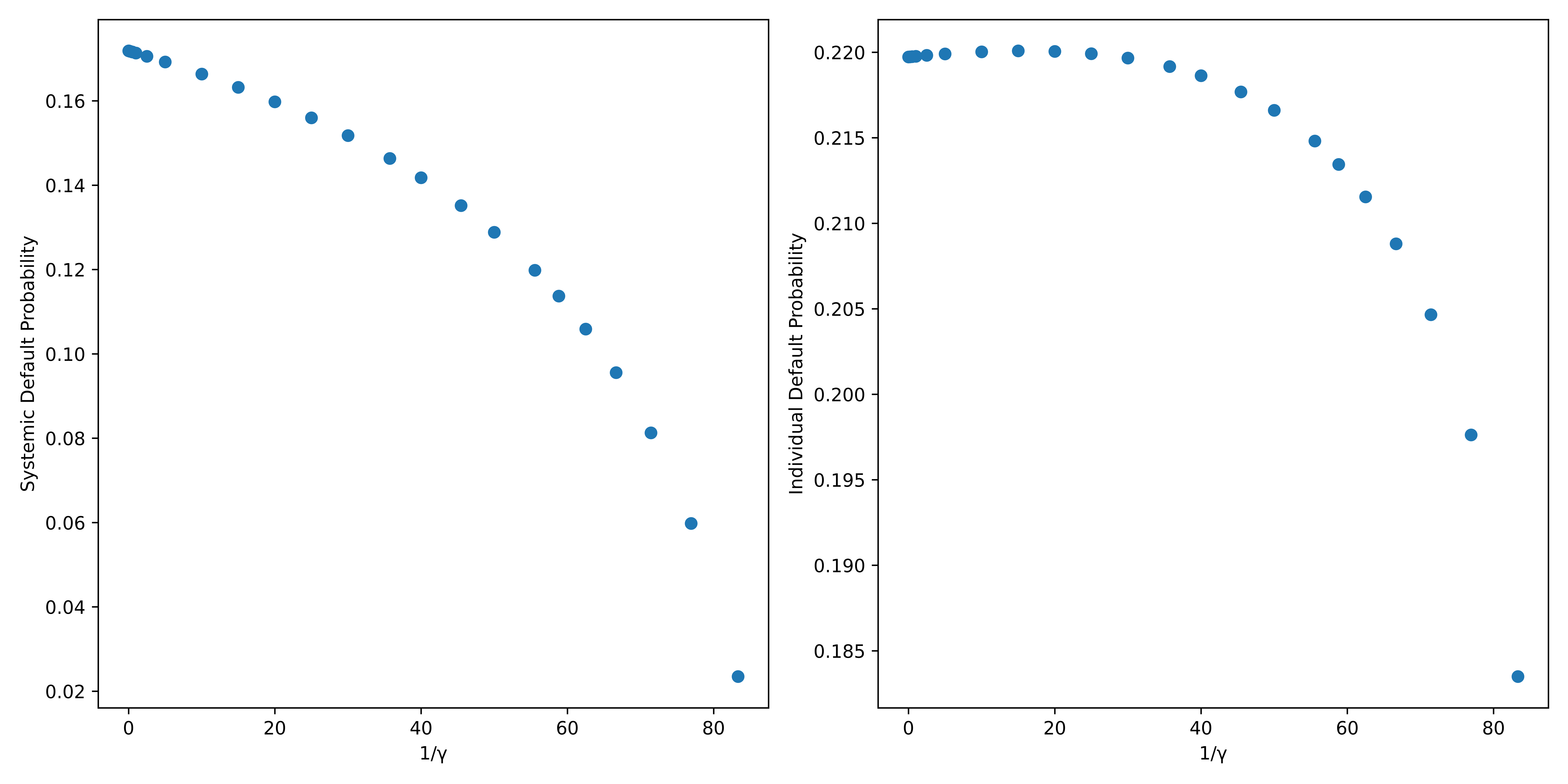}
        \end{adjustbox}
        \caption{The impact of risk-aversion degree $\tfrac{1}{\gamma}$ on individual and systemic default probabilities with the parameter values $ \sigma = 0.3,\, \rho = 0.4,\, \xi = 1,\, q = 10,\, \gamma = 0.2,\, a = 2.5,\, b(t) = 1,$ for all t, $\hat{q} = 0,$ the default threshold $\theta = -0.7$, and the time horizon $T=0.25$.}
        \label{fig: Sensitivity Test over Inversed Gamma for Unconditional Probability Default}
    \end{figure}

    \item[Case 3] \textit{Impact of Liquidity $b(t)$}
    
    We consider the case where the liquidity process, $b(t)=b$, is constant throughout time. As all banks are homogeneous, by increasing $b$, both the bank and the system enhance their liquidity positions, thereby reducing the level of risk they undertake. Conversely, reducing $b$ signifies a decrease in liquidity, introducing additional risk for both the bank and the market. From Figure \ref{fig: Sensitivity Test over b for Unconditional Systemic Probability Default} generated from the baseline scenario but with changing $b$, we observe that as $b$ increases, the probabilities of both the individual bank and the market state decrease. Furthermore, the effect of liquidity infusions on the systemic risk and individual default probability becomes more pronounced with a higher level of risk aversion (e.g. $\tfrac{1}{\gamma} = 80$) in the market.
    
    \begin{figure}[h]
        \centering
        \begin{adjustbox}{width=\textwidth}
            \includegraphics{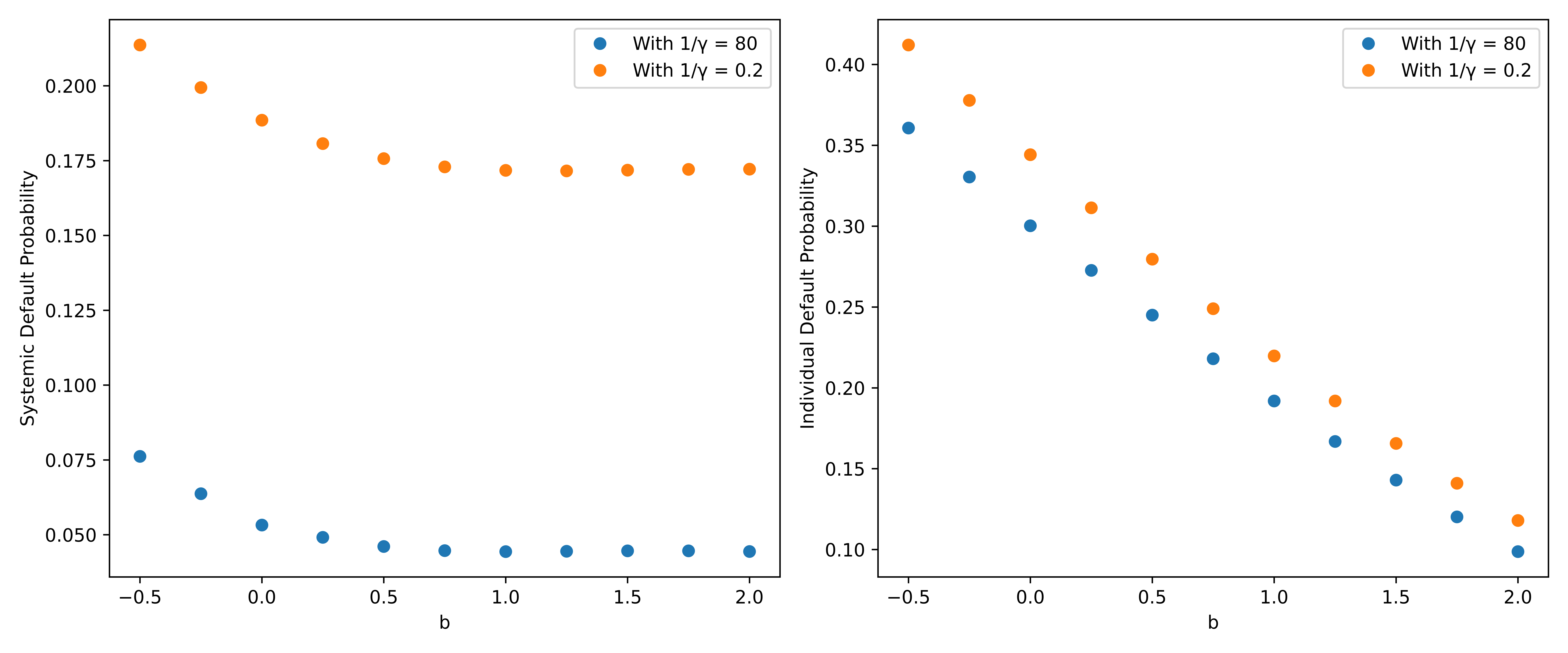}
        \end{adjustbox}
        \caption{The impact of liquidity parameter $b$ on the individual and systemic default probabilities for two degrees of risk sensitivity $\tfrac{1}{\gamma} = 0.2,\, 80$, with the parameter values $ \sigma = 0.3,\, \rho = 0.4,\, \xi = 1,\, q = 10,\, a = 2.5,\, b(t) = 1,$ for all t, $\hat{q} = 0,$ the default threshold $\theta = -0.7$, and the time horizon $T=0.25$.}
        \label{fig: Sensitivity Test over b for Unconditional Systemic Probability Default}
    \end{figure}
\end{itemize}

\subsubsection{Results and Interpretation: Conditional Default Probability under Specific Common Shocks}\label{CDF}

In this section, we analyze the conditional probability of default of a representative bank given specific trajectories of the common noise $(w^0_t)_{t\in \mfT}$ as defined in section \ref{def: default}. The baseline scenario is defined by the parameter values  $\bar{x}_0 = 0, \sigma = 1, \rho = 0.5, \xi = 1, q = 1, \gamma = 1, a = 1, b(t) = 1,$ for all t$, \hat{q} = 1$, the default threshold $\theta = -0.7$ and the time horizon $T=0.25$. We consider two trajectories for the common shock, respectively, denoted by $(\mathcal{P}^1)_{t\in \mfT}$ and $(\mathcal{P}^2)_{t\in \mfT}$. Under trajectory $\mathcal{P}^2$ the market state experiences a larger number of negative shocks compared to $\mathcal{P}^1$.

The equilibrium market state under trajectories $\mathcal{P}^1$ and $\mathcal{P}^2$ is depicted in Figure \ref{fig: Market State over Time}. From \eqref{last_step_CXFP}, the time evolution of the conditional density function of the bank $p(x^i,t | \mathcal{F}_t^0)$\, within the survival set $(\theta,\infty)$ is illustrated in  Figure \ref{fig: Bank PDF over Time under First Trajectory}. In other words, the figure depicts the conditional probability density of the bank that has not defaulted up to time $t$.
% The respective conditional probability of default given in \eqref{CPD_def}
% of a representative bank over time is illustrated in Figure \ref{fig: Bank PDF over Time under First Filtration}. 
We observe that as time goes by, the respective cumulative distribution function decreases, indicating an increase in the conditional probability of default.
% the value of probability density function increases for values closer to the threshold. 
This observation is further demonstrated in Table \ref{table: Probability default over time} where we present the associated conditional probability of individual default under trajectory $\mathcal{P}^1$ over time.
% From the Figure and Table \dena{numbers for the table and figure?}\dena{first explain what information this table presents}, we can see the probability of the conditional default\dena{! prob of cond default} for the bank over time increases\dena{where? ALL THE THINGS ARE DOWN. LATEX CONFIGURATION WIERD. IF YOU WAN TO FIND ST, JUST GO DOWN. It can be up after appendix}.\dena{?!}
We observe that for the baseline setting, the conditional probability of individual default escalates over the course of time. Furthermore, in Figure \ref{fig: Market State over Time}, we observe that a critical event happens around $t \in [0.05, 0.1]$, leading to the market state being closer to the default threshold. As the bank aims to track the market state, this negative impact is also translated into the bank's conditional probability of default. This event is demonstrated in Table \ref{table: Probability default over time}, where the conditional probability of the bank's default increases sharply around the same time.
\begin{figure}[h]
    \centering
    \begin{adjustbox}{width=0.8\textwidth}
        \includegraphics{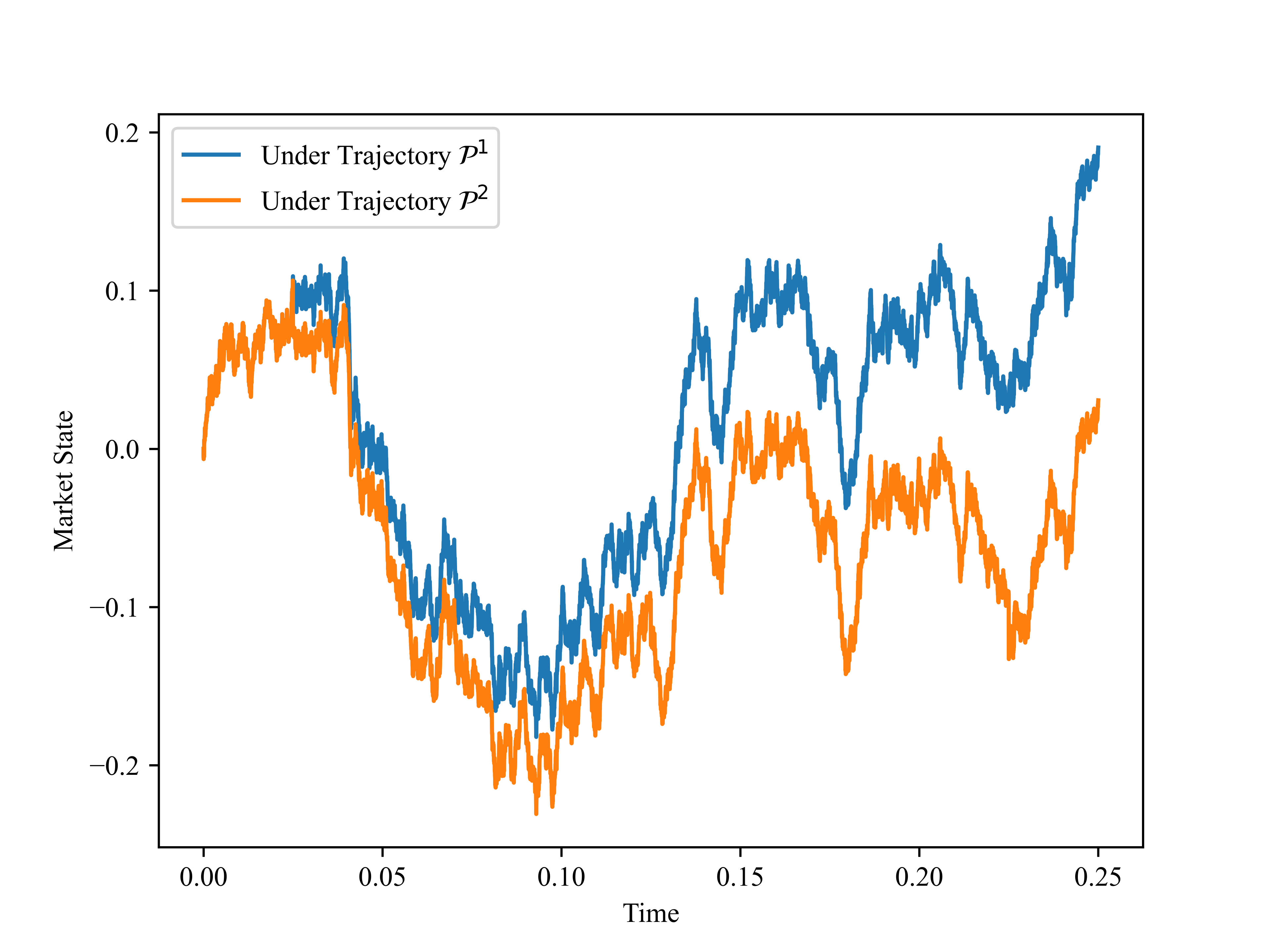}
    \end{adjustbox}
    \caption{Market state over time under the trajectories $\mathcal{P}^1$ and $\mathcal{P}^2$ described in Section \ref{CDF} with parameter values $\bar{x}_0 = 0, \sigma = 1, \rho = 0.5, \xi = 1, q = 1, \gamma = 1, a = 1, b(t) = 1,$ for all t$, \hat{q} = 1$, the default threshold $\theta = -0.7$.}
    \label{fig: Market State over Time}
\end{figure}
\begin{figure}[h]
    \centering
    \begin{adjustbox}{width=0.8\textwidth}
        \includegraphics{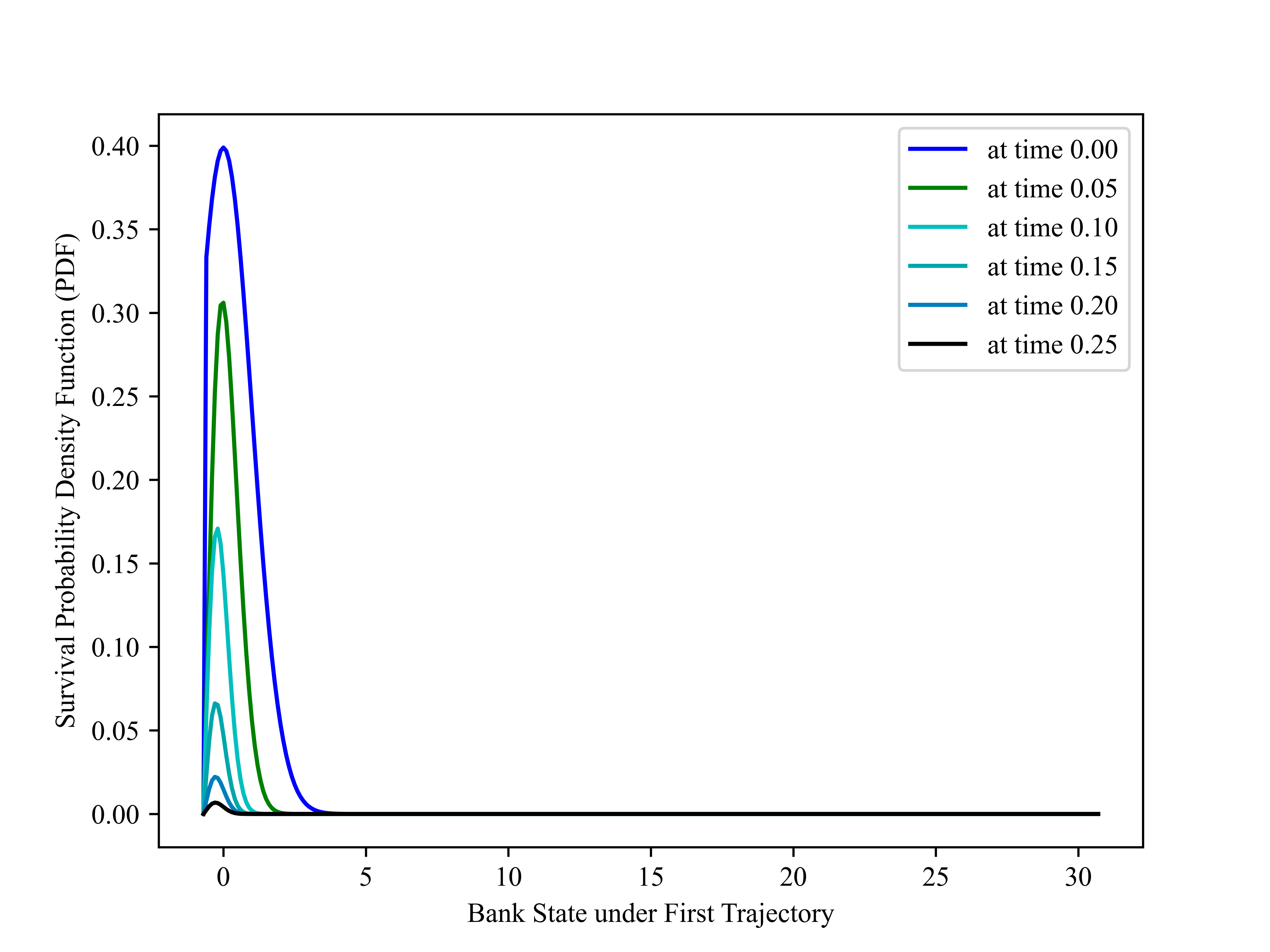}
    \end{adjustbox}
    \caption{Bank survival probability distribution over time under the trajectory $\mathcal{P}^1$ described in Section \ref{CDF} with parameter values $\bar{x}_0 = 0, \sigma = 1, \rho = 0.5, \xi = 1, q = 1, \gamma = 1, a = 1, b(t) = 1,$ for all t$, \hat{q} = 1$, the default threshold $\theta = -0.7$.}
    \label{fig: Bank PDF over Time under First Trajectory}
\end{figure}

% \begin{itemize}
% \item Impact of Adverse Economic Conditions

Consider the economic environment under the common shock $\mathcal{P}^2$ characterized by a greater magnitude of negative shocks at certain times, for which the market state is depicted in \ref{fig: Market State over Time}. We observe that the market state under $\mathcal{P}^2$ moves more closely to the default threshold compared to $\mathcal{P}^1$, capturing the amplified negative shocks in the market. From \eqref{last_step_CXFP}, the respective time evolution of the conditional density function of the bank $p(x^i,t | \mathcal{F}_t^0)$ within the survival set $(\theta, \infty)$ is presented in Figure \ref{fig: Bank PDF over Time under Second Trajectory}. According to Table \ref{table: Probability default over time}, we observe that as the bank is experiencing more adversity under $\mathcal{P}^2$, the probability of default increases compared to $\mathcal{P}^1$. We also remark that in both cases, from Figure \ref{fig: Market State over Time}, the market has not defaulted.
% \begin{figure}[h]
%     \centering
%     \begin{adjustbox}{width=0.8\textwidth}
%         \includegraphics{XMarket State over Time under Second Path}
%     \end{adjustbox}
%     \caption{Market State over Time under First Filtration}
%     \label{fig: Market State over Time under Second Path}
% \end{figure}

\begin{figure}[h]
    \centering
    \begin{adjustbox}{width=0.8\textwidth}
        \includegraphics{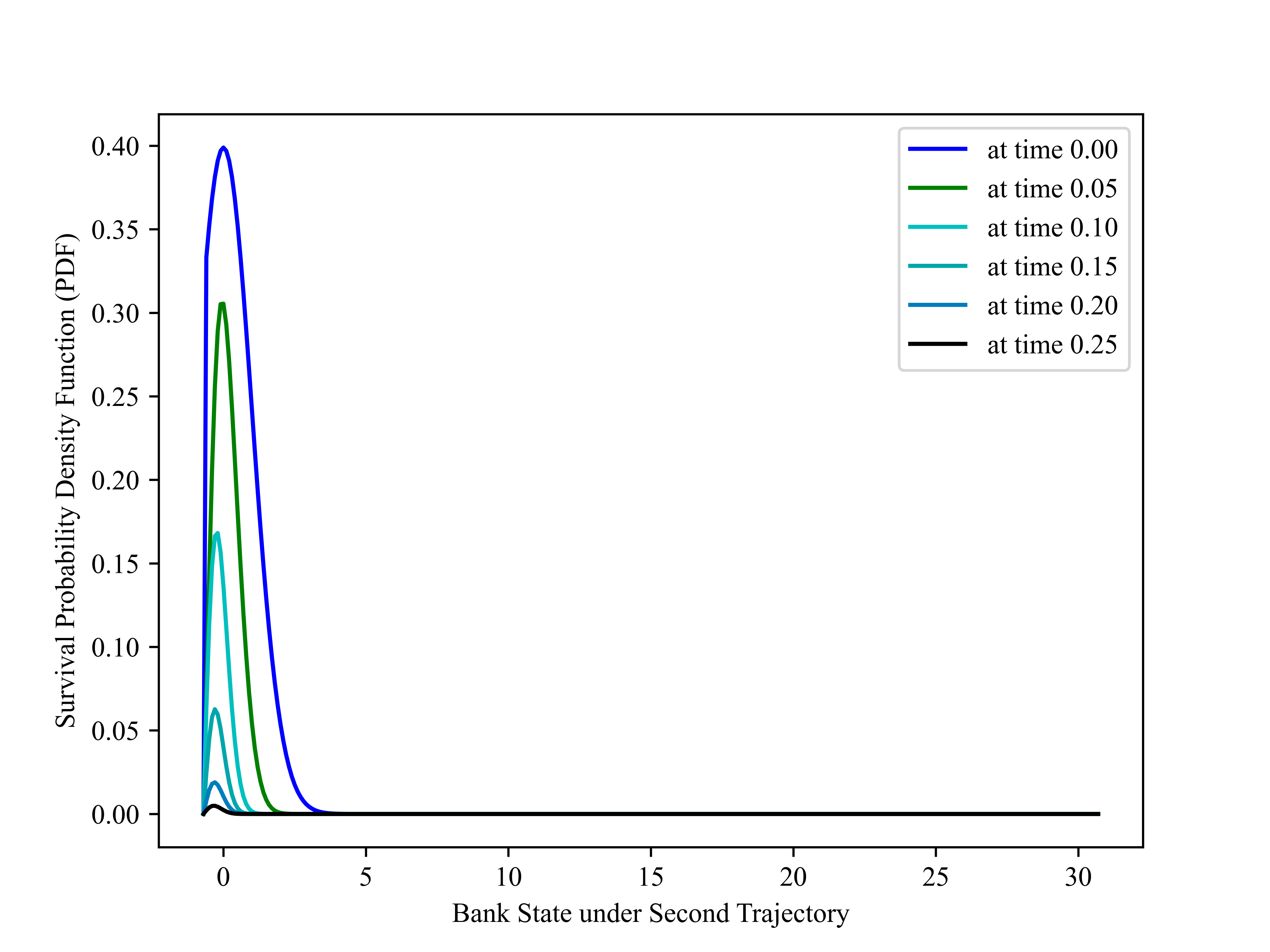}
    \end{adjustbox}
    \caption{Bank survival probability distribution over time under the trajectory $\mathcal{P}^2$ described in Section \ref{CDF} with parameter values $\bar{x}_0 = 0, \sigma = 1, \rho = 0.5, \xi = 1, q = 1, \gamma = 1, a = 1, b(t) = 1,$ for all t$, \hat{q} = 1$, the default threshold $\theta = -0.7$.}
    \label{fig: Bank PDF over Time under Second Trajectory}
\end{figure}

\begin{table}[h]
     \centering
     \begin{adjustbox}{width=0.7\textwidth}
         \begin{tabular}{|>{\centering\arraybackslash}p{1cm}|c|c|}
            \hline
            Time     & \begin{tabular}{@{}c@{}} Conditional Probability of \\ Individual Default  under $\mathcal{P}^1$\end{tabular}  & \begin{tabular}{@{}c@{}}Conditional Probability of \\ Individual Default  under $\mathcal{P}^2$\end{tabular} \\
            \hline
            0& 0.2578 & 0.2578 \\
            0.05&  0.6599&  0.6610\\
            0.1&  0.8634& 0.8675\\
            0.15& 0.9545& 0.9588\\
            0.2& 0.9854 & 0.9883\\
            0.25& 0.9957& 0.9971\\
            \hline
         \end{tabular}
     \end{adjustbox}
     \caption{Probability of individual default over time with parameter values $\bar{x}_0 = 0, \sigma = 1, \rho = 0.5, \xi = 1, q = 1, \gamma = 1, a = 1, b(t) = 1,$ for all t$, \hat{q} = 1$, the default threshold $\theta = -0.7$ subject to the trajectories $\mathcal{P}^1$ and $\mathcal{P}^2$ described in Section \ref{CDF}.} 
     \label{table: Probability default over time}
\end{table}

% TO BE EXPORT TO SCHOOL TEMPLATE - BEGIN4 READY TO EXPORT
\section{Conclusion}
This paper delves into the exploration of LQG risk-sensitive MFGs where agents are influenced by a common noise in their dynamics and wish to minimize an exponential cost functional. We focus on a scenario where the number of agents approaches infinity. The optimal strategies of agents, leading to a Nash equilibrium for the system, admit a linear feedback representation in terms of the state and the mean field. Moreover, risk sensitivity degree, the covariance of the common shock and the covariance of the idiosyncratic shock explicitly affect the coefficients of the optimal strategy. 

Applying this framework, we extend our investigation to an interbank transaction context. Our study encompasses the analysis of individual and market default scenarios across various parameter settings. Furthermore, an examination of individual default is conducted under specific trajectories of the common market noise. Our investigation reveals insightful outcomes in the context of interbank transactions, where agents, in this case banks, exhibit homogeneity and correlation, as specified in Section \ref{sec: Application}. We observe that high correlation among these banks contributes to diminished probability of individual default due to the benefits of risk-sharing yet heightened market default probability as the default of one bank leads to a higher chance of the market default. Additionally, banks with lower risk aversion are prone to experience an elevated individual default risk. As a consequence in this homogeneous setting, the systemic risk increases as well. However, higher degrees of risk-aversion shared by all banks, improve the systemic risk. Moreover, introducing liquidity infusions within the institutions helps to mitigate systemic and individual default risks, a factor that becomes more influential in the presence of higher levels of risk aversion. Finally, upon investigating the conditional probability of an individual bank default under the influence of specific economic shocks, greater negative shocks exerted upon banks correspond to elevated probabilities of default. 

The significance of this research lies in its contribution to comprehending risk-sensitive decision-making amid the presence of common noise. Through our analysis, we provide insights that enhance the understanding of how agents' optimal strategies adapt to a dynamic environment characterized by risk aversion and interconnectedness. %This work sheds light on the intricate relationship between risk, decision strategies, and systemic stability in the financial landscape.

Future studies can build upon the presented LQG risk-sensitive MFG model with common noise, considering its limitations. Due to the variational approach taken for the analysis of the optimal control, the considered cost functional needs to be convex with respect to its variables. This characteristic is ensured by the imposed assumptions (i.e. Assumption \ref{ass:ConvCond} and the nonnegativity of $1/\gamma_k$ implying the risk aversion of the agents). Further research could be valuable in exploring conditions when agents exhibit risk-seeking behavior, that is, when $1/\gamma_k$ is negative. Additionally, it could be intriguing to investigate the existence of an approximate Nash ($\epsilon$-Nash) equilibrium in the finite-population system in subsequent research. 

\bibliographystyle{apalike}
\bibliography{main.bib}

\end{document}